\numberwithin{equation}{section}
\newtheorem{theorem}{Theorem}
\numberwithin{theorem}{section}
\newtheorem{proposition}[theorem]{Proposition}
\newtheorem{lemma}[theorem]{Lemma}
\newtheorem{corollary}[theorem]{Corollary}
\newtheorem{conjecture}[theorem]{Conjecture}
\theoremstyle{definition}
\newtheorem{rmk}[theorem]{Remark}
\newenvironment{remark}[1][]{\begin{rmk}[#1] \pushQED{\qed}}{\popQED \end{rmk}}
\newtheorem{eg}[theorem]{Example}
\newenvironment{example}[1][]{\begin{eg}[#1] \pushQED{\qed}}{\popQED \end{eg}}
\newtheorem{defn}[theorem]{Definition}
\newcommand{\rB}{\mathrm{B}}
\newcommand{\rC}{\mathrm{C}}
\newcommand{\rD}{\mathrm{D}}
\newcommand{\cE}{\mathcal{E}}
\newcommand{\rE}{\mathrm{E}}
\newcommand{\bF}{\mathbf{F}}
\newcommand{\rG}{\mathrm{G}}
\newcommand{\rH}{\mathrm{H}}
\newcommand{\bL}{\mathbf{L}}
\newcommand{\cL}{\mathcal{L}}
\newcommand{\cM}{\mathcal{M}}
\newcommand{\bO}{\mathbf{O}}
\newcommand{\cO}{\mathcal{O}}
\newcommand{\bP}{\mathbf{P}}
\newcommand{\cR}{\mathcal{R}}
\newcommand{\rR}{\mathrm{R}}
\newcommand{\bS}{\mathbf{S}}
\newcommand{\cS}{\mathcal{S}}
\newcommand{\cT}{\mathcal{T}}
\newcommand{\cV}{\mathcal{V}}
\newcommand{\bZ}{\mathbf{Z}}
\newcommand{\fu}{\mathfrak{u}}
\renewcommand{\phi}{\varphi}
\renewcommand{\emptyset}{\varnothing}
\newcommand{\eps}{\varepsilon}
\renewcommand{\tilde}[1]{\widetilde{#1}}
\newcommand{\arxiv}[1]{\href{http://arxiv.org/abs/#1}{{\tt arXiv:#1}}}
\def\Ddots{\mathinner{\mkern1mu\raise\p@
\vbox{\kern7\p@\hbox{.}}\mkern2mu
\raise4\p@\hbox{.}\mkern2mu\raise7\p@\hbox{.}\mkern1mu}}
\DeclareMathOperator{\codim}{codim}
\renewcommand{\hom}{\operatorname{Hom}}
\DeclareMathOperator{\rank}{rank}
\DeclareMathOperator{\Pf}{Pf}
\DeclareMathOperator{\Sym}{Sym}
\DeclareMathOperator{\Tor}{Tor}
\DeclareMathOperator{\Spec}{Spec}
\DeclareMathOperator{\Pic}{Pic}
\DeclareMathOperator{\gr}{gr}
\newcommand{\GL}{\mathbf{GL}}
\newcommand{\Sp}{\mathbf{Sp}}
\newcommand{\SO}{\mathbf{SO}}
\newcommand{\Gr}{\mathbf{Gr}}
\newcommand{\Fl}{\mathbf{Fl}}
\newcommand{\IGr}{\mathbf{IGr}}
\newcommand{\IFl}{\mathbf{IFl}}
\newcommand{\OGr}{\mathbf{OGr}}
\newcommand{\Spin}{\mathbf{Spin}}
\title{Littlewood complexes and analogues of determinantal varieties}
\author{Steven V Sam}
\address{Department of Mathematics, University of California, Berkeley, CA}
\email{svs@math.berkeley.edu}
\author{Jerzy Weyman}
\address{Department of Mathematics, University of Connecticut, Storrs, CT}
\email{jerzy.weyman@uconn.edu}
\date{May 22, 2014}
\subjclass[2010]{05E05, 
14M12, 
20G05
}
\begin{document}

\maketitle

\begin{abstract}
One interesting combinatorial feature of classical determinantal varieties is that the character of their coordinate rings give a natural truncation of the Cauchy identity in the theory of symmetric functions. Natural generalizations of these varieties exist and have been studied for the other classical groups. In this paper we develop the relevant properties from scratch. By studying the isotypic decomposition of their minimal free resolutions one can recover classical identities due to Littlewood for expressing an irreducible character of a classical group in terms of Schur functions. We propose generalizations for the exceptional groups. In type $\rG_2$, we completely analyze the variety and its minimal free resolution and get an analogue of Littlewood's identities. We have partial results for the other cases. In particular, these varieties are always normal with rational singularities.
\end{abstract}

\tableofcontents

\section{Introduction.}

D.~E. Littlewood investigated the construction of irreducible representations of the orthogonal, symplectic and symmetric groups by means of traceless tensors \cite{littlewood}. The combinatorial aspects of the first two cases of this construction have been well understood in the subsequent work of Koike--Terada \cite{koike} and Sundaram \cite{sundaram}. More recently, deeper results were obtained in joint work of the authors with Snowden \cite{ssw}. 

The purpose of this paper is to generalize the results of Littlewood
for the symplectic and orthogonal group to the exceptional Lie
groups. Our approach is geometric and allows us to treat all of the cases in a uniform way. We define and study analogues of determinantal varieties, which we call {\bf Littlewood varieties}, for the classical and exceptional groups. (See \cite{dEP} for a combinatorial treatment of ``type A'' determinantal varieties.) The overall idea is as follows. Simple groups (except for the group of type ${\rm E}_8$) have small representations, i.e., non-trivial
irreducible representations having dimension smaller than that of the
adjoint representation. For the classical groups these are the vector
representations, and for groups of type ${\rm G}_2$, ${\rm F}_4$,
${\rm E}_6$, and ${\rm E}_7$ these are the fundamental representations
$V(\omega_1)$, $V(\omega_4)$, $V(\omega_6)$, and $V(\omega_7)$,
respectively, in Bourbaki notation. (For $\rE_8$, we call its adjoint representation the small representation.) For a small representation $V$ it
is natural to ask whether other irreducible representations can be
constructed in a natural way from tensor powers of $V$. It is
well-known that this can be done for the classical groups (via Young's
construction of Schur functors and Weyl's construction via traceless tensors for classical groups).

There is also a related question: Can one write the character of an
irreducible representation as an alternating sum of characters of
Schur functors applied to $V$? An even more precise result would be to
construct an acyclic complex whose terms are direct sums of Schur
functors on $V$ and whose cokernel is the irreducible representation
$V_\lambda$ of highest weight $\lambda$, or just to construct a
presentation of $V_\lambda$ by a map of Schur functors on $V$. 

For classical groups such character formulas for irreducible
representations $V_\lambda$ were given by Littlewood
\cite{littlewood}.  In \cite[Chapter 6, exercises]{weyman} a procedure
for constructing the corresponding complex $C(\lambda)_\bullet$ is
given. They are obtained as the isotypic components of the Koszul
complex giving the syzygies of a certain complete intersection $Y$. We consider these for general Littlewood varieties, and we call the complexes $C(\lambda)_\bullet$ {\bf Littlewood complexes}.

In the present paper we give the analogues of these constructions for
the exceptional groups. Carr and Garibaldi \cite{carr} gave interpretations of the homogeneous spaces for exceptional groups in terms of flags in a small
representation $V$ (for the type ${\rm G}_2$ it was also done by
Anderson \cite{anderson}). Before that, some fundamental work on the
small representation of ${\rm E}_6$ was done by Aschbacher
\cite{aschbacher}. For each subspace $W_i$ in these flags we proceed
as follows. Let $\cR_i$ be the corresponding tautological subbundle on
the appropriate homogeneous space $G/P$. Consider several copies of
$V$, i.e., the representation $\hom_K (E, V)$ for some vector space $E$. We construct a variety $Y\subset \hom_K(E, V)$ which is desingularized by the vector bundle $\hom(E, \cR_i)$ over $G/P$. We use the geometric technique for calculating syzygies \cite[Chapter 5]{weyman} to draw the consequences. We work most of the time under the assumption $\dim E=i=\dim W_i$. One
could do the same in the case $\dim E > \dim W_i$ by adding the
rank conditions to the equations defining $Y$.

We prove that the varieties $Y$ are normal with rational singularities. 
In small cases we construct the syzygies of the coordinate rings of $Y$. In some cases (we call them the spherical cases) the coordinate rings
of the subvarieties $Y\subset \hom_K (E, V)$ are multiplicity-free.
Then they have the decomposition
\begin{align} \label{eqn:cauchy}
  K[Y]=\bigoplus_\lambda \bS_\lambda E \otimes V_{[\lambda]},
\end{align}
where $V_{[\lambda]}$ is an irreducible representation of the group
$G$ with highest weight $[\lambda]$. The notation $[\lambda]$ is explained in each individual section, but for now we note that it has the property 
$[\lambda] + [\mu] = [\lambda + \mu]$. As $\lambda$ varies through all
the partitions with $\ell(\lambda) \le \dim E$, the weight $[\lambda]$
varies through some sublattice $\Lambda_G(V)^+\subseteq \Lambda_G^+$
of the weight lattice of $G$. In some of the spherical cases (in particular, the case of three
copies of ${\rm F}_4$ and ${\rm E}_6$ and the case of four copies of
${\rm E}_7$) we prove that the defining ideals of the varieties $Y$
are generated by quadrics giving presentations of the representations
$V_{[\lambda]}$ by means of Schur functors on $V$.

One can view such a formula as an analogue of the Cauchy formula for
the decomposition of the coordinate ring of a determinantal variety
into the irreducible representations of a product of general linear
groups (see \cite[(6.1.5)(d)]{weyman}). For a variety $Y$ with the coordinate ring $K[Y]$ as in \eqref{eqn:cauchy} the isotypic components of its minimal free resolution (over the polynomial ring) give us the desired Littlewood complexes $C(\lambda)_\bullet$. Part of our motivation to
construct varieties $Y$ with such coordinate rings stems from the work
of the first author on saturation theorems for the classical groups
\cite{sam} (building on the work \cite{derksenweyman}) and to find analogous situations involving the exceptional groups. 

The paper is organized as follows. The sections describe the varieties
in question type by type, so we cover in succession: types ${\rm C}_n$, ${\rm B}_n$, ${\rm D}_n$, ${\rm G}_2$, ${\rm F}_4$, ${\rm E}_6$, ${\rm E}_7$, and ${\rm E}_8$. In types $\rB$ and $\rD$, these varieties do not see the spin representations. These can be accounted for by considering a ``spinor module'' which is supported on the corresponding variety, and we study this in \S\ref{sec:lwoodspin}.

\begin{remark} 
We mention that Brion and Inamdar have shown that spherical varieties are Frobenius split in sufficiently large characteristics, and hence have rational singularities \cite{brion}. It would be interesting to see if one can use Frobenius splitting techniques to obtain quadratic generation of the ideals mentioned above.
\end{remark}

\subsection*{Notation and conventions.}

We use Bourbaki's numbering for the nodes of Dynkin diagrams. This is consistent with the labeling in LiE \cite{lie}. If $\lambda = (\lambda_1, \lambda_2, \dots)$ is a partition (i.e., $\lambda_i \ge \lambda_{i+1}$ and $\lambda_i = 0$ for $i \gg 0$), then $\lambda^\dagger$ denotes the transposed partition. In symbols, $\lambda^\dagger_i = \#\{j \mid \lambda_j \le i\}$. The rank of $\lambda$, denoted $\rank(\lambda)$, is the length of the main diagonal of the associated Young diagram, i.e., $\max\{i \mid \lambda_i \ge i\}$. Finally, $|\lambda| = \sum_i \lambda_i$ and $\ell(\lambda) = \max\{r \mid \lambda_r \ne 0\}$.

\subsection*{Acknowledgements.} We thank Skip Garibaldi for making us
aware of his papers. We also thank Jason Ribeiro for providing
software to use Bott's algorithm and Witold Kra\'skiewicz and Evan
Klitzke for assistance with some of the larger computer
calculations. The software packages LiE \cite{lie} and Macaulay2
\cite{M2} were crucial to many parts of this work. 

Steven Sam was supported by an NDSEG fellowship and a Miller research fellowship. Jerzy Weyman was partially supported by NSF grant DMS-0901185.

\section{Preliminaries.}

Except for \S\ref{sec:BWB}, $K$ is an arbitrary field. We use $G$ to denote a connected split reductive group. We let $B \subset G$ denote a Borel subgroup and let $U$ be the unipotent radical of $B$. We will usually assume that $G$ is simply-connected.

\subsection{Geometric technique.} \label{sec:geomtech}

For more details, we refer to \cite[Chapter 5]{weyman}.

Let $X$ be a vector space of dimension $N$, which we identify with $\Spec(A)$ where $A = \Sym(X^*)$ is a graded ring. Let $V$ be a smooth projective variety and let $\cE = X \times V$ be a trivial vector bundle of rank $N$, together with the projection maps $q \colon \cE \to X$ and $p \colon \cE \to V$. Consider a short exact sequence of vector bundles
\[
0 \to \cS \to \cE \to \cT \to 0
\]
and let $Z$ be the total space of $\cS$. We define the variety $Y
\subset X$ to be the image $q(Z)$ with reduced subscheme structure. We
also set $\eta = \cS^*$ and $\xi = \cT^*$.

Let $\cV$ be a vector bundle on $V$. For each $i \in \bZ$, define the $A$-module 
\[
\bF(\cV)_i = \bigoplus_{j \ge 0} \rH^j(V; (\bigwedge^{i+j} \xi) \otimes \cV) \otimes_K
A(-i-j). 
\]

\begin{theorem} \label{thm:geom-tech}
There exist minimal differentials $d_i \colon \bF(\cV)_i \to \bF(\cV)_{i-1}$ of degree $0$ such that
\[
\rH_{-i}(\bF(\cV)_\bullet) = \rR^i q_* (\cO_Z \otimes p^* \cV) = \rH^i(V; \Sym(\eta) \otimes \cV).
\]
for all $i \ge 0$ and $\rH_j(\bF(\cV)_\bullet) = 0$ for all $j > 0$.
\end{theorem}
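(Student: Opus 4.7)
The plan is to use a Koszul resolution. On $\cE = X \times V$, the composition of the tautological section $\cE \to p^*\cE$, $(x,v) \mapsto (x,v,x)$, with the projection $p^*\cE \twoheadrightarrow p^*\cT$ defines a section $s$ of $p^*\cT$ whose zero locus is exactly $Z$. Since $Z$ has codimension $\rank \cT$ in $\cE$, the section $s$ is locally regular, so the Koszul complex
\[
K_\bullet \colon \quad \bigwedge^{\rank \cT} p^*\xi \to \cdots \to p^*\xi \to \cO_\cE
\]
with differential given by contraction with $s$ is a locally free resolution of $\cO_Z$ by pullbacks of bundles from $V$.

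Next, I would tensor with $p^*\cV$ and apply $\rR q_*$. Because $\cE = X \times V$ is a product with $X$ affine, the K\"unneth formula gives $\rR^j q_*(p^*\cF) = \rH^j(V;\cF) \otimes_K A$ as a free graded $A$-module for any coherent $\cF$ on $V$. Concretely, one fixes a finite affine open cover of $V$, forms the associated \v{C}ech double complex of $K_\bullet \otimes p^*\cV$, and takes global sections on $\cE$; the total complex computes $\rR q_*(\cO_Z \otimes p^*\cV)$, and its terms after collapsing the \v{C}ech direction are precisely the groups $\rH^j(V; \bigwedge^{i+j} \xi \otimes \cV) \otimes_K A$. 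Because $s$ is linear along the fibers of $\cE \to V$, the Koszul differential raises internal $A$-degree by one while dropping exterior degree by one, so the shift $A(-i-j)$ in the statement renders the induced differentials homogeneous of degree zero.

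To extract $\bF(\cV)_\bullet$ from this I would invoke the standard fact that any bounded-below complex of finitely generated free graded $A$-modules admits a minimal model, obtained by iteratively splitting off contractible summands of the form $A(-d) \xrightarrow{\sim} A(-d)$; the result has the same cohomology and the stated terms. Finally, to identify the abutment with $\rH^i(V; \Sym(\eta) \otimes \cV)$, I would factor the composition $Z \hookrightarrow \cE \xrightarrow{q} X$ through the vector bundle projection $\pi \colon Z \to V$. Since $\pi$ is affine with $\pi_*\cO_Z = \Sym(\eta)$, and $X$ is affine so that $\rR q_*$ coincides with taking global sections, the Leray spectral sequence collapses to yield $\rR^i q_*(\cO_Z \otimes p^*\cV) = \rH^i(V; \Sym(\eta) \otimes \cV)$.

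The main technical obstacle is the minimal model step: organizing the data so that the \v{C}ech double complex really produces a complex of free graded $A$-modules of the precise ranks stated, with minimal and degree-zero differentials, requires care about convergence of the spectral sequence and about tracking the graded structure through the minimization process. These foundational details are carried out in \cite[Chapter 5]{weyman}.
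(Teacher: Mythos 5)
Your proposal reconstructs the standard argument of Kempf's geometric technique as presented in \cite[Chapter 5]{weyman}, which is exactly what the paper cites for this theorem (the paper offers no independent proof, only the reference). The main ingredients — the Koszul resolution of $\cO_Z$ from the section $s$ of $p^*\cT$, the K\"unneth/\v{C}ech computation of $\rR q_*$ over the affine base, the linearity of $s$ along the $X$-direction making the Koszul differential internal-degree $+1$, the minimal-model extraction, and the identification of the abutment via $\pi_* \cO_Z = \Sym(\eta)$ and affineness of $X$ — are all correctly identified and are the same ones used in the cited source. You are also right to flag the minimization step as the real technical content: the \v{C}ech double complex has terms that are not finitely generated over $A$, so one cannot literally split off contractible $A(-d) \xrightarrow{\sim} A(-d)$ summands from it; one must first pass to a quasi-isomorphic bounded complex of finitely generated free graded modules and then argue that minimization yields precisely the stated groups, which is where the degree bookkeeping (generators of $\bF_i$ sitting in internal degree $i+j$) does the work. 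That detail is handled carefully in Weyman's text, so deferring it there is appropriate.
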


We will particularly be interested in the case when the higher direct images ($i>0$) vanish, and $q$ is birational. In this case, when $\cV = \cO_V$, we can identify $q_*\cO_Z$ with the normalization of $\cO_Y$ and $\bF_\bullet$ gives an $A$-linear minimal free resolution of $q_*\cO_Z$.

Finally, we state the criterion from \cite[Corollary 5.1.5]{weyman} for the module 
\[
M(\cV) = \rH^0(V; \Sym(\eta) \otimes \cV)
\]
to be Cohen--Macaulay. Let $\omega_V$ be the canonical bundle of $V$ and define 
\[
\cV^\vee = \omega_V \otimes \det \xi^* \otimes \cV^*.
\]

\begin{theorem} \label{thm:CM-geom}
$M(\cV)$ is Cohen--Macaulay if $\rH^i(V; \Sym(\eta) \otimes \cV) = \rH^i(V; \Sym(\eta) \otimes \cV^\vee) = 0$ for all $i>0$.
\end{theorem}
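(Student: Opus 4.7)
The plan is to use Theorem~\ref{thm:geom-tech} to realize $M(\cV)$ as the zeroth homology of the complex $\bF(\cV)_\bullet$, and then characterize Cohen--Macaulayness via the vanishing of $\ext^i_A(M(\cV),A)$ in all but one degree, using the dual hypothesis on $\cV^\vee$.

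First I would invoke Theorem~\ref{thm:geom-tech}: the hypothesis $\rH^i(V;\Sym(\eta)\otimes\cV)=0$ for $i>0$ says that $\rH_{-i}(\bF(\cV)_\bullet)=0$ for all $i>0$, so $\bF(\cV)_\bullet$ is genuinely a minimal graded free resolution of $M(\cV)$ over $A$. Since $\bigwedge^{k}\xi=0$ for $k>r:=\rank\xi$, the terms $\bF(\cV)_i$ vanish for $i>r$, so $\pdim_A M(\cV)\le r$. By Auslander--Buchsbaum, $M(\cV)$ is Cohen--Macaulay if and only if $\ext^i_A(M(\cV),A)$ vanishes for all $i\ne \codim\supp M(\cV)$, equivalently, is concentrated in a single cohomological degree.

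Next I would compute $\ext^i_A(M(\cV),A)$ by dualizing $\bF(\cV)_\bullet$ term by term. Writing $d=\dim V$, Serre duality on the smooth projective variety $V$ gives
\[
\rH^j(V;\tbigwedge^{i+j}\xi\otimes\cV)^*\cong \rH^{d-j}(V;\omega_V\otimes\tbigwedge^{i+j}\xi^*\otimes\cV^*),
\]
and the identification $\bigwedge^{i+j}\xi^*\cong \det\xi^*\otimes\bigwedge^{r-i-j}\xi$ together with the definition $\cV^\vee=\omega_V\otimes\det\xi^*\otimes\cV^*$ rewrites the right-hand side as $\rH^{d-j}(V;\bigwedge^{r-i-j}\xi\otimes\cV^\vee)$. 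Tracking the graded twists, one checks that after reindexing $i\mapsto r-d-i$ and $j\mapsto d-j$, the complex $\hom_A(\bF(\cV)_\bullet,A)$ is isomorphic to $\bF(\cV^\vee)_\bullet$ up to a homological shift by $r-d$ and an overall twist $(r)$. This is the main technical step; the only delicate point is bookkeeping the two indexing shifts and the degree twist correctly.

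Once that duality is in hand, the second hypothesis $\rH^i(V;\Sym(\eta)\otimes\cV^\vee)=0$ for $i>0$ says, by Theorem~\ref{thm:geom-tech} applied to $\cV^\vee$, that $\bF(\cV^\vee)_\bullet$ has cohomology concentrated in homological degree $0$. Translated back through the duality above, this means $\ext^i_A(M(\cV),A)$ is concentrated in a single cohomological degree, which must therefore equal $\codim\supp M(\cV)$. Hence $M(\cV)$ is Cohen--Macaulay, completing the proof. The main obstacle, as noted, is the index-shuffling in the Serre-duality identification of the two complexes; the rest is formal.
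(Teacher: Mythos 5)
The paper does not prove this theorem; it simply cites \cite[Corollary 5.1.5]{weyman}, so there is no ``paper's own proof'' to compare against. Your argument is, however, exactly the strategy used there: the first vanishing hypothesis makes $\bF(\cV)_\bullet$ a minimal free resolution of $M(\cV)$ (bounded, since $\bigwedge^{>r}\xi=0$ and $\rH^{>\dim V}=0$), Serre duality on $V$ together with $\bigwedge^{k}\xi^*\cong\det\xi^*\otimes\bigwedge^{r-k}\xi$ identifies the $A$-dual terms $\bF(\cV)_i^*$ with $\bF(\cV^\vee)_{r-d-i}(r)$, and the second vanishing hypothesis then forces $\ext^i_A(M(\cV),A)$ to be concentrated in the single degree $r-d$, which gives Cohen--Macaulayness. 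Your index/twist bookkeeping is correct.

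The one place where you understate the difficulty is the claim that $\hom_A(\bF(\cV)_\bullet,A)\cong\bF(\cV^\vee)_\bullet[\,\cdot\,](r)$ \emph{as complexes}, not merely term by term. The differentials $d_i$ of Theorem~\ref{thm:geom-tech} are produced by a spectral-sequence (or filtration) argument and are not written down explicitly, so Serre duality on each cohomology group does not by itself show that the transposed differentials of $\bF(\cV)_\bullet$ agree with the differentials of $\bF(\cV^\vee)_\bullet$. This is the real content of the step, and it is handled in Weyman's book (\cite[Theorem 5.1.4]{weyman}) by applying Grothendieck duality to the proper map $q\colon Z\to X$ at the level of the twisted Koszul complex: $q^!\cO_X\cong p^*(\omega_V\otimes\det\xi^*)[d-r]$, and $\mathbf{R}\cHom_{\cO_Z}(p^*\cV,q^!\cO_X)\cong p^*\cV^\vee[d-r]$, giving $\mathbf{R}\hom_A(\rR q_* p^*\cV,A)\cong \rR q_* p^*\cV^\vee[d-r]$ as objects in the derived category, from which the identification of the minimal complexes follows. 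So your outline is right, but ``the rest is formal'' overstates the case slightly: one should invoke duality at the derived/Koszul level rather than only on the cohomology groups appearing in the terms.
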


\subsection{Schur modules and good filtrations.} \label{sec:schurmodules}

The Picard group of the flag variety $G/B$ is identified with the weight lattice $\Lambda_G$ of $G$. Given a weight $\lambda$, we let $\cL_\lambda$ denote the corresponding line bundle, and we index this so that if $\lambda$ is dominant, then $\cL_\lambda$ is generated by its global sections. Furthermore, we have that $\cL_\lambda \otimes \cL_\mu = \cL_{\lambda + \mu}$.

Given a dominant weight $\lambda$, we know by Kempf's vanishing
theorem \cite[Theorem 3.1.1(a)]{frobenius} that the higher cohomology
of $\cL_\lambda$ vanishes. The space of sections 
\[
V_\lambda := \rH^0(G/B; \cL_\lambda)
\]
is a highest weight module which we call a {\bf Schur module}. A finite-dimensional representation $W$ has a {\bf good filtration} if $W$ has a filtration whose associated graded $\gr(W)$ is a direct sum of Schur modules. We will use the notation
\[
W \approx \gr(W)
\]
to denote this. A similar definition is made for graded representations with finite-dimensional components. This condition is automatic when $K$ has characteristic $0$ since then all finite-dimensional representations of $G$ are semisimple and the $V_\lambda$ exhaust all of the simple representations. Given two vector spaces $E$ and $F$, the symmetric algebra $\Sym(E \otimes F)$ has a good filtration with respect to the action of $\GL(E) \times \GL(F)$ \cite[Theorem 2.3.3(a)]{weyman}:
\begin{align} \label{eqn:cauchy-filt}
\Sym(E \otimes F) \approx \bigoplus_\lambda \bS_\lambda(E) \otimes \bS_\lambda(F).
\end{align}

For a partition $\lambda$, the notation $\bS_\lambda E$ denotes the Schur functor applied to $E$, see \cite[Chapter 2]{weyman}. Note that $\bS_\lambda E = 0$ if $\ell(\lambda)> \dim E$. But we change the indexing so that our
$\bS_\lambda E$ is denoted $\bL_{\lambda'} E$ in (loc. cit.). Given an inclusion of partitions $\mu \subseteq \lambda$ (i.e., $\mu_i \le \lambda_i$ for all $i$) one can also define the skew Schur functors $\bS_{\lambda/\mu} E$.

\subsection{Borel--Weil--Bott theorem.} \label{sec:BWB}

For this section, we assume that $K$ is of characteristic $0$. See \cite[Chapter 4]{weyman} or \cite{demazure} for details. We also point the reader to \cite[\S 12.1]{humphreys} as a reference for coordinate systems for root systems. In particular, for the classical types ABCD, the $\eps_i$ basis is what we call ``partition notation''.

We have a shifted Weyl group action on the set $\Lambda_G$ of integral weights given by
\[
w\bullet\lambda:=w(\lambda+\rho)-\rho
\]
where $\rho$ is half of the sum of positive roots. We use $V_\lambda$ to denote the irreducible rational $G$-module with highest weight $\lambda$. The following theorem calculates the cohomology of line bundles on $G/B$.

\begin{theorem}[Bott] Let $\cL_\lambda$ be a line bundle on $G/B$ corresponding to a weight $\lambda$. We have two mutually exclusive possibilities.
\begin{compactenum}[\rm 1.]
\item There exists a non-trivial element $w\in W$ such that $w\bullet\lambda=\lambda$. Then all cohomology groups of $\cL_\lambda$ are zero.
\item There exists a unique $w\in W$ such that $\mu:=w\bullet\lambda$ is dominant.
Then the only non-zero cohomology group of $\cL_\lambda$ is
\[
\rH^{\ell(w)}(G/B, \cL_\lambda)=V_\mu
\]
where $\ell(w)$ denotes the length of $w$.
\end{compactenum}
\end{theorem}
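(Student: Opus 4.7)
The plan is to prove this by induction using the $\bP^1$-fibrations on $G/B$. For each simple root $\alpha_i$, let $P_i \supset B$ denote the corresponding minimal parabolic, and consider the projection $\pi_i \colon G/B \to G/P_i$, which is a Zariski-locally-trivial $\bP^1$-bundle with fiber $P_i/B \cong \bP^1$. The central observation is that the restriction of $\cL_\lambda$ to any such fiber is $\cO(a)$ on $\bP^1$, where $a := \langle \lambda, \alpha_i^\vee \rangle$.

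The first step is to compute the higher direct images $\rR^j(\pi_i)_* \cL_\lambda$ via cohomology of the fibers. There are three cases: if $a \ge 0$, then $(\pi_i)_* \cL_\lambda$ is locally free and the higher $\rR^j$ vanish; if $a = -1$, then all $\rR^j(\pi_i)_* \cL_\lambda$ vanish since $\rH^*(\bP^1, \cO(-1)) = 0$; and if $a \le -2$, then only $\rR^1$ is nonzero, and using Serre duality on the fibers together with the equivalence between $B$-modules and $G$-equivariant bundles on $G/B$, one identifies it with $(\pi_i)_* \cL_{s_i \bullet \lambda}$. In each case only a single direct image is nonzero, so the Leray spectral sequence degenerates and gives $\rH^j(G/B; \cL_\lambda) \cong \rH^{j-1}(G/B; \cL_{s_i \bullet \lambda})$ when $a \le -2$, and $\rH^*(G/B; \cL_\lambda) = 0$ when $a = -1$.

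To conclude, I would induct on the statistic $\sum_{\alpha > 0} \max(0, -\langle \lambda+\rho, \alpha^\vee\rangle)$, which strictly decreases under $\lambda \mapsto s_i \bullet \lambda$ whenever $\langle \lambda, \alpha_i^\vee\rangle \le -2$. The base case is $\lambda$ dominant: Kempf's vanishing theorem together with the definition of the Schur module $V_\lambda$ yield $\rH^0 = V_\lambda$ as the only nonzero cohomology, corresponding to $w = 1$ in the second alternative of the theorem. For the inductive step, assume $\lambda$ is non-dominant and pick a simple root $\alpha_i$ with $a = \langle \lambda, \alpha_i^\vee \rangle < 0$. If $a = -1$, then $s_i \bullet \lambda = \lambda$, and the middle case above gives total vanishing, matching the first alternative of the theorem. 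If $a \le -2$, the third case reduces the cohomology of $\cL_\lambda$ to that of $\cL_{s_i \bullet \lambda}$ with a degree shift of one; by induction this is either concentrated in degree $\ell(w')$ with value $V_\mu$ — giving $\cL_\lambda$ concentrated in degree $\ell(s_iw') = \ell(w')+1$ as required — or identically zero, propagating the vanishing.

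The main obstacle will be the identification in the third case of $\rR^1(\pi_i)_* \cL_\lambda$ with $(\pi_i)_* \cL_{s_i \bullet \lambda}$. Via Serre duality the fiberwise cohomology $\rH^1(\bP^1, \cO(a))$ is dual to $\rH^0(\bP^1, \cO(-a-2))$, and the relative dualizing sheaf of $\pi_i$ corresponds under the equivalence $G \times^B (-) \leftrightarrow (-)$ to the $B$-character $-\alpha_i$. Combining these with the identity $s_i(\lambda+\rho) - \rho = \lambda - (a+1)\alpha_i$, one verifies that the $B$-equivariant structure on $\rR^1(\pi_i)_* \cL_\lambda$ agrees with that of $\cL_{s_i\bullet\lambda}$. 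Everything else is standard Weyl-group combinatorics and the degenerate Leray spectral sequence.
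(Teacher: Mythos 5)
Your proposal is a correct reconstruction of Demazure's proof via minimal-parabolic $\bP^1$-fibrations, which is exactly the argument in the references the paper cites for this theorem (Weyman, \emph{Cohomology of Vector Bundles and Syzygies}, Ch.~4; Demazure, ``A very simple proof of Bott's theorem''); the paper does not give its own proof. Two small points to tidy up: the relevant composite is $w's_i$ (not $s_iw'$), since the shifted action satisfies $(w's_i)\bullet\lambda = w'\bullet(s_i\bullet\lambda)$; and the length identity $\ell(w's_i)=\ell(w')+1$ needed in the inductive step does hold, but it deserves a one-line justification: writing $\mu = s_i\bullet\lambda$, the hypothesis $\langle\mu+\rho,\alpha_i^\vee\rangle>0$ together with $w'(\mu+\rho)$ dominant regular forces $w'(\alpha_i)>0$, which is exactly the condition $\ell(w's_i)=\ell(w')+1$.
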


Alternatively, instead of using a single Weyl group element $w \in W$, we could apply the shifted action of simple reflections (such that our weight becomes closer to dominant at each step) and keep track of how many we use. We will refer to this as ``Bott's algorithm''.

There are analogous results over other homogeneous spaces $G/P$ for parabolic subgroups $P$. They are obtained from the above result by applying the Leray spectral sequence to the composition $G/B\rightarrow G/P\rightarrow *$ along with a relative version of Bott's theorem. We will use this later, but explain it on a case-by-case basis to simplify notation.

\subsection{Graded $G$-algebras.} \label{sec:gradedGalg}

Given two dominant weights $\lambda$ and $\mu$, we get a multiplication map on sections
\[
V_\lambda \otimes V_\mu = \rH^0(G/B; \cL_\lambda) \otimes \rH^0(G/B;
\cL_\mu) \to \rH^0(G/B; \cL_{\lambda + \mu}) = V_{\lambda + \mu}.
\]
This is the {\bf Cartan product}, and is surjective \cite[Proposition 1.5.1]{frobenius}.

Let $\lambda^1, \dots, \lambda^N$ be dominant weights. Using the Cartan product, we can define a $G$-equivariant multi-graded algebra
\[
\bigoplus_{d_1, \dots, d_N \ge 0} V_{d_1 \lambda^1 + \cdots + d_N \lambda^N}
\]
which is a quotient of the multi-graded symmetric algebra $\Sym(V_{\lambda^1}) \otimes \cdots \otimes \Sym(V_{\lambda^N})$. Then the ideal of this quotient is generated by elements of total degree 2, and in fact these algebras are Koszul \cite[Theorem 3]{inamdar}.

\subsection{Grosshan's deformation.} \label{sec:grosshans}

See \cite[\S 15]{grosshans} for more details on the material in this section. Grosshans constructs a homomorphism $h \colon \Lambda_G\rightarrow\bZ$ satisfying the conditions
\begin{compactitem}
\item $h(\omega) \ge 0$ for $\omega\in\Lambda_G^+$,
\item If $\chi >\chi'$ (i.e., $\chi-\chi'$ is a sum of positive roots), then $h(\chi) > h(\chi')$.
\end{compactitem}
Let $A$ be a commutative $K$-algebra with a rational $G$-action. Set
\begin{align*}
A_n &=\lbrace a\in A\ |\ h(\chi)\le n \text{ for all weights } \chi \text{ of } T  \text{ in the span }\langle G \cdot a\rangle\rbrace\\
\gr(A) &=\bigoplus_{n\ge 0} A_n/A_{n-1}.
\end{align*}
Then $\gr(A)$ is an algebra with a rational $G$-action, with a product induced from the product on $A$. The algebras $A$ and $\gr(A)$ have the same subring of $U$-invariants. Define 
\[
D=\sum_{n\ge 0} A_n x^n\subset A[x].
\]

The algebra $D$ has a rational $G$-action and has the following properties.

\begin{theorem} Let $i \colon K[x]\rightarrow D$ be the inclusion. Then $D$ is flat over $K[x]$ and the fiber of $i$ over the ideal $(x-\alpha)$ for $\alpha\ne 0$ is isomorphic to $A$, and the fiber over $(x)$ is isomorphic to $\gr(A)$.
\end{theorem}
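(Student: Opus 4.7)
The plan is to recognize $D = \sum_{n \ge 0} A_n x^n$ as the Rees algebra associated to the filtration $\{A_n\}$ on $A$; all three assertions then become standard features of this construction. I would proceed in the order: flatness, then the special fiber at $(x)$, then the general fiber at $(x-\alpha)$ with $\alpha \ne 0$.

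For flatness, since $K[x]$ is a principal ideal domain, it suffices to show that $D$ is torsion-free as a $K[x]$-module. Picking any $K$-basis of $A$ exhibits $A[x] = A \otimes_K K[x]$ as a free $K[x]$-module, so $A[x]$ is torsion-free, and hence so is its submodule $D$.

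For the fiber over $(x)$, unraveling definitions gives $xD = \sum_{n \ge 0} A_n x^{n+1} = \sum_{n \ge 1} A_{n-1} x^n$, and therefore
\[
D/xD = \bigoplus_{n \ge 0} (A_n x^n)/(A_{n-1} x^n) \cong \bigoplus_{n \ge 0} A_n/A_{n-1} = \gr(A)
\]
with the convention $A_{-1} = 0$. The product $(a x^m)(b x^n) = (ab) x^{m+n}$ in $D$ plainly descends to the product on $\gr(A)$, and the $G$-action is respected termwise, so this is an isomorphism of rational $G$-algebras.

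For the fiber over $(x-\alpha)$ with $\alpha \ne 0$, consider the evaluation map $\phi_\alpha \colon D \to A$, $\sum a_n x^n \mapsto \sum \alpha^n a_n$. Surjectivity is immediate: any $a \in A$ lies in some $A_n$ (the filtration is exhaustive since every element generates a finite-dimensional $G$-submodule with only finitely many $T$-weights, on which $h$ is bounded), and $\alpha^{-n} a x^n \in D$ maps to $a$. The inclusion $(x-\alpha)D \subseteq \ker \phi_\alpha$ is obvious. The reverse inclusion is the crux, and the step I expect to be the main obstacle, since it is the only place where the filtration condition built into the definition of $D$ must really be used. Given $f = \sum_{n=0}^N a_n x^n \in \ker \phi_\alpha$, perform the division $f = (x-\alpha) g$ inside $A[x]$; this yields the recursion $b_n = (b_{n-1} - a_n)/\alpha$ for the coefficients of $g$ (with $b_{-1} = 0$). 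Because $b_{n-1} \in A_{n-1} \subseteq A_n$ and $a_n \in A_n$, induction gives $b_n \in A_n$, so $g \in D$. To confirm that $g$ actually has degree $N-1$, i.e., that $b_N = 0$, I would multiply the relation $\alpha b_n - b_{n-1} = -a_n$ by $\alpha^n$ and sum from $n = 0$ to $N$; the left side telescopes to $\alpha^{N+1} b_N$ while the right side is $-\phi_\alpha(f) = 0$, forcing $b_N = 0$ since $\alpha \ne 0$.
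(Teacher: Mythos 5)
The paper does not reproduce a proof of this theorem; it cites Grosshans \cite[\S 15]{grosshans}, where it is established by essentially the argument you give (this is the standard Rees-algebra deformation). Your three-step proof is correct. Flatness over the PID $K[x]$ follows from torsion-freeness of the submodule $D \subset A[x]$, as you say. The computation of $D/xD$ is a direct comparison of graded pieces, and you correctly note that the multiplication and $G$-action descend. For the fiber at $x = \alpha$, you rightly identify the crux as showing that the quotient $g$ in $f = (x-\alpha)g$ actually lies in $D$, and your induction $b_n = (b_{n-1} - a_n)/\alpha \in A_n$ (using $A_{n-1} \subseteq A_n$) handles it. Two minor remarks: the base case $b_0 = -a_0/\alpha \in A_0$ should be stated to anchor the induction, and the telescoping verification that $b_N = 0$ is a bit roundabout — one can instead run the division algorithm $f = (x-\alpha)q + r$ with $r \in A$ constant and $\deg q = N-1$ built in, then observe $r = \phi_\alpha(f) = 0$, which bypasses the need to check degree drop. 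You also correctly flag that exhaustiveness of the filtration (needed for surjectivity of $\phi_\alpha$) is exactly where rationality of the $G$-action on $A$ enters.
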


One example of the algebra $\gr(A)$ is the multi-graded algebra with the Cartan product considered in \S\ref{sec:gradedGalg}, so one has the following application. For a proof, see \cite[Theorem 6.2]{landsbergweyman}.

\begin{theorem} 
Let $K$ be a field of characteristic $0$. Assume that the algebra $A$ is an integral domain and is multiplicity-free as a $G$-module. If $\gr(A)$ is generated in degrees $\le d$ then the defining ideal of $A$ is generated in degrees $\le 2d$.
\end{theorem}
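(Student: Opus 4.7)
The plan is to use the flat Grosshans degeneration $D$ to transfer a quadratic presentation from $\gr(A)$ back to $A$. Because $A$ is an integral domain, $A[x]$ is too, so $D \subset A[x]$ is $K[x]$-torsion-free and hence flat over $K[x]$; its generic fiber is $A$ and its special fiber is $\gr(A)$.

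The first step is to recognize $\gr(A)$ as a Cartan product algebra. Since $A$ is multiplicity-free, so is $\gr(A)$. If $V_\mu$ and $V_\nu$ both appear in $\gr(A)$, then their product lies in Grosshans degree $h(\mu)+h(\nu)=h(\mu+\nu)$, so any irreducible constituent $V_\sigma$ of this product satisfies $h(\sigma)=h(\mu+\nu)$; combined with $\sigma \le \mu+\nu$ in dominance order (because $V_\sigma \subset V_\mu\otimes V_\nu$) and the strict monotonicity of $h$, this forces $\sigma=\mu+\nu$. Thus multiplication in $\gr(A)$ is the Cartan product, and if $\lambda^1,\dots,\lambda^m$ are generating highest weights with $h(\lambda^i)\le d$ then
\[
\gr(A) \;=\; \bigoplus_{d_1,\dots,d_m \ge 0} V_{d_1\lambda^1+\cdots+d_m\lambda^m}.
\]
By the result cited in \S\ref{sec:gradedGalg}, the kernel $J_{\gr}$ of the surjection $R := \Sym(V_{\lambda^1})\otimes\cdots\otimes\Sym(V_{\lambda^m}) \twoheadrightarrow \gr(A)$ is generated in total symmetric-algebra degree $2$; equivalently, in the Rees grading on $R$ that gives $V_{\lambda^i}$ the weight $h(\lambda^i)$, $J_{\gr}$ is generated in degrees $\le 2d$.

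Next I would lift this surjection to a map of $K[x]$-algebras. In characteristic $0$, each short exact sequence $0 \to A_{h(\lambda^i)-1} \to A_{h(\lambda^i)} \to \gr(A)_{h(\lambda^i)} \to 0$ splits $G$-equivariantly, so pick $G$-equivariant lifts $\tilde V_{\lambda^i}\subset A_{h(\lambda^i)}$ and define
\[
\Phi \colon R \otimes_K K[x] \longrightarrow D, \qquad v\in V_{\lambda^i}\longmapsto v\, x^{h(\lambda^i)} \in A_{h(\lambda^i)}\, x^{h(\lambda^i)} \subset D.
\]
The reduction of $\Phi$ modulo $x$ is the surjection $R \twoheadrightarrow \gr(A)$ from the previous step, so a graded Nakayama argument in the Rees grading (in which $x$ has positive degree) shows that $\Phi$ itself is surjective; specializing at $x=1$ yields a surjection $R \twoheadrightarrow A$.

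Finally, let $J_D = \ker \Phi$. Flatness of $D$ over $K[x]$ propagates to flatness of $J_D$, and gives the identifications $J_D/xJ_D = J_{\gr}$ and $J_D\otimes_{K[x]} K[x]/(x-1) = J_A$, where $J_A$ is the defining ideal of $A$ in $R$. Lifting a $G$-equivariant generating set of $J_{\gr}$ concentrated in Rees degrees $\le 2d$ to homogeneous elements of $J_D$ in the same degrees and applying graded Nakayama to $J_D$, we see these lifts generate $J_D$; specialization at $x=1$ then produces generators of $J_A$ in Grosshans degrees $\le 2d$, as required. The main obstacle is to justify both graded Nakayama steps rigorously when $R\otimes K[x]$ is not local — one must verify that $D$ and $J_D$ are $G$-equivariantly finitely generated in the Rees grading, which is precisely where the integral-domain hypothesis (ensuring flatness of $D$) and the implicit finite generation coming from the bound on generator degrees of $\gr(A)$ are both essential.
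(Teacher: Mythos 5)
Since the paper itself defers this proof to Landsberg--Weyman \cite{landsbergweyman}, I will assess your argument directly. Your core strategy --- the flat Grosshans/Rees degeneration $D$, recognizing $\gr(A)$ as a Cartan product algebra, invoking Inamdar's degree-$2$ presentation from \S\ref{sec:gradedGalg}, and lifting by graded Nakayama --- is the right one. But the degree bookkeeping at the end has two genuine gaps. First, you phrase both the hypothesis (``$h(\lambda^i)\le d$'') and the conclusion (``Grosshans degrees $\le 2d$'') in terms of the Grosshans weight $h$, whereas ``degree'' in this theorem means the polynomial grading that $A$ carries as a quotient of a symmetric algebra $S$, a grading which $\gr(A)$ also carries because the Grosshans filtration is by $G$-submodules and hence respects any $G$-stable grading. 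This is how the statement is used in the proof of Theorem~\ref{thm:3copf4quad}, where $d=3$ is the maximal polynomial degree of a generator of $\gr(A)$ and the conclusion is that the defining equations of $Y$ occur in polynomial degree $\le 6$; a bound in $h$-degree is not a bound in polynomial degree. The fix is to make $R\otimes K[x]$ and $D$ bigraded, run Nakayama along the Rees degree (where $x$ has positive degree) while tracking polynomial degree (where $x$ has degree $0$, so the specialization $x=1$ is degree-preserving); Inamdar's degree-$2$ generation of $J_{\gr}$ then yields polynomial degree $\le 2d$, which propagates through the lift. Second, you stop at the conclusion that $J_A = \ker(R\to A)$ is generated in degrees $\le 2d$, where $R=\Sym(V_{\lambda^1})\otimes\cdots\otimes\Sym(V_{\lambda^m})$ is an auxiliary ring on generators of $\gr(A)$ of all degrees $\le d$. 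The theorem is about the defining ideal $I=\ker(S\to A)$ in $S=\Sym(A_1)$. You need an elimination step: taking the $\lambda^i$ to include all irreducible constituents of $A_1$, write $R=S[y_1,\ldots,y_p]$ with $y_j$ the higher-degree generators; since $A$ is generated in degree $1$, there are $f_j\in S$ of the same polynomial degree with $y_j\equiv f_j$ in $A$, and the degree-preserving retraction $R\to S$, $y_j\mapsto f_j$, carries $J_A$ onto $I$, so generators of $J_A$ in degree $\le 2d$ give generators of $I$ in degree $\le 2d$.

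Two smaller misstatements. Your closing worry about Nakayama without locality is unfounded: $D$ and $J_D$ are $\bZ_{\ge 0}$-graded in the Rees degree (with $D_0 = A^G = K$, forced by multiplicity-freeness since $V_0$ can occur only once), and $x$ has positive Rees degree, so $M=xM$ gives $M_n = xM_{n-1}=\cdots=0$ by induction on degree with no finiteness or locality hypotheses at all. Also, flatness of $D$ over $K[x]$ does not need the domain hypothesis --- it is automatic for any exhaustive filtration, since $D\subset A[x]$ is $K[x]$-torsion-free and $K[x]$ is a PID --- so that is not where the integral-domain assumption earns its keep. Its real role is in your first step: it ensures products of highest-weight vectors in $A$ are nonzero, so that the Cartan product $V_\mu\cdot V_\nu$ actually realizes $V_{\mu+\nu}$ in $\gr(A)$ rather than degenerating to zero, which is what makes $\gr(A)$ the full Cartan product algebra to which Inamdar's quadratic-generation result applies.
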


\section{Classical groups.}

As a warmup, we review some known results about the varieties that we
study in the case of classical groups. As far as we are aware, not all
of these statements can be readily found in the literature. So we
prove the statements in the case of the symplectic group and just
state the relevant differences for the orthogonal group (the proofs
proceed in the same way).

We use the language of partitions for the weights. For the symplectic case, this means that we have a basis $v_1, \dots, v_{2n}$ for $V$ chosen in such a way that the symplectic form $\omega$ is defined by $\omega(v_i, v_j) = 0$ if $|i-j| \ne n$ and $\omega(v_i, v_{i+n}) = 1$ for $i=1,\dots,n$. Then our maximal torus consists of diagonal matrices of the form ${\rm diag}(t_1, \dots, t_n, t_1^{-1}, \dots, t_n^{-1})$ and the weight $\lambda =(\lambda_1,\ldots ,\lambda_n)$ is defined by $(t_1, \dots, t_n) \mapsto t_1^{\lambda_1} \cdots t_n^{\lambda_n}$.
The fundamental weights are $\omega_i = (1^i, 0^{n-i})$ for $1\le i\le n$. The orthogonal case is similar, and more details are given in \S\ref{sec:typeBn}.

The symplectic and orthogonal Schur functors $\bS_{[\lambda]} V$ are defined to be $V_\lambda$, and are quotients of $\bS_\lambda V$. They can also be defined intrinsically. More precisely, the Weyl construction (see \cite[\S \S 17.3, 19.5]{fultonharris} for the case of characteristic $0$) specifies that the symplectic Schur functor $\bS_{[\lambda]} V$
can be defined as the cokernel
\[
\bS_{\lambda /(1^2)} V\xrightarrow{\psi_\lambda} \bS_{\lambda} V\rightarrow \bS_{[\lambda]} V\rightarrow 0
\]
where the map $\psi_\lambda$ is the composition of the map
$\bS_{\lambda/(1^2)}V\rightarrow \bS_{\lambda/(1^2)}V\otimes\bigwedge^2 V$ given by the trace of the symplectic form on $V$ with a $\GL(V)$-equivariant map
\[
\bS_{\lambda /(1^2)} V\otimes \bigwedge^2 V\rightarrow \bS_{\lambda}V
\]
which under the adjoint property of the skew Schur functor corresponds to the map
\[
\bS_\lambda V\rightarrow \bS_\lambda V\otimes\bigwedge^2 V\otimes\bigwedge^2 V^*
\]
which is the multiplication by the trace of the identity. The simplest example is when $\lambda = (1^i)$ for some $i \ge 2$, in which case $\bS_{(1^i)} V = \bigwedge^i V$ and we have the presentation
\[
\bigwedge^{i-2} V \to \bigwedge^i V \to \bS_{[1^i]} V \to 0.
\]
Here the map is just exterior multiplication by the symplectic form $\omega \in \bigwedge^2 V \cong \bigwedge^2 V^*$.

For the orthogonal groups we use the partition $(2)$ instead of $(1^2)$, but the description is similar.


\subsection{Type ${\rm C}_n$.} \label{sec:typeCn}

In this section we recall the construction for the type ${\rm C}_n$. It serves as a model for the constructions for the exceptional groups. The construction of the complexes $C(\lambda)_\bullet$ is sketched in \cite[Exercises 6.1--6.4]{weyman}. Unfortunately these exercises contain mistakes, so we do everything from scratch here.

Let $V$ be a vector space of dimension $2n$ over a field $K$ of arbitrary characteristic equipped with a symplectic form $\omega$. We denote by $\bS_{[\lambda]} V^* \cong \bS_{[\lambda]} V$ the irreducible representation of $\Sp(V)$ of highest weight $\lambda$. In this case $\lambda$ is a partition with at most $n$ parts.

Pick $r,m$ such that $0 \le r \le m \le n$. Let $E$ be a vector space of dimension $m$. We denote by $Y_{r,m,n} \subset \hom(E,V)$ the subvariety of linear maps for which the image of $E$ is contained in an isotropic subspace of dimension $r$. When $r=m=n$, we call $Y$ the Littlewood variety. Also, let $A = K[\hom(E,V)]$. Fix a basis $e_1, \dots, e_m$ for $E$, so that we may represent an element of $\hom(E,V)$ with an $m$-tuple of vectors in $V$, which we write as $({\bf v}_1, \dots, {\bf v}_m)$. For $1 \le i < j \le m$, we define the function $x_{i,j}$ via $({\bf v}_1, \dots, {\bf v}_m) \mapsto \omega({\bf v}_i, {\bf v}_j)$.

The isotropic Grassmannian $\IGr(r,V)$ consists of all $r$-dimensional
subspaces of $V$ on which $\omega$ is identically $0$, so is naturally a subvariety of the usual Grassmannian $\Gr(r,V)$. Let $\cR_r$ denote the restriction of the tautological rank $r$ subbundle of $\Gr(r,V)$ to $\IGr(r,V)$. Then $\IGr(r,V)$ is smooth and connected, and is the zero locus of a regular section of $\bigwedge^2 \cR_r^*$ (see for example \cite[Proposition 4.3.6]{weyman}), so
\[
\dim \IGr(r,V) = \dim \Gr(r,V) - \binom{r}{2} = \frac{r(4n - 3r + 1)}{2}.
\]

The full flag variety for $\Sp(V)$ can be described as the space of flags of subspaces $V_1 \subset V_2 \subset \cdots \subset V_n \subset V$ where each $V_i$ is an isotropic subspace. Denote this by $\IFl(V)$. In particular, we have an obvious projection $\pi \colon G/B \to \IGr(r,V)$, and the fiber of $\pi$ over $W \in \IGr(r,V)$ is the variety $\Fl(W) \times \IFl(W^\perp / W)$ ($W^\perp/W$ inherits a symplectic form from $V$). In particular, $\pi$ is the relative flag variety $\Fl(\cR_r) \times \IFl(\cR^\perp_r / \cR_r)$. 

If $\lambda$ is a partition, then it is a dominant weight and we get a line bundle $\cL(\lambda)$ on $\IFl(V)$ as in \S\ref{sec:schurmodules}. If $\ell(\lambda) \le r$, then $\cL(\lambda)$ is fiberwise trivial on the $\IFl(\cR^\perp_r / \cR_r)$ component of $\pi$. In particular, $\pi_* \cL(\lambda) = \bS_\lambda(\cR_r^*)$ and the higher direct images vanish: $\rR^j \pi_* \cL(\lambda) = 0$ for $j > 0$. A consequence of the Leray spectral sequence \cite[5.8.6]{weibel} is then that 
\[
\rH^i(\IGr(r,V); \bS_{\lambda}(\cR_r^*)) = \rH^i(\IFl(V); \cL(\lambda)) \quad (i \ge 0).
\]
Combining this with Kempf vanishing for $\cL(\lambda)$ from \S\ref{sec:schurmodules}, we conclude that
\begin{align} \label{eqn:typeC-sheaf-coh}
\rH^i(\IGr(r,V); \bS_\lambda(\cR_r^*)) = \begin{cases} \bS_{[\lambda]}(V) & \text{if } i=0\\
0 & \text{if } i>0 \end{cases}.
\end{align}

\begin{theorem}
\begin{compactenum}[\rm (1)]
\item The variety $Y_{r,m,n}$ is irreducible and has codimension $(m-r)(2n-r) + \binom{r}{2}$. Furthermore, $Y_{r,m,n}$ has rational singularities, and in particular, is normal and Cohen--Macaulay.

\item As a $\GL(E) \times \Sp(V)$-module, the coordinate ring of $Y_{r,m,n}$ has a filtration with associated graded given by
\[
K[Y_{r,m,n}] \approx \bigoplus_{\lambda,\ \ell(\lambda) \le r} \bS_\lambda E \otimes \bS_{[\lambda]} V^*.
\]

\item The ideal defining $Y_{r,m,n}$ is minimally generated by the functions $x_{i,j}$ for $1 \le i < j \le m$ together with the minors of order $r+1$ of the matrix $\phi$. In particular, $Y_{m,m,n}$ is a complete intersection in $\hom(E,V)$.

\item If $0 < r < m$, the singular locus of $Y_{r,m,n}$ is $Y_{r-1,m,n}$. The singular locus of $Y_{m,m,n}$ is $Y_{m-2,m,n}$.
\end{compactenum}
\end{theorem}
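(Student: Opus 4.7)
The plan is to apply the geometric technique of \S\ref{sec:geomtech} to a natural desingularization of $Y_{r,m,n}$, taking the ambient smooth projective variety to be $\IGr(r,V)$. On it, consider the subbundle $\cS = E^* \otimes \cR_r$ of the trivial bundle $\hom(E,V) \times \IGr(r,V)$; its total space $Z$ parametrizes pairs $(\phi, W)$ with $\phi(E) \subseteq W$, so that $\eta = E \otimes \cR_r^*$, $\xi = E \otimes (V/\cR_r)^*$, and by construction $q(Z) = Y_{r,m,n}$. Since $Z$ is a vector bundle over the smooth irreducible $\IGr(r,V)$, it is smooth and irreducible. For a generic $\phi \in Y_{r,m,n}$ the image $\phi(E)$ is itself an $r$-dimensional isotropic subspace, uniquely determining $W$, so $q$ is birational. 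This yields irreducibility of $Y_{r,m,n}$ and the codimension formula from $\dim Z = \dim \IGr(r,V) + rm$.

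For parts (1) and (2), I would compute $\rH^\bullet(\IGr(r,V); \Sym(\eta))$ by combining the Cauchy filtration \eqref{eqn:cauchy-filt} with the cohomology formula \eqref{eqn:typeC-sheaf-coh}, giving
\[
\rH^0(\IGr(r,V); \Sym(\eta)) \approx \bigoplus_{\ell(\lambda) \le r} \bS_\lambda E \otimes \bS_{[\lambda]} V
\]
and vanishing in positive degrees. By Theorem~\ref{thm:geom-tech} this equals $q_* \cO_Z$, with $\rR^i q_* \cO_Z = 0$ for $i > 0$. Combined with the smoothness of $Z$, this gives rational singularities of $Y_{r,m,n}$ as soon as we know $K[Y_{r,m,n}] = q_* \cO_Z$. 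Cohen--Macaulayness of $q_* \cO_Z$ follows from Theorem~\ref{thm:CM-geom} upon performing the analogous Bott/Kempf computation for the dual twist $\Sym(\eta) \otimes \omega_{\IGr(r,V)} \otimes \det \xi^*$. Thus (1) reduces to identifying $K[Y_{r,m,n}]$ with $q_* \cO_Z$, which is precisely (2).

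For (3), the $x_{i,j}$ (since the image is isotropic) and the $(r+1)$-minors (since the image has rank $\le r$) all vanish on $Y_{r,m,n}$; let $I$ denote the ideal they generate. To show $I$ is the full defining ideal, decompose $A \approx \bigoplus_\lambda \bS_\lambda E \otimes \bS_\lambda V$ and match isotypic components: the $(r+1)$-minors generate under the $\GL(E) \times \Sp(V)$-action the components with $\ell(\lambda) > r$ (the classical type-A determinantal situation), while for $\ell(\lambda) \le r$ the $x_{i,j}$ generate $\bS_\lambda E \otimes \ker(\bS_\lambda V \twoheadrightarrow \bS_{[\lambda]} V)$ via the Weyl construction recalled before \S\ref{sec:typeCn} (the kernel being the image of $\bS_{\lambda/(1^2)} V$ under multiplication by $\omega$). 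Matching characters with the expression for $q_* \cO_Z$ above forces $A/I = q_* \cO_Z$, which simultaneously proves (2), (3), and the normality needed for (1). Minimality of the generating set is then a standard degree count. The complete intersection claim for $r=m=n$ is immediate, since the $(n+1)$-minors of the generic $n \times 2n$ matrix all vanish, leaving only the $\binom{n}{2}$ quadrics $x_{i,j}$, which matches the codimension.

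The main obstacle will be (4). On one side, the stratification $Y_{r-1,m,n} \subset Y_{r,m,n}$ together with smoothness of $Z$ shows that $q$ is an isomorphism off $Y_{r-1,m,n}$ when $r < m$, so the singular locus is contained in $Y_{r-1,m,n}$, and a Zariski tangent space computation at a generic rank-$(r-1)$ point using the explicit equations of (3) shows such points are singular. For $r = m$, however, $Y_{m,m,n}$ is a complete intersection defined by the $\binom{m}{2}$ symplectic pairings $x_{i,j}$, and its singular locus is the zero locus of the Jacobian of these quadrics. This Jacobian is a $\binom{m}{2} \times 2nm$ matrix whose entries are linear in $\phi$; one must compute explicitly that its rank equals $\binom{m}{2}$ precisely when $\phi$ has rank $\ge m-1$, so rank-$(m-1)$ points remain smooth (explaining the jump) while rank-$\le (m-2)$ points acquire extra tangent directions. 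This local linear algebra calculation is the technical heart of the argument.
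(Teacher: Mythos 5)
Your setup, desingularization, and treatment of parts (1), (2), and (4) follow the same route as the paper: construct $Z = E^* \otimes \cR_r$ over $\IGr(r,V)$, compute cohomology via the Cauchy filtration and \eqref{eqn:typeC-sheaf-coh}, get rational singularities from the vanishing of higher direct images, and attack (4) by the Jacobian criterion at explicit low-rank points. For (4), the paper carries out precisely the "local linear algebra calculation" you flag as the technical heart: it evaluates the Jacobian at $\psi: e_i \mapsto v_i$ ($i \le r-1$) when $r<m$, and at $\phi'$ (rank $m-1$) and a rank-$(m-2)$ point when $r=m$, so your outline is sound there.

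The real divergence, and the gap, is in part (3). You assert that "for $\ell(\lambda) \le r$ the $x_{i,j}$ generate $\bS_\lambda E \otimes \ker(\bS_\lambda V \twoheadrightarrow \bS_{[\lambda]} V)$ via the Weyl construction," and conclude $A/I = q_*\cO_Z$ by "matching characters." But the Weyl construction only tells you that the kernel is the image of $\psi_\lambda \colon \bS_{\lambda/(1^2)}V \to \bS_\lambda V$; it does \emph{not} tell you that the ideal generated by the quadrics $x_{i,j}$ already contains this image in every degree. That containment is essentially equivalent to the statement being proved (a type-$\rC$ second fundamental theorem), so this step is circular as written. Moreover the character-matching strategy only gives a lower bound on the graded character of $A/I$ (via the surjection $A/I \twoheadrightarrow K[Y] \hookrightarrow q_*\cO_Z$); you would need an independent upper bound, which is exactly the missing claim. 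The paper avoids this entirely: it observes that $\Spec(A/I_{m,m,n})$ is a complete intersection, hence Cohen--Macaulay, so radicality follows once a single nonsingular point is exhibited by the Jacobian criterion; the $r<m$ case then follows from the $r=m$ case via the Cartan product. This commutative-algebra route also works over an arbitrary field, whereas your appeal to direct-sum decompositions of $A$ into irreducibles tacitly restricts to characteristic zero (the paper only has a good filtration $\approx$, not a direct sum), even though the theorem is stated for arbitrary $K$. Finally, note that the paper establishes normality \emph{before} and independently of (3), by showing the algebra generators $A_{(1^i)}$ of $q_*\cO_Z$ lie in the image of $A \to K[Y]$; your proposal ties normality to the unproved character identity, so the logical order should be repaired as well.
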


\begin{proof}
The variety $Y_{r,m,n}$ has a desingularization which is the vector bundle $\hom(E, \cR_r) = E^* \otimes \cR_r$ over $\IGr(r,V)$. This desingularization shows that $Y_{r,m,n}$ has codimension $(m-r)(2n-r) + \binom{r}{2}$ and is irreducible. In the notation of \S\ref{sec:geomtech}, we have $\eta = E \otimes \cR_r^*$. Write $A_\lambda = \bS_\lambda E \otimes \bS_{[\lambda]} V^*$. We have
\[
K[\tilde{Y}_{r,m,n}] = \rH^0 (\IGr(r,V); \Sym(E \otimes \cR^*_r)) \approx \rH^0 (\IGr(r,V); \bigoplus_{\lambda,\ \ell(\lambda) \le r} \bS_\lambda E\otimes \bS_\lambda \cR^*_r) = \bigoplus_{\lambda,\ \ell(\lambda) \le r} A_\lambda
\]
(the first equality follows from \S\ref{sec:geomtech}, the middle $\approx$ is \eqref{eqn:cauchy-filt}, and the last equality follows from \eqref{eqn:typeC-sheaf-coh}). The multiplication $A_\lambda \otimes A_\mu \to A_{\lambda + \mu}$ on this algebra is given by the Cartan product, so by \S\ref{sec:gradedGalg}, it is generated by $A_{(1^i)}$ for $i=0,1,\dots,r$. To prove that $Y_{r,m,n}$ is normal, it is enough to know that $A_{(1^i)}$ is in the $A$-submodule of $K[Y_{r,m,n}]$ generated by 1. But this follows since these functions are nonzero on matrices of rank at least $i$. 

By \eqref{eqn:cauchy-filt}, we have $\Sym(E \otimes \cR_r^*) \approx \bigoplus_\lambda \bS_\lambda E \otimes \bS_\lambda(\cR_r^*)$. Furthermore, the higher cohomology of the latter vector bundle vanishes by \eqref{eqn:typeC-sheaf-coh}, so $Y_{r,m,n}$ has rational singularities.

Let $I_{r,m,n}$ be the ideal generated by $x_{i,j}$ and the minors of order $r+1$ of the matrix $\phi$. It is clear that $I_{r,m,n}$ set-theoretically defines $Y_{r,m,n}$, so we need to show that $I_{r,m,n}$ is radical.

We first handle $m=r$. The scheme defined by $I_{m,m,n}$ is a complete intersection in $\hom(E,V)$, and hence defines a Cohen--Macaulay scheme. So to show that $I_{m,m,n}$ is radical, we just need to show that this scheme has a nonsingular point, which can be checked using the Jacobian criterion \cite[Theorem 18.15]{eisenbud}. To do this, pick a basis $v_1, \dots, v_{2n}$ for $V$ in such a way that the symplectic form $\omega$ is defined by $\omega(v_i, v_j) = 0$ if $|i-j| \ne n$ and $\omega(v_i, v_{i+n}) = 1$ for $i=1,\dots,n$. Let $u_{i,j}$ be the function on $\hom(E,V)$ that picks out the coefficient of $v_i$ in the image of $e_j$. Then we can write
\[
x_{i,j} = \sum_{k=1}^n (u_{k,i} u_{n+k,j} - u_{n+k,i} u_{k,j}).
\]
Now let $\phi \in \hom(E,V)$ be the matrix defined by $e_i \mapsto v_i$ for $i=1,\dots,m$. We will show that $\phi$ is a nonsingular point via the Jacobian criterion. Note that $u_{i,j}(\phi) = \delta_{i,j}$. In particular $\frac{\partial x_{i,j}}{\partial u_{k,\ell}}(\phi) = 0$ unless $(k,\ell) = (n+i,j)$ or $(k,\ell) = (n+j,i)$, in which case it is 1 and $-1$, respectively. In particular, for any $(k,\ell)$, we see that $\frac{\partial x_{i,j}}{\partial u_{k,\ell}}(\phi)$ is nonzero for at most 1 value of $(i,j)$, which shows that the Jacobian has full rank, and hence $\phi$ is a nonsingular point and $I_{m,m,n}$ is radical. Note also that if we pick $\phi'$ to agree with $\phi$ except that $e_m \mapsto 0$, the Jacobian still has full rank at $\phi'$. The only difference is that $x_{i,m}$ will have only one nonzero partial derivative at $\phi'$ instead of two. But note that if we also had $e_{m-1} \mapsto 0$, then all partial derivatives of $x_{i,m}$ would be 0. This shows that the singular locus of $Y_{m,m,n}$ is $Y_{m-2,m,n}$.

Hence we have shown that 
\[
A/I_{m,m,n} \approx \bigoplus_{\lambda} A_\lambda.
\]
Using the Cartan product from \S\ref{sec:gradedGalg}, it is clear that after modding out by all minors of size $r+1$ of the matrix $\phi$, we get
\[
A/I_{r,m,n} \approx \bigoplus_{\lambda,\ \ell(\lambda) \le r} A_\lambda.
\]
But we have already seen that this is the coordinate ring of $Y_{r,m,n}$, so $I_{r,m,n}$ is radical. Finally, we apply the Jacobian criterion to $I_{r,m,n}$. We discussed $r=m$ above, so assume $r<m$. Let $\psi$ be the matrix $e_i \mapsto v_i$ for $i=1,\dots,r-1$. Then the partial derivatives of all minors of order $r$ are 0 at $\psi$. Also, $x_{i,j}$ has a nonzero partial derivative at $\psi$ if and only if $i \le r-1$, so the rank of the Jacobian at $\psi$ is $(r-1)m - \binom{r}{2}$, which is less than $(m-r)(2n-r) + \binom{r}{2} = \codim Y_{r,m,n}$ since $m > r$.
\end{proof}

Since $Y_{m,m,n}$ is a complete intersection, its minimal free resolution $\bF_\bullet$ is the Koszul complex with $\bF_i = \bigwedge^i \bigwedge^2 E \otimes A(-2i)$. We will be most interested in the case $m=n$. For the remainder of this section, we work over a field of characteristic $0$ for simplicity. Let $Q_{-1}$ be the set of partitions $\lambda$ such that $\bS_\lambda E$ appears in $\bigwedge^i \bigwedge^2 E$ for some $i$. This set can be described recursively as follows \cite[I.A.7, Ex. 5]{macdonald}: The empty partition belongs to $Q_{-1}$. A non-empty partition $\mu$ belongs to $Q_{-1}$ if and only if the number of rows in $\mu$ is one more than the number of columns, i.e., $\ell(\mu) = \mu_1 + 1$, and the partition obtained by deleting the first row and column of $\mu$, i.e., $(\mu_2 - 1, \dots, \mu_{\ell(\mu)} - 1)$, belongs to $Q_{-1}$.

\begin{proposition} \label{prop:typeClwood}
Assume that $K$ has characteristic $0$. The $\bS_\lambda(E)$-isotypic component of the Koszul complex ${\bf F}_\bullet$ gives an $\Sp(V)$-equivariant resolution $0 \to C(\lambda)_\bullet \to \bS_{[\lambda]} V^* \to 0$ where
\[
C(\lambda)_i = \bigoplus_{\mu \vdash 2i,\ \mu \in Q_{-1}} \bS_{\lambda / \mu} V^*.
\]
\end{proposition}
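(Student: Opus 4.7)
The plan is to extract the $\bS_\lambda E$-isotypic component of the Koszul resolution $\bF_\bullet$ of $K[Y_{n,n,n}]$ in a $\GL(E) \times \Sp(V)$-equivariant way, and then identify the resulting complex combinatorially. Since $K$ has characteristic $0$, $\GL(E)$-representations are completely reducible, so the $\bS_\lambda E$-multiplicity space $C(\lambda)_\bullet := \hom_{\GL(E)}(\bS_\lambda E, \bF_\bullet)$ is a direct summand of $\bF_\bullet$ as a complex of $\Sp(V)$-representations, and passage to multiplicity spaces is an exact functor. In particular $C(\lambda)_\bullet$ has homology concentrated in degree $0$, equal to the $\bS_\lambda E$-multiplicity space of $K[Y_{n,n,n}]$. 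By part (2) of the previous theorem, $K[Y_{n,n,n}] \cong \bigoplus_\mu \bS_\mu E \otimes \bS_{[\mu]} V^*$, so this multiplicity space is $\bS_{[\lambda]} V^*$ with its natural $\Sp(V)$-action, which gives the resolution $0 \to C(\lambda)_\bullet \to \bS_{[\lambda]} V^* \to 0$.

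To identify each $C(\lambda)_i$ explicitly, I would decompose the Koszul term $\bF_i = \bigwedge^i(\bigwedge^2 E) \otimes_K \Sym(E \otimes V^*)(-2i)$ using two standard ingredients. The first is the Cauchy identity $\Sym(E \otimes V^*) \cong \bigoplus_\nu \bS_\nu E \otimes \bS_\nu V^*$ as $\GL(E) \times \GL(V)$-modules, hence as $\GL(E) \times \Sp(V)$-modules by restriction. The second is the plethystic decomposition $\bigwedge^i(\bigwedge^2 E) \cong \bigoplus_{\mu \in Q_{-1},\ |\mu|=2i} \bS_\mu E$. Combining these and applying the Littlewood--Richardson rule $\bS_\mu E \otimes \bS_\nu E \cong \bigoplus_\lambda c^\lambda_{\mu\nu} \bS_\lambda E$, the $\bS_\lambda E$-multiplicity space of $\bF_i$ is
\[
\bigoplus_{\mu \in Q_{-1},\ |\mu|=2i}\ \bigoplus_\nu c^\lambda_{\mu\nu} \bS_\nu V^* \;=\; \bigoplus_{\mu \in Q_{-1},\ |\mu|=2i} \bS_{\lambda/\mu} V^*,
\]
where the second equality is the defining expansion of the skew Schur functor. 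This is exactly $C(\lambda)_i$ as in the statement (terms with $\mu \not\subseteq \lambda$ vanish automatically).

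The main technical ingredient is the plethystic identity for $\bigwedge^i(\bigwedge^2 E)$ in terms of the class $Q_{-1}$; this is classical and is in fact the content of the exercise in Macdonald cited immediately before the proposition, but if a self-contained argument is desired I would prove it by induction on the rank of $\mu$, using the recursive characterization of $Q_{-1}$ stated in the paper together with a character comparison (equivalently, via the Jacobi--Trudi-type manipulation of $\prod_{i<j}(1 - t x_i x_j)$). Beyond that, every step is formal: the existence of the Koszul resolution was established in the preceding theorem (because $Y_{n,n,n}$ is a complete intersection cut out by the $x_{i,j}$), the Cauchy and Littlewood--Richardson identities are standard, and the passage to isotypic components is exact in characteristic $0$.
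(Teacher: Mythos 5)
Your proposal is correct and follows essentially the same path as the paper's proof: extract the $\bS_\lambda E$-isotypic component of the Koszul resolution, use the Cauchy decomposition of $A$ and the plethystic decomposition of $\bigwedge^i\bigwedge^2 E$ indexed by $Q_{-1}$, and identify the multiplicity spaces as skew Schur functors (the paper cites \cite[Theorem 2.3.6]{weyman} for precisely the Littlewood--Richardson step you spell out). Your write-up is just more explicit about the exactness of passing to multiplicity spaces in characteristic $0$, which the paper takes for granted.
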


\begin{proof}
The coordinate ring of $Y$ is 
\[
K[Y] = \bigoplus_\lambda \bS_\lambda E \otimes \bS_{[\lambda]} V^*
\]
where $\bS_{[\lambda]} V^*$ denotes the irreducible representation of
$\Sp(V)$ of highest weight $\lambda$. Let's take the $\bS_\lambda
E$-isotypic component of this resolution. Note that
\[
A = \bigoplus_\mu \bS_\mu E \otimes \bS_\mu V^*.
\]
So the $\bS_\lambda E$-isotypic component of $\bS_{\mu} E \otimes A(-2i)$ can be written as $\bS_{\lambda / \mu} V^*$ (see for example, \cite[Theorem 2.3.6]{weyman}). Hence we get the $\Sp(V)$-equivariant resolution $0 \to C(\lambda )_\bullet \to \bS_{[\lambda]} V^* \to 0$ where
\[
C(\lambda )_i = \bigoplus_{\mu \vdash 2i,\ \mu \in Q_{-1}} \bS_{\lambda /  \mu} V^*. \qedhere
\]
\end{proof}

\begin{example}
Take $n=4$ and $\dim E = 3$. The resolution is
\[
0 \to \bS_{2,2,2} E \otimes A(-6) \to \bS_{2,1,1} E \otimes A(-4) \to
\bS_{1,1}E \otimes A(-2) \to A \to K[Y] \to 0. \qedhere
\]
\end{example}
  
\begin{remark} 
When $m > n$, the homology of the Koszul complex on the invariants $x_{i,j}$ is calculated in \cite{ssw} (this was first done in \cite{enright} but relies on more involved machinery).
\end{remark}


\subsection{Type ${\rm B}_n$.} \label{sec:typeBn}

This will be similar to the case of type ${\rm C}_n$, so we just state
the differences.

Let $V$ be a vector space of dimension $2n+1$ over a field $K$
equipped with a nonsingular quadratic form $q$. Then $q$ naturally
defines an orthogonal form $\omega$ via
\[
\omega(x,y) = q(x+y) - q(x) - q(y).
\]
We remark that $q$ is determined by $\omega$ if ${\rm char}\ K \ne
2$. A subspace $U \subset V$ is {\bf isotropic} if $q(u) = 0$ for all
$u \in U$. In that case, we have $\dim U \le n$. 

We denote by $\bS_{[\lambda]} V^*$ the irreducible representation of
$\SO(V)$ of highest weight $\lambda$. In this case $\lambda$ is a
partition with at most $n$ parts.

Pick $r,m$ such that $0 \le r \le m \le n$. Let $E$ be a vector space
of dimension $m$. We denote by $Y_{r,m,n} \subset \hom(E,V)$ the
subvariety of linear maps for which the image of $E$ is contained in
an isotropic subspace of dimension $r$. When $r=m=n$, we call $Y$ the Littlewood variety. Also, let $A = K[\hom(E,V)]$. Fix a basis $e_1, \dots, e_m$ for $E$, so that we may represent an element of $\hom(E,V)$ with an $m$-tuple of vectors in $V$, which we write as $({\bf v}_1, \dots, {\bf v}_m)$. For $1 \le i < j \le m$, we
define the function $x_{i,j}$ via $({\bf v}_1, \dots, {\bf v}_m)
\mapsto \omega({\bf v}_i, {\bf v}_j)$. For $1 \le i \le m$, we define
the function $x_{i,i}$ via $({\bf v}_1, \dots, {\bf v}_m) \mapsto
q({\bf v}_i)$.

The orthogonal Grassmannian $\OGr(r,V)$ consists of all $r$-dimensional subspaces of $V$ on which $q$ is identically $0$, so is naturally a subvariety of the usual Grassmannian $\Gr(r,V)$. Let $\cR_r$ denote the restriction of the tautological rank $r$ subbundle of $\Gr(r,V)$ to $\OGr(r,V)$. Then $\OGr(r,V)$ is smooth, connected, and is the zero locus of a regular section of $\Sym^2(\cR_r^*)$ (see for example \cite[Proposition 4.3.8]{weyman}), so 
\[
\dim \OGr(r,V) = \dim \Gr(r,V) - \binom{r+1}{2} = \frac{r(4n - 3r - 3)}{2}.
\]

Just as in \S\ref{sec:typeCn}, we get the following calculation:
\begin{align}
\rH^i(\OGr(r,V); \bS_\lambda(\cR_r^*)) = \begin{cases} \bS_{[\lambda]}(V) & \text{if } i=0\\
0 & \text{if } i>0 \end{cases}.
\end{align}

\begin{theorem}
\begin{compactenum}[\rm (1)]
\item The variety $Y_{r,m,n}$ is irreducible and has codimension $(m-r)(2n+1-r) + \binom{r+1}{2}$. Furthermore, $Y_{r,m,n}$ has rational singularities, and in particular, is normal and Cohen--Macaulay.

\item As a $\GL(E) \times \SO(V)$-module, the coordinate ring of $Y_{r,m,n}$ has a filtration with associated graded given by
\[
K[Y_{r,m,n}] \approx \bigoplus_{\lambda,\ \ell(\lambda) \le r} \bS_\lambda E \otimes \bS_{[\lambda]} V^*.
\]

\item The ideal defining $Y_{r,m,n}$ is minimally generated by the functions $x_{i,j}$ for $1 \le i \le j \le n$, together with the minors of order $r+1$ of the matrix $\phi$. In particular, $Y_{m,m,n}$ is a complete intersection in $\hom(E,V)$.

\item If $r>0$, the singular locus of $Y_{r,m,n}$ is $Y_{r-1,m,n}$.
\end{compactenum}
\end{theorem}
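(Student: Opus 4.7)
The approach is to mirror the type $\rC_n$ proof, replacing the isotropic Grassmannian and antisymmetric form by the orthogonal Grassmannian $\OGr(r,V)$ and the quadratic form $q$, and allowing the equations $x_{i,j}$ to include the diagonal entries $x_{i,i}=q(\bv_i)$. The desingularization is $q\colon Z=\hom(E,\cR_r)\to Y_{r,m,n}$ over $\OGr(r,V)$, with $\eta = E\otimes \cR_r^*$ in the notation of \S\ref{sec:geomtech}. Combining the Cauchy filtration \eqref{eqn:cauchy-filt} with the cohomology calculation for $\bS_\lambda(\cR_r^*)$ displayed just before the statement, Theorem \ref{thm:geom-tech} yields the coordinate ring decomposition of part~(2) together with the vanishing of higher direct images of $\cO_Z$, proving rational singularities. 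The codimension formula in part~(1) follows directly from $\dim Z = \dim \OGr(r,V) + mr$ and an elementary manipulation. Normality then follows exactly as in type $\rC_n$: the coordinate ring $K[\tilde Y_{r,m,n}]$ is generated over $A$ via the Cartan product by the isotypic components $\bS_{(1^i)}E \otimes \bS_{[1^i]}V^*$ for $i\le r$, and these are realized as (the images of) the $i\times i$ minors of the tautological matrix $\phi$, hence lie in the image of $K[Y_{r,m,n}]\hookrightarrow K[\tilde Y_{r,m,n}]$.

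For the ideal statement~(3), I would first handle $r=m$. The $\binom{m+1}{2}$ functions $x_{i,j}$ ($1\le i\le j\le m$) span a copy of $\Sym^2 E^*$ and cut out a subscheme of the correct codimension, hence a complete intersection (in particular Cohen--Macaulay), so radicality reduces to exhibiting a single nonsingular point via the Jacobian criterion. Taking $\phi\colon e_i\mapsto v_i$ for a suitable orthogonal basis in which $q$ and $\omega$ are put into standard form, the off-diagonal equations behave exactly as in type $\rC_n$, while the diagonal equations $x_{i,i}=q(v_i)$ contribute partial derivatives that are linear in the coordinates of $v_i$ and hence nonzero at $\phi$; the rank of the Jacobian is then computed by the same bookkeeping as in type $\rC_n$. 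For $r<m$, I would use the Cartan product argument of type $\rC_n$: quotienting by the minors of size $r+1$ restricts the decomposition of part~(2) to partitions with $\ell(\lambda)\le r$, which matches the already-computed coordinate ring of $Y_{r,m,n}$, so the defining ideal is radical.

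For the singular locus in~(4), the map $q\colon Z\to Y_{r,m,n}$ is an isomorphism over the open locus of matrices of rank exactly $r$ (such a matrix uniquely determines its isotropic image), so $\mathrm{Sing}(Y_{r,m,n})\subseteq Y_{r-1,m,n}$. For the reverse inclusion, I would test the Jacobian at a point $\phi$ with $v_1,\ldots,v_{r-1}$ an isotropic basis and $v_r=\cdots=v_m=0$. The partials of all order-$(r+1)$ minors vanish at any rank-$(r-1)$ point, and for each pair $r\le i\le j\le m$ the partials of $x_{i,j}$ (including the case $i=j$) also vanish, since they involve only coordinates of $v_i$ or $v_j$. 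The resulting upper bound on the Jacobian rank is strictly less than the codimension, a check that reduces to a short binomial manipulation. The main subtlety---and the reason the statement is cleaner than in type $\rC_n$---is that the symmetric equation $x_{i,i}=q(v_i)$ loses all of its partials as soon as $v_i=0$, so a single vanishing vector already suffices to drop the Jacobian rank; in the antisymmetric case two simultaneous vanishing vectors were needed, producing the $Y_{m-2,m,n}$ singular locus there.
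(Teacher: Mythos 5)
Your proposal is correct and follows the paper's intended route: the paper gives a detailed proof only in type $\rC_n$ and says the $\rB_n$ case proceeds ``in the same way,'' and you have correctly supplied the necessary modifications---replacing $\IGr(r,V)$ by $\OGr(r,V)$, adjoining the diagonal equations $x_{i,i}=q(\bv_i)$, adjusting $\binom{r}{2}$ to $\binom{r+1}{2}$ throughout, and explaining why the $r=m$ singular locus is $Y_{m-1,m,n}$ rather than $Y_{m-2,m,n}$ (exactly the ``reason'' the paper flags in the remark following the theorem, namely that there is now an equation $x_{i,i}$ whose entire gradient dies as soon as $v_i=0$). Two cosmetic remarks: (i) the phrase ``a suitable orthogonal basis'' is slightly misleading---the test point $\phi\colon e_i\mapsto v_i$ must land in $Y_{m,m,n}$, so the vectors $v_1,\dots,v_m$ should be taken from the isotropic part of a hyperbolic/standard basis of $V$ (in which $q(v_i)=0$), and the nonvanishing of the partials of $x_{i,i}$ at $\phi$ then comes from nondegeneracy of the polarization $\omega$, not from $v_i$ itself being anisotropic; (ii) in the singular-locus check for $r<m$, the rows of the Jacobian that survive are the $x_{i,j}$ with $\min(i,j)\le r-1$ (not only those with both indices $<r$), so the count is $\binom{m+1}{2}-\binom{m-r+2}{2}=(r-1)(m+1)-\binom{r}{2}$, which you correctly compare against $(m-r)(2n+1-r)+\binom{r+1}{2}$.
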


Note that the singular locus of $Y_{m,m,n}$ is $Y_{m-1,m,n}$ instead of $Y_{m-2,m,n}$, in contrast with the symplectic case. This is because the functions $x_{i,j}$ are defined for $i=j$ as well as $i<j$.

The analogue of Proposition~\ref{prop:typeClwood} holds in this case with one change: the set $Q_{-1}$ is replaced with $Q_{1}$, and we have $\mu \in Q_{1}$ if and only if $\mu^\dagger \in Q_{-1}$. We state it for completeness. Again, we do not need the assumption on characteristic of $K$, but it simplifies the argument and statement.

\begin{proposition} \label{prop:lwood-typeB}
Assume that $K$ has characteristic $0$. The $\bS_\lambda(E)$-isotypic component of the Koszul complex ${\bf F}_\bullet$ gives an $\SO(V)$-equivariant resolution $0 \to C(\lambda)_\bullet \to \bS_{[\lambda]} V^* \to 0$ where
\[
C(\lambda)_i = \bigoplus_{\mu \vdash 2i,\ \mu \in Q_{1}} \bS_{\lambda / \mu} V^*.
\]
\end{proposition}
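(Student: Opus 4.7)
My plan is to mimic the argument of Proposition~\ref{prop:typeClwood} almost verbatim, with the symmetric form $q$ playing the role that the skew form $\omega$ played in type $\rC_n$. The key structural input is that $Y_{n,n,n}$ is a complete intersection cut out by the quadrics $x_{i,j}$ for $1\le i\le j\le n$. Since $\omega$ is symmetric and $q$ is a quadratic refinement of it, the $\binom{n+1}{2}$-dimensional space spanned by these quadrics is $\GL(E)\times\SO(V)$-equivariantly isomorphic to $\Sym^2 E$ (the $\SO(V)$-invariant line in $\Sym^2 V^*$ produced by $q$ tensored with $\Sym^2 E$, appearing inside $\Sym^2(E\otimes V^*)$). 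Therefore the minimal $\GL(E)\times\SO(V)$-equivariant free resolution of $K[Y_{n,n,n}]$ is the Koszul complex
\[
\bF_i \;=\; \bigwedge\nolimits^{i}(\Sym^2 E)\otimes A(-2i).
\]

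Next I would invoke the plethystic decomposition
\[
\bigwedge\nolimits^{i}(\Sym^2 E) \;=\; \bigoplus_{\mu\vdash 2i,\ \mu\in Q_1} \bS_\mu E,
\]
which is the transpose of the identity $\bigwedge^i(\bigwedge^2 E)=\bigoplus_{\mu\vdash 2i,\ \mu\in Q_{-1}}\bS_\mu E$ used in the symplectic case (cf. Macdonald, I.A.7 Ex.~5): indeed the relation $\bS_{\mu^\dagger}(\bigwedge^2 E)\cong\bS_\mu(\Sym^2 E)^{\text{(after sign twist)}}$ shows that the partitions occurring in $\bigwedge(\Sym^2 E)$ are precisely the transposes of those in $Q_{-1}$, which is exactly the defining property of $Q_1$. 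Each Schur functor occurs with multiplicity one, and grading by $|\mu|=2i$ matches the grading on the Koszul complex.

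With the decomposition in hand, the proof concludes just as in Proposition~\ref{prop:typeClwood}. Taking the $\bS_\lambda(E)$-isotypic component is exact (we are in characteristic $0$), so the resulting complex $C(\lambda)_\bullet$ is still a resolution of the $\bS_\lambda(E)$-isotypic component of $K[Y_{n,n,n}]$, which by the type $\rB$ analogue of \eqref{eqn:cauchy} is exactly $\bS_{[\lambda]}V^*$. Using the Cauchy decomposition $A\approx\bigoplus_\nu \bS_\nu E\otimes\bS_\nu V^*$ together with the standard identity
\[
\hom_{\GL(E)}\!\bigl(\bS_\lambda E,\ \bS_\mu E\otimes \bS_\nu E\bigr) \;\cong\; \hom(\bS_{\lambda/\mu}V^*,\bS_\nu V^*)\ \text{summed},
\]
(equivalently, \cite[Theorem~2.3.6]{weyman}), the $\bS_\lambda(E)$-isotypic component of $\bS_\mu E\otimes A$ is $\bS_{\lambda/\mu}V^*$. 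Assembling these contributions gives
\[
C(\lambda)_i \;=\; \bigoplus_{\mu\vdash 2i,\ \mu\in Q_1} \bS_{\lambda/\mu} V^*,
\]
as required. The main obstacle is really just citing the correct plethysm identity for $\bigwedge(\Sym^2 E)$ and verifying that the symmetry/skew-symmetry swap between types $\rB$ and $\rC$ is indeed accounted for precisely by the transpose operation $Q_{-1}\leftrightarrow Q_1$; once this is done, every other step is formally identical to the symplectic argument.
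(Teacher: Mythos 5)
Your proposal is correct and follows essentially the same route the paper intends: the paper itself gives no separate proof of Proposition~\ref{prop:lwood-typeB}, remarking only that the argument of Proposition~\ref{prop:typeClwood} goes through ``with one change: the set $Q_{-1}$ is replaced with $Q_{1}$.'' You supply exactly the details behind that remark: $Y_{n,n,n}$ is a complete intersection on the $\binom{n+1}{2}$ quadrics spanning a copy of $\Sym^2 E$ (rather than $\bigwedge^2 E$ as in type $\rC$), the Koszul complex has terms $\bigwedge^i(\Sym^2 E)\otimes A(-2i)$, the plethysm $\bigwedge^i(\Sym^2 E)=\bigoplus_{\mu\vdash 2i,\ \mu\in Q_1}\bS_\mu E$ is the transpose of the $\bigwedge^i(\bigwedge^2 E)$ decomposition used in type $\rC$ (which is precisely the defining relation $Q_1=Q_{-1}^\dagger$), and taking $\bS_\lambda(E)$-isotypic components via \cite[Theorem 2.3.6]{weyman} yields the stated $C(\lambda)_i$. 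One small stylistic quibble: the phrase ``$\bS_{\mu^\dagger}(\bigwedge^2 E)\cong\bS_\mu(\Sym^2 E)$ (after sign twist)'' is a somewhat loose way to state the transpose relation; citing the standard plethysm identity for $\bigwedge^\bullet(\Sym^2 E)$ directly (e.g.\ Macdonald I.A.7, or \cite[2.3.8]{weyman}) would be cleaner, but the conclusion drawn from it is correct.
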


\begin{remark} 
When $m > n$, the homology of the Koszul complex on the invariants $x_{i,j}$ is calculated in \cite{ssw} (this was first done in \cite{enright} but relies on more involved machinery).
\end{remark}
  

\subsection{Type ${\rm D}_n$.} \label{sec:typeDn}

There are some important differences between the results in the case
${\rm D}_n$ and the cases ${\rm B}_n$ and ${\rm C}_n$, but the proofs
are still similar.

Let $V$ be a vector space of dimension $2n$ over a field $K$ equipped with a nonsingular quadratic form $q$, which gives rise to an orthogonal form $\omega$ as in \S\ref{sec:typeBn}. We denote by $\bS_{[\lambda]} V^*$ the irreducible representation of $\bO(V)$ of highest weight $\lambda$. In this case $\lambda$ is a partition with at most $n$ parts. When $\ell(\lambda) = n$, this is the direct sum of two irreducible representations of $\SO(V)$, which we call $\bS_{[\lambda]^+} V^*$ and $\bS_{[\lambda]^-} V^*$. Their highest weights are $(\lambda_1,\ldots, \lambda_{n-1}, \lambda_n)$ and $(\lambda_1,\ldots ,\lambda_{n-1}, -\lambda_n)$. For convenience, we will sometimes use $\bS_{[\lambda]^\pm} V^*$ to mean $\bS_{[\lambda]} V^*$ when $\ell(\lambda) < n$.

Pick $r,m$ such that $0 \le r \le m \le n$. Let $E$ be a vector space
of dimension $m$. We denote by $Y_{r,m,n} \subset \hom(E,V)$ the
subvariety of linear maps for which the image of $E$ is contained in
an isotropic subspace of dimension $r$. When $r=m=n$, we call $Y$ the Littlewood variety. Also, let $A = K[\hom(E,V)]$. Fix a basis $e_1, \dots, e_m$ for $E$, so that we may represent an
element of $\hom(E,V)$ with an $m$-tuple of vectors in $V$, which we
write as $({\bf v}_1, \dots, {\bf v}_m)$. For $1 \le i < j \le m$, we
define the function $x_{i,j}$ via $({\bf v}_1, \dots, {\bf v}_m)
\mapsto \omega({\bf v}_i, {\bf v}_j)$. For $1 \le i \le m$, we define
the function $x_{i,i}$ via $({\bf v}_1, \dots, {\bf v}_m) \mapsto
q({\bf v}_i)$.

The orthogonal Grassmannian $\OGr(r,V)$ consists of all $r$-dimensional subspaces of $V$ on which $q$ is identically $0$, so is naturally a subvariety of the usual Grassmannian $\Gr(r,V)$. Let $\cR_r$ denote the restriction of the tautological rank $r$ subbundle of $\Gr(r,V)$ to $\OGr(r,V)$. When $r=n$, $\OGr(n,V)$ has 2 connected components, which we denote $\OGr^+(n,V)$ and $\OGr^-(n,V)$. They are called {\bf spinor varieties}. Both components are isomorphic to one another. The variety $\OGr(r,V)$ is smooth and is the zero locus of a regular section of $\Sym^2(\cR_r^*)$ (see for example \cite[Proposition 4.3.8]{weyman}), so
\[
\dim \OGr(r,V) = \dim \Gr(r,V) - \binom{r+1}{2} = \frac{r(4n - 3r + 1)}{2}.
\]

\begin{theorem}
\begin{compactenum}[\rm (1)]
\item The variety $Y_{r,m,n}$ has codimension $(m-r)(2n-r) + \binom{r+1}{2}$. When $r < n$, $Y_{r,m,n}$ is irreducible. When $r=n$, $Y_{n,n,n}$ has $2$ irreducible components $Y^+_{n,n,n}$ and $Y^-_{n,n,n}$, and $Y^+_{n,n,n} \cap Y^-_{n,n,n} = Y_{n-1,n,n}$ (scheme-theoretically). In all cases, the irreducible components of $Y_{r,m,n}$ have rational singularities, and in particular, are normal and Cohen--Macaulay.

\item As a $\GL(E) \times \bO(V)$-module, the coordinate ring of $Y_{r,m,n}$ has a filtration with associated graded given by
\[
K[Y_{r,m,n}] \approx \bigoplus_{\lambda,\ \ell(\lambda) \le r} \bS_\lambda E \otimes \bS_{[\lambda]} V^*.
\]
Furthermore, we have
\[
K[Y^+_{n,n,n}] \approx \bigoplus_{\lambda} \bS_\lambda E \otimes \bS_{[\lambda]^+} V^*, \quad K[Y^-_{n,n,n}] \approx \bigoplus_\lambda \bS_\lambda E \otimes \bS_{[\lambda]^-} V^*.
\]

\item The ideal defining $Y_{r,m,n}$ is minimally generated by the
  functions $x_{i,j}$ for $1 \le i \le j \le m$ together with the
  minors of order $r+1$ of the matrix $\phi$. In particular, $Y_{m,m,n}$ is a complete
  intersection in $\hom(E,V)$. The additional equations needed to cut
  out $Y_{n,n,n}^+$ in $Y_{n,n,n}$ are the $\frac{1}{2} \binom{2n}{2}$
  $n \times n$ minors of $\phi$ whose column indices form a set
  $\{i_1, \dots, i_n\}$ such that $\#(\{i_1, \dots, i_n\} \cap \{1,
  \dots, n\})$ is even. A similar statement holds for $Y_{n,n,n}^-$
  except that even is replaced with odd.

\item If $r>0$, the singular locus of $Y_{r,m,n}$ is $Y_{r-1,m,n}$. The singular locus of $Y_{n,n,n}^\pm$ is $Y_{n-2,n,n}$.
\end{compactenum}
\end{theorem}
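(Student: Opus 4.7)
The plan is to mimic the proofs in \S\ref{sec:typeCn} and \S\ref{sec:typeBn}, but with extra care for the case $r=n$ where $\OGr(n,V)$ has two connected components. For $r<n$, the variety $\OGr(r,V)$ is irreducible and smooth, and the total space of the bundle $\hom(E,\cR_r)=E^*\otimes\cR_r$ maps properly and birationally onto $Y_{r,m,n}$, giving irreducibility, the codimension count $\dim\hom(E,V)-\dim\OGr(r,V)-rm=(m-r)(2n-r)+\binom{r+1}{2}$, and rational singularities via Theorem~\ref{thm:geom-tech} applied to $\cV=\cO_V$, together with the same orthogonal-analogue sheaf-cohomology vanishing used in \S\ref{sec:typeBn}. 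For $r=n$, I would instead use the two spinor components $\OGr^\pm(n,V)$ and the bundles $\hom(E,\cR_n)$ on each: each is smooth, so the desingularization argument gives two irreducible components $Y^\pm_{n,n,n}$ with rational singularities. Part~(2) then follows exactly as in \S\ref{sec:typeCn}: apply \eqref{eqn:cauchy-filt} to $\Sym(E\otimes\cR_r^*)$ and the $D_n$ version of \eqref{eqn:typeC-sheaf-coh} (which uses Kempf vanishing on the flag variety of $\bO(V)$). For $Y^\pm_{n,n,n}$, one restricts to the corresponding spinor component, where only the Schur modules $\bS_{[\lambda]^\pm}V^*$ arise from $\rH^0(\OGr^\pm(n,V);\bS_\lambda\cR_n^*)$.

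For part~(3), I would first handle $Y_{m,m,n}$: the ideal $J$ generated by the $\binom{m+1}{2}$ functions $x_{i,j}$ ($1\le i\le j\le m$) defines a complete intersection in $\hom(E,V)$ of the correct codimension, hence a Cohen--Macaulay scheme. To show $J$ is radical, apply the Jacobian criterion at the map $\phi\colon e_i\mapsto v_i$ just as in the $C_n$ case, using a standard basis diagonalizing $q$ and $\omega$; the rank of the Jacobian of the $x_{i,j}$ at $\phi$ is easily verified to equal $\binom{m+1}{2}$. Passing from $Y_{m,m,n}$ to $Y_{r,m,n}$, one invokes the good filtration and Cartan product argument of \S\ref{sec:typeCn}: modding out by the order-$(r+1)$ minors of the generic matrix $\phi$ kills exactly those $A_\lambda$ with $\ell(\lambda)>r$, and the resulting decomposition matches the one obtained from the desingularization, so the ideal is radical.

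The main obstacle is identifying the additional minors cutting out $Y^+_{n,n,n}$ inside $Y_{n,n,n}$ and proving the scheme-theoretic intersection formula $Y^+\cap Y^-=Y_{n-1,n,n}$. The plan is as follows. An isotropic $n$-subspace lies in $\OGr^+(n,V)$ (resp.\ $\OGr^-(n,V)$) according to the parity of the dimension of its intersection with a fixed reference Lagrangian, which in Pl\"ucker coordinates distinguishes the $\binom{2n}{n}/2$ ``even'' versus ``odd'' coordinates on $\Gr(n,V)$. Pulling these back via $\phi$ gives exactly the specified $n\times n$ minors. Thus the ideal generated by the ``even-parity'' $n\times n$ minors vanishes on $Y^+_{n,n,n}$, and similarly for $Y^-_{n,n,n}$. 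To get scheme-theoretic equality $Y^+\cap Y^-=Y_{n-1,n,n}$, one notes that the sum of the two parity ideals equals the ideal of all $n\times n$ minors of $\phi$ (since any minor can be written as an even-parity minor plus an odd-parity one after a sign change in a column), and modulo $J$ this is precisely the ideal of $Y_{n-1,n,n}$ by part~(2) applied at level $r=n-1$; since both $K[Y^+]$ and $K[Y^-]$ are Cohen--Macaulay of the same dimension with reduced intersection of the right dimension, the scheme-theoretic claim follows.

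For part~(4), use the Jacobian criterion exactly as in the $B_n/C_n$ case: at a point $\psi\colon e_i\mapsto v_i$ for $i\le r-1$, all order-$r$ minors of $\phi$ vanish to first order, so the Jacobian is controlled by the $x_{i,j}$; counting shows its rank is strictly less than $\codim Y_{r,m,n}$ exactly on $Y_{r-1,m,n}$. For $Y^\pm_{n,n,n}$, the additional parity minors vanish to second order at points of $Y_{n-2,n,n}$, while at a generic point of $Y_{n-1,n,n}\subset Y_{n,n,n}$ a single parity minor has nonvanishing derivative and smooths out the branch, so $Y^\pm_{n,n,n}$ is smooth along $Y_{n-1,n,n}\setminus Y_{n-2,n,n}$; hence the singular locus is $Y_{n-2,n,n}$ as claimed.
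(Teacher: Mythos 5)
Your overall strategy is the same as the paper's (desingularize by the bundle $\hom(E,\cR_r)$ or $\hom(E,\cR_n^\pm)$, compute cohomology via Kempf vanishing and the Cauchy filtration, prove radicality by comparing dimensions, and use the Jacobian criterion for the singular locus), and parts (1), (2), and the $r<n$ cases of (3) and (4) are handled correctly.

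However, there are two gaps in the handling of the components $Y^\pm_{n,n,n}$ in part (3). First, the parenthetical ``any minor can be written as an even-parity minor plus an odd-parity one after a sign change in a column'' is confused: each $n\times n$ minor already lies in exactly one parity class, so the sum of the two parity ideals trivially contains all the minors; no decomposition is needed. Second, and more seriously, you only verify that the even-parity minors \emph{vanish} on $Y^+_{n,n,n}$, i.e., that $J + (\text{even-parity minors}) \subseteq I(Y^+_{n,n,n})$, but part (3) requires the reverse containment as well. The paper obtains this by noting that the parity minors span the $\GL(E)\times\bO(V)$-subrepresentation $\bigwedge^n E\otimes\bS_{[1^n]^\pm}V^*$ of $K[Y_{n,n,n}]$ and invoking the Cartan product (the graded $G$-algebra structure from \S\ref{sec:gradedGalg}) to conclude that the ideal they generate modulo $J$ sweeps out every $\bS_\lambda E\otimes\bS_{[\lambda]^\pm}V^*$ with $\ell(\lambda)=n$ --- which is precisely $I(Y^\pm_{n,n,n})/J$, by the coordinate ring decomposition in part (2). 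Without this equality, your appeal to ``both $K[Y^+]$ and $K[Y^-]$ are Cohen--Macaulay $\dots$ with reduced intersection of the right dimension'' does not establish scheme-theoretic equality $Y^+\cap Y^- = Y_{n-1,n,n}$; Cohen--Macaulayness plus a dimension count does not control nilpotents in the sum of ideals. The cleaner route (implicit in the paper) is the Mayer--Vietoris sequence
\[
0 \to K[Y^+\cup Y^-] \to K[Y^+]\oplus K[Y^-] \to K[Y^+\cap Y^-] \to 0,
\]
together with the explicit coordinate ring decompositions of $K[Y_{n,n,n}]=K[Y^+\cup Y^-]$ and of $K[Y^\pm]$: subtracting shows the scheme-theoretic intersection has exactly the coordinate ring $\bigoplus_{\ell(\lambda)\le n-1}\bS_\lambda E\otimes\bS_{[\lambda]}V^*=K[Y_{n-1,n,n}]$.

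A smaller point: for part (4) you argue only that $Y^\pm_{n,n,n}$ is smooth along $Y_{n-1,n,n}\setminus Y_{n-2,n,n}$, giving the containment $\mathrm{Sing}(Y^\pm_{n,n,n})\subseteq Y_{n-2,n,n}$. You should also verify, as the paper does, that the Jacobian rank actually drops at a rank $n-2$ matrix $\psi'$: all first partials of $n\times n$ minors vanish at $\psi'$, while the $x_{i,j}$ alone cannot reach the required rank, and then one uses that an element of $\bO(V)\setminus\SO(V)$ exchanges $Y^+$ and $Y^-$ to cover both parities.
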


\begin{proof} 
The proof is similar to the last two cases, except a few points. 

First, the coordinate rings can be calculated using Theorem~\ref{thm:geom-tech} as we have done before. The proposed $n \times n$ minors generate the subspaces $\bigwedge^n E \otimes \bS_{[1^n]^+} V^*$ and $\bigwedge^n E \otimes \bS_{[1^n]^-} V^*$, respectively. Via the Cartan product, we see that the ideal generated by the first set of equations contains all $\bS_\lambda E \otimes \bS_{[\lambda]^+} V^*$ for $\ell(\lambda) = n$, and similarly for the second set. Hence these define radical ideals.

  The calculation of the singular locus of $Y_{r,m,n}$ proceeds as
  before, so we omit it. Now we consider $Y_{n,n,n}^\pm$. Fix a basis
  $\{v_1, \dots, v_{2n}\}$ for $V$ so that $q(\sum_{i=1}^{2n} c_iv_i)
  = \sum_{i=1}^n c_ic_{i+n}$.  Let $\psi$ be the matrix $e_i \mapsto
  v_i$ for $i=1,\dots,n-1$ and $e_n \mapsto 0$. Also let $u_{i,j}$ be
  the function on $\hom(E,V)$ that picks out the coefficient of $v_i$
  in the image of $e_j$.

  The functions $x_{i,j}$ with $1 \le i \le n-1$ and $i \le j$ give
  $\binom{n+1}{2} - 1$ linearly independent rows in the Jacobian
  matrix at $\psi$. Let $\Delta$ be the $n \times n$ minor with column
  indices $\{1, \dots, n\}$. Then $\frac{\partial \Delta}{\partial
    u_{n,n}}(\psi) = 1$, and $u_{n,n}$ is not involved in any of the
  nonzero partial derivatives of the functions $x_{i,j}$. Hence the
  Jacobian has maximal rank at $\psi$. On the other hand, for a rank
  $n-2$ matrix $\psi'$, the partial derivatives of all $n \times n$
  minors are 0 at $\psi'$, so the rank of the Jacobian drops. If $n$
  is even, this shows that the singular locus of $Y_{n,n,n}^+$ is
  $Y_{n-2,n,n}$. If $n$ is odd, we get the statement for
  $Y_{n,n,n}^-$. In either case, both components are exchanged under
  the action of an element in ${\bf O}(V) \setminus \SO(V)$, so we are
  done.
\end{proof}
  
A version of Proposition~\ref{prop:lwood-typeB} also holds. We should just make sure to use the full orthogonal group $\bO(V)$ instead of just the special orthogonal group $\SO(V)$ for a cleaner statement.

\begin{proposition} 
Assume that $K$ has characteristic $0$. The $\bS_\lambda(E)$-isotypic component of the Koszul complex ${\bf F}_\bullet$ gives an $\bO(V)$-equivariant resolution $0 \to C(\lambda)_\bullet \to \bS_{[\lambda]} V^* \to 0$ where
\[
C(\lambda)_i = \bigoplus_{\mu \vdash 2i,\ \mu \in Q_{1}} \bS_{\lambda / \mu} V^*.
\]
\end{proposition}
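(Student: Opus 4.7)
The plan is to mimic the proof of Proposition~\ref{prop:typeClwood} (symplectic case) verbatim, with the single change that the space spanned by the defining quadrics is $\Sym^2 E$ rather than $\bigwedge^2 E$; this is the combinatorial reason the set $Q_{-1}$ gets replaced by $Q_1$. Concretely, by part~(3) of the theorem, $Y_{n,n,n}$ is a complete intersection cut out by the $\binom{n+1}{2}$ functions $x_{i,j}$ with $1 \le i \le j \le n$. The span of these functions is $\GL(E)\times \bO(V)$-stable, carries the trivial $\bO(V)$-character, and is isomorphic to $\Sym^2 E$ as a $\GL(E)$-module. The minimal free resolution of $K[Y_{n,n,n}]$ is therefore the Koszul complex
\[
\bF_i \;=\; \bigwedge^i (\Sym^2 E) \otimes A(-2i).
\]

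Since $\mathrm{char}\, K = 0$, passage to the $\bS_\lambda E$-isotypic component is an exact functor on rational $\GL(E)\times \bO(V)$-modules, so applying it to $\bF_\bullet$ yields an $\bO(V)$-equivariant resolution of the $\bS_\lambda E$-isotypic component of $K[Y_{n,n,n}]$. By part~(2) of the theorem, that component is $\bS_{[\lambda]} V^*$ when $\ell(\lambda) \le n$ (and both sides vanish otherwise). To identify the terms, I would combine two decompositions: (i) the plethysm $\bigwedge^i \Sym^2 E \cong \bigoplus_{\nu \vdash 2i,\ \nu \in Q_1} \bS_\nu E$, which is dual under transposition to the decomposition of $\bigwedge^i \bigwedge^2 E$ used in the symplectic case (this is the defining property of $Q_1$, c.f.\ \cite[I.A.7]{macdonald}); and (ii) the Cauchy decomposition $A = \Sym(E \otimes V^*) \approx \bigoplus_\mu \bS_\mu E \otimes \bS_\mu V^*$. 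The $\bS_\lambda E$-isotypic component of $\bS_\nu E \otimes \bS_\mu E$ has multiplicity $c^\lambda_{\nu,\mu}$, so the $\bS_\lambda E$-isotypic of $\bS_\nu E \otimes A$ is
\[
\bigoplus_\mu c^\lambda_{\nu,\mu}\, \bS_\mu V^* \;=\; \bS_{\lambda/\nu} V^*
\]
by the definition of the skew Schur functor (compare \cite[Theorem~2.3.6]{weyman}). Summing over $\nu \in Q_1$ with $|\nu| = 2i$ produces the claimed formula for $C(\lambda)_i$.

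There is essentially no hard step; the only point to watch is the $\bO(V)$-equivariance when $\ell(\lambda) = n$, where $\bS_{[\lambda]} V^*$ splits as $\bS_{[\lambda]^+} V^* \oplus \bS_{[\lambda]^-} V^*$ under $\SO(V)$. Since the defining equations $x_{i,j}$ are $\bO(V)$-invariant and the coordinate-ring decomposition in part~(2) of the theorem is stated for the full orthogonal group, the whole construction is $\bO(V)$-equivariant with no extra work, which is why the statement is phrased for $\bO(V)$ rather than $\SO(V)$. This also explains why the two $\SO(V)$-components $\bS_{[\lambda]^\pm} V^*$ cannot be separated by the Littlewood complex: they sit in the same orbit of $\bO(V)$ acting on the single coordinate ring $K[Y_{n,n,n}]$, and one would need to pass to $K[Y_{n,n,n}^\pm]$ (not a complete intersection) to isolate them.
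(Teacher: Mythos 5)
Your proof is correct and takes essentially the same approach as the paper: the paper proves the analogous symplectic statement (Proposition~\ref{prop:typeClwood}) via the Koszul complex on $\bigwedge^2 E$ and the Cauchy decomposition of $A$, then asserts the type~B and type~D statements by remarking that $Q_{-1}$ is replaced by $Q_1$ and that one should use $\bO(V)$ rather than $\SO(V)$, which is exactly the adaptation you carry out. Your concluding remark about why the $\bS_{[\lambda]^\pm} V^*$ components cannot be separated is a nice clarification of why the paper phrases the result for $\bO(V)$.
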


\begin{remark} 
When $m > n$, the homology of the Koszul complex on the invariants $x_{i,j}$ is calculated in \cite{ssw}  (this was first done in \cite{enright} but relies on more involved machinery).
\end{remark}


\section{Littlewood spinor modules.} \label{sec:lwoodspin}

In this section we work over a field $K$ of characteristic $0$.

The content of this section is related to Koike's work on developing an analogue of Weyl's construction of spinor modules \cite{koikespin}. While the construction and method seems to be new, the resulting combinatorial formulas were known to Littlewood (see Remarks~\ref{rmk:inv-evenspin} and \ref{rmk:inv-oddspin}).

\subsection{$\rD_n$ spinor modules.}

Let $V$ be a $2n$-dimensional orthogonal space. Let $X_{\pm} = \OGr^{\pm}(n, V)$ be a spinor variety from \S\ref{sec:typeDn}, i.e., a connected component of the space of $n$-dimensional isotropic subspaces of $V$. Let $\cL_\pm$ be the ample generator of $\Pic(X_\pm)$ so that $\cL_\pm^2 = \det \cR_\pm^*$, where $\cR_\pm$ is the rank $n$ tautological subbundle on $X_\pm$. Set $\delta_+ = (1^n)/2$ and $\delta_- = (1^{n-1},-1)/2$. Let $\Delta_{\pm} = V_{\delta_{\pm}}$ be the two half-spinor modules for $\SO(V)$ and set $\Delta = \Delta_+ \oplus \Delta_-$. For our coordinate system on weights, see \S\ref{sec:BWB}.

Let $E$ be an $n$-dimensional vector space. We define the Littlewood half-spinor modules $M_\pm$ to be the pushforwards of $\Sym(E \otimes \cR_\pm^*) \otimes \cL_{\pm}$ to $\hom(E,V)$. It carries an action of $\Spin(V)$ that does not descend to an action of $\SO(V)$ and we have
\[
M_\pm = \rH^0(\OGr^{\pm}(n,V); \bigoplus_\lambda \bS_\lambda E \otimes \bS_\lambda \cR_\pm^* \otimes \cL_\pm) = \bigoplus_\lambda \bS_\lambda E \otimes V_{\lambda + \delta_{\pm}}.
\]
The first equality is \eqref{eqn:cauchy-filt}, and the second equality follows from \S\ref{sec:schurmodules} and an adaptation of the argument used for \eqref{eqn:typeC-sheaf-coh}.

Set $A = \Sym(E \otimes V)$.

\begin{lemma} \label{lemma:dnspincoh}
Let $\lambda$ be a partition with $\ell(\lambda) \le n$ and $\lambda_1 \le n$. Then $\bS_\lambda \cR_\pm \otimes \cL_\pm$ has nonzero cohomology if and only if $\lambda = \lambda^\dagger$. The non-vanishing cohomology is
\begin{align*}
\rH^{(|\lambda|- \rank(\lambda))/2}(X_+; \bS_\lambda \cR_+ \otimes \cL_+) &= 
\begin{cases}
\Delta_+ & \text{if $\rank(\lambda)$ is even}\\
\Delta_- & \text{if $\rank(\lambda)$ is odd}
\end{cases}\\
\rH^{(|\lambda|- \rank(\lambda))/2}(X_-; \bS_\lambda \cR_- \otimes \cL_-) &= 
\begin{cases}
\Delta_- & \text{if $\rank(\lambda)$ is even}\\
\Delta_+ & \text{if $\rank(\lambda)$ is odd}.
\end{cases}
\end{align*}
\end{lemma}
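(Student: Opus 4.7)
The plan is to apply Borel--Weil--Bott on $G/B$ after pulling back through $\pi \colon G/B \to X_\pm$. First I use the classical identity $\bS_\lambda W \cong \bS_{\tilde\lambda} W^* \otimes (\det W)^{\lambda_1}$, with $\tilde\lambda_i = \lambda_1 - \lambda_{n+1-i}$, applied to the rank $n$ bundle $\cR_\pm$. Combined with $\det \cR_\pm = \cL_\pm^{-2}$, this gives
$$\bS_\lambda \cR_\pm \otimes \cL_\pm \;\cong\; \bS_{\tilde\lambda} \cR_\pm^* \otimes \cL_\pm^{1-2\lambda_1}.$$
The same relative Borel--Weil argument used to identify $M_\pm$ shows the right-hand side equals $\pi_* \cL_\nu$ for $\nu := \tilde\lambda + (1-2\lambda_1)\delta_\pm$, with vanishing higher direct images since $\nu$ is dominant for the Levi of $P$. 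Hence $\rH^*(X_\pm; \bS_\lambda \cR_\pm \otimes \cL_\pm) = \rH^*(G/B; \cL_\nu)$.

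A direct computation simplifies $\nu$ to $(\tfrac12 - \lambda_n, \tfrac12 - \lambda_{n-1}, \dots, \tfrac12 - \lambda_1)$ in the $+$ case (the $-$ case differs only by the sign of the last entry). Setting $b_p := (p - \tfrac12) - \lambda_p$, the entries of $\nu + \rho$, read left-to-right, are $b_n, b_{n-1}, \dots, b_1$. Since $b_{p+1} - b_p = 1 - (\lambda_{p+1} - \lambda_p) \ge 1$, the sequence is strictly increasing, so no reflection hyperplane of type $\eps_i - \eps_j$ is hit; the only potential source of Bott vanishing is $b_p + b_q = 0$ for some $p \ne q$. In terms of the $\beta$-numbers $\beta_p := \lambda_p + n - p$ this becomes $\beta_p + \beta_q = 2n - 1$, and under $\lambda_1 \le n$ the $n$-element set $\{\beta_p\}$ lies in $\{0, 1, \dots, 2n-1\}$. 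A standard $\beta$-set manipulation shows that $\{\beta_p\}$ is transverse to the $n$ pairs $\{k, 2n-1-k\}$ if and only if $\lambda = \lambda^\dagger$, establishing the ``if and only if'' assertion.

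Now assume $\lambda = \lambda^\dagger$ has Frobenius coordinates $a_1 > \dots > a_r \ge 0$ with $r = \rank(\lambda)$. For $p \le r$, $\lambda_p = a_p + p$ yields $b_p = -\tfrac12 - a_p < 0$, while $b_p > 0$ for $p > r$. Using the transversality of $\{\beta_p\}$ one gets
$$\{-b_p : p \le r\} \cup \{b_p : p > r\} = \{\tfrac12, \tfrac32, \dots, n-\tfrac12\},$$
so after flipping the $r$ negative signs and sorting we obtain $(n-\tfrac12, n-\tfrac32, \dots, \tfrac12)$. Because the $\rD_n$ Weyl group requires an even number of sign flips, the parity of $r$ determines whether the smallest coordinate ends up as $+\tfrac12$ or $-\tfrac12$: when $r$ is even, $w(\nu+\rho) = \rho + \delta_+$ (yielding $\Delta_+$); when $r$ is odd, $w(\nu+\rho) = \rho + \delta_-$ (yielding $\Delta_-$). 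The analysis on $X_-$ is identical with the roles of $\delta_\pm$ swapped.

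Finally I compute $\ell(w) = \#\{\alpha > 0 : \langle \nu + \rho, \alpha \rangle < 0\}$. Since the $c_i$ are strictly decreasing, only roots $\eps_i + \eps_j$ contribute, giving $\ell(w) = \#\{p > q : b_p + b_q < 0\}$. This splits into $\binom{r}{2}$ pairs with both indices $\le r$, zero pairs with both indices $> r$, and mixed pairs in which, for each $q \le r$, one counts the $p > r$ with $b_p < |b_q| = \tfrac12 + a_q$. Using the identification $\{b_p : p > r\} = \{\tfrac12, \tfrac32, \dots, n-\tfrac12\} \setminus \{\tfrac12 + a_{q'}\}$, this count is $a_q - (r-q)$, and summing yields $\sum_{q=1}^r a_q - \binom{r}{2}$ for the mixed contribution. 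Adding the three pieces gives $\ell(w) = \sum_{i=1}^r a_i = (|\lambda| - \rank(\lambda))/2$, as required. The main technical obstacle is the $\beta$-set equivalence between Bott regularity ($\beta_p + \beta_q \ne 2n-1$) and self-conjugacy; once this is secured, the remaining computations are a routine unwinding of Bott's algorithm plus Frobenius bookkeeping.
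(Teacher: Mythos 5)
Your proof is correct, and it takes a genuinely different route from the paper's. Both start from the same weight $\nu + \rho$ with entries $b_p = (p - \tfrac12) - \lambda_p$ (the paper writes this as $(-\lambda_n, \dots, -\lambda_1) + \rho + \delta_+$), but the paper handles the two key points by an inductive hook-removal argument: it shows directly that a collision $\lambda_i + \lambda_j + 1 = i + j$ exists iff $\lambda \ne \lambda^\dagger$, and it runs Bott's algorithm step by step, peeling off two principal hooks at a time and keeping a running tally of simple reflections. You instead recast the singularity condition as the statement that the $\beta$-set $\{\lambda_p + n - p\}$ is a transversal to the $n$ pairs $\{k, 2n-1-k\}$ --- which by the standard abacus description of conjugation is exactly $\lambda = \lambda^\dagger$ --- and then you compute $\ell(w)$ in closed form by counting positive roots $\eps_i + \eps_j$ with negative pairing, split along the sign pattern of $b_p$ and organized by Frobenius coordinates. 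This buys you a cleaner, non-recursive argument and makes the appearance of $(|\lambda| - \rank\lambda)/2 = \sum a_i$ transparent, whereas the paper's version is more elementary but more laborious. Two small points to tidy: you write ``the sequence is strictly increasing'' and two lines later refer to ``the $c_i$ are strictly decreasing'' without ever introducing $c_i$ (presumably $c_i = (\nu+\rho)_i = b_{n+1-i}$, which is indeed decreasing in $i$); and the $\beta$-set transversality $\Leftrightarrow$ self-conjugacy equivalence, while standard, is exactly the step the paper chooses to prove, so you may want to at least cite it (it follows from the fact that the complement of $\{\lambda_p + n - p\}$ in $\{0,\dots,2n-1\}$ equals $\{2n - 1 - (\lambda^\dagger_j + n - j)\}$). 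Finally, the assertion that ``the analysis on $X_-$ is identical'' is slightly glib --- flipping the sign of the last coordinate swaps the roles of $\eps_i \pm \eps_n$ in the inversion count --- but a quick check shows the total count is unchanged, so the conclusion stands.
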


\begin{proof}
We will focus on the statement for $X_+$, as the statement for $X_-$ is similar.

We have $\rho = (n-1, n-2, \dots, 1, 0)$ as defined in \S\ref{sec:BWB}. We need to check if the sequence
\[
(-\lambda_n, \dots, -\lambda_1) + \rho + \delta_+ = (-\lambda_n + \frac{1}{2} + n-1, \dots, -\lambda_1 + \frac{1}{2})
\]
lies on a reflection hyperplane of the Weyl group of type $\rD_n$, i.e., if any two entries are equal up to sign (see \cite[\S 12.1]{humphreys}). Suppose that it does lie on a reflection hyperplane. Since these entries are strictly decreasing, we necessarily have two entries that are negatives of one another, i.e., we have 
\begin{align} \label{eqn:Dnviolation}
\lambda_i + \lambda_j + 1 = i + j
\end{align}
for some $i<j$. We wish to show that this implies that $\lambda \ne \lambda^\dagger$. Without loss of generality, we may assume $i=1$ by removing the hooks for the first $i-1$ diagonal boxes (this does not affect \eqref{eqn:Dnviolation} nor the property of being self-transpose). Then we have a partition such that $\lambda_1 + \lambda_j = j$ for some $j$. If $\lambda_j = 0$, then $\lambda_1 = j$ but $\lambda^\dagger_1 < j$, so $\lambda$ is not self-transpose. Otherwise, if $\lambda_j > 0$, then we must have $\lambda_1, \lambda_j \le 1$, so in particular, $\lambda = (1,1)$ and again is not self-transpose.

Conversely, we have to show that if $\lambda$ is not self-transpose, then \eqref{eqn:Dnviolation} holds for some $1 \le i<j \le n$. Let $i$ be minimal such that $\lambda_i \ne \lambda^\dagger_i$. Again, by removing hooks for the first $i-1$ diagonals, we may assume that $i=1$. If $\lambda_1 > \lambda^\dagger_1$, then we take $j=\lambda_1$ and \eqref{eqn:Dnviolation} holds. Otherwise, we have $\lambda_1 < \lambda^\dagger_1$. If $\lambda_{\lambda_1 + 1} = 1$, then we can take $j = \lambda_1 + 1$. Otherwise, we have $\lambda_{\lambda_1 + 1} > 1$, which implies that $\lambda^\dagger_2 > \lambda_1$. Since $\lambda_2 \le \lambda_1$, if we remove the hook on the first diagonal, we are left with another partition $\mu$ which is not self-dual, and we can replace $\lambda$ by $\mu$ from the point of view of showing that \eqref{eqn:Dnviolation} holds. Hence we are done by induction on the rank of $\lambda$.

Now suppose that $\lambda = \lambda^\dagger$. We need to apply Bott's algorithm. The shifted action of the simple reflections are as follows: for $1 \le i < n$, we have $s_i \colon (\dots, x_i, x_{i+1}, \dots) \mapsto (\dots, x_{i+1}-1, x_i + 1, \dots)$ and $s_n \colon (\dots, x_{n-1}, x_n) \mapsto (\dots, -x_n -1, -x_{n-1} - 1)$. A sequence is dominant in type D if and only if $x_1 \ge \cdots \ge x_{n-1} \ge |x_n|$.

Now we apply Bott's algorithm to $(-\lambda_n + \frac{1}{2}, \dots, -\lambda_1 + \frac{1}{2})$. This weight is dominant if and only if $1 \ge \lambda_1 + \lambda_2$, in which case the lemma is clear. Otherwise, we apply $s_n$ to get $(-\lambda_n + \frac{1}{2}, \dots, \lambda_1 - \frac{3}{2}, \lambda_2 - \frac{3}{2})$. Since $\lambda_1 + \lambda_2 \ge 3$, we get $\lambda_1 - \frac{3}{2} \ge |\lambda_2 - \frac{3}{2}|$. Now we apply $s_{n-2}, s_{n-3}, \dots, s_{n-\lambda_1+1}$ in order. If $\lambda_2 = 1$, then in particular we have $\lambda_3 = \cdots = \lambda_{\lambda_1} = 1$, so the result is $(\frac{1}{2}, \dots, \frac{1}{2}, -\frac{1}{2})$, and we are done. In this case, we used $\lambda_1 - 1 = (|\lambda| - \rank(\lambda))/2$ reflections, so the lemma holds.

Otherwise, we have $\lambda_2 > 1$. Then we apply $s_{n-1}, \dots, s_{n-\lambda_2+2}$ in order. The end result is $-\mu^{\rm op} + \delta_+$ where $\mu$ is the partition obtained from $\lambda$ by removing the hooks at the first two diagonals. In total, we used $\lambda_1 + \lambda_2 - 3$ reflections. We are done by induction since
\[
\frac{|\lambda| - \rank(\lambda)}{2} = \lambda_1 + \lambda_2 - 3 + \frac{|\mu| - \rank(\mu)}{2}. \qedhere
\]
\end{proof}

\begin{proposition}
The minimal free resolution $\bF_\bullet$ of $M_+$ is given by
\[
\bF_i = \bigoplus_{\substack{\lambda = \lambda^\dagger\\ i = (|\lambda| + \rank(\lambda))/2}} \bS_\lambda E \otimes \Delta_{\rank(\lambda)} \otimes A(-|\lambda|)
\]
where $\Delta_d = \Delta_+$ if $d$ is even and $\Delta_d = \Delta_-$ if $d$ is odd. An analogous statement for $M_-$ holds where the definition of $\Delta_d$ is opposite.
\end{proposition}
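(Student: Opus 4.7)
The plan is to apply the geometric technique of \S\ref{sec:geomtech} with ambient vector space $\hom(E,V)$, homogeneous space $X_+ = \OGr^+(n,V)$, and twisting bundle $\cV = \cL_+$. Using the quadratic form to identify $V/\cR_+ \cong \cR_+^*$, the tautological sequence gives $\cS = E^* \otimes \cR_+$ and $\cT = E^* \otimes \cR_+^*$, so that $\eta = E \otimes \cR_+^*$ and $\xi = E \otimes \cR_+$. The total space of $\cS$ maps birationally onto $Y^+_{n,n,n}$, and by construction $M_+ = \rH^0(X_+;\Sym(\eta)\otimes\cL_+)$.

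First I would verify the hypotheses of Theorem~\ref{thm:geom-tech}. Cauchy's formula gives $\Sym(E\otimes\cR_+^*) \approx \bigoplus_\lambda \bS_\lambda E \otimes \bS_\lambda \cR_+^*$, and twisting by $\cL_+$ turns $\bS_\lambda \cR_+^*\otimes \cL_+$ into the line bundle on $G/B$ associated with the weight $\lambda + \delta_+$, which is dominant for any partition $\lambda$. Kempf vanishing (\S\ref{sec:schurmodules}) then kills the higher cohomology, so the higher direct images vanish and the resulting complex $\bF_\bullet$ resolves $M_+$.

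Next I would compute the terms. By Theorem~\ref{thm:geom-tech},
\[
\bF_i = \bigoplus_{j \ge 0} \rH^j\bigl(X_+;\, \bigwedge^{i+j}(E\otimes\cR_+)\otimes\cL_+\bigr) \otimes_K A(-i-j).
\]
Applying Cauchy again,
\[
\bigwedge^{i+j}(E\otimes\cR_+) = \bigoplus_{\lambda \vdash i+j} \bS_\lambda E \otimes \bS_{\lambda^\dagger}\cR_+,
\]
so
\[
\bF_i = \bigoplus_\lambda \bS_\lambda E \otimes \rH^{|\lambda|-i}\bigl(X_+;\, \bS_{\lambda^\dagger}\cR_+\otimes \cL_+\bigr) \otimes A(-|\lambda|).
\]
Now I invoke Lemma~\ref{lemma:dnspincoh}: the inner cohomology is nonzero only when $\lambda^\dagger = \lambda$, in which case it lies in cohomological degree $(|\lambda|-\rank(\lambda))/2$ and equals $\Delta_{\rank(\lambda)}$ (with the parity convention stated in the proposition). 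Setting $|\lambda|-i = (|\lambda|-\rank(\lambda))/2$ yields the index condition $i = (|\lambda|+\rank(\lambda))/2$ in the formula.

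Finally I would address minimality. The differentials supplied by Theorem~\ref{thm:geom-tech} have internal degree $0$, so a nonzero $A$-linear component from $\bS_\lambda E \otimes \Delta_{\rank(\lambda)} \otimes A(-|\lambda|)$ in $\bF_i$ to a summand in $\bF_{i-1}$ indexed by $\lambda'$ with the same internal degree would require $|\lambda|=|\lambda'|$; combined with the constraints $i=(|\lambda|+\rank(\lambda))/2$ and $i-1=(|\lambda'|+\rank(\lambda'))/2$ this forces $\rank(\lambda')=\rank(\lambda)-2$, hence $\lambda\ne\lambda'$, hence no nonzero $\GL(E)$-equivariant constant map can exist between the $\bS_\lambda E$ and $\bS_{\lambda'} E$ factors. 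Thus the resolution is already minimal. The main bookkeeping obstacle is the parity tracking in Step~3: one must confirm that the change in $(|\lambda|-\rank(\lambda))/2$ upon replacing $\cL_+$ by $\cL_-$ (or working on $X_-$) correctly swaps $\Delta_+ \leftrightarrow \Delta_-$, which is exactly the ``opposite convention'' statement for $M_-$.
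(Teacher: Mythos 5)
Your argument follows the paper's proof: apply Theorem~\ref{thm:geom-tech} with $\xi = E\otimes\cR_+^\perp = E\otimes\cR_+$, decompose the exterior powers via Cauchy, and read off the cohomology from Lemma~\ref{lemma:dnspincoh}. The extra minimality check at the end is harmless but unnecessary, since Theorem~\ref{thm:geom-tech} already asserts that the differentials it produces are minimal.
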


\begin{proof}
In the notation of \S\ref{sec:geomtech}, $\xi = E \otimes \cR^\perp_+$ (orthogonal complement with respect to the quadratic form on $V \times X_+$) and so 
\[
\bF_i = \bigoplus_{j \ge 0} \rH^j(\OGr^+(n,V); \bigwedge^{i+j}(E \otimes \cR^\perp_+)) \otimes A(-i-j)
\]
by Theorem~\ref{thm:geom-tech}. Now use that $\bigwedge^d(E \otimes \cR^\perp_+) = \bigoplus_{|\lambda|=d} \bS_{\lambda^\dagger} E \otimes \bS_\lambda (\cR^\perp_+)$ \cite[Theorem 2.3.2]{weyman}. We have $\cR^\perp_+ = \cR_+$, so we can use Lemma~\ref{lemma:dnspincoh} to calculate the cohomology above.
\end{proof}

We define the {\bf Littlewood spinor module} by $M = M_+ \oplus M_-$.

\begin{corollary} \label{cor:dnspinres}
The minimal free resolution $\bF_\bullet$ of $M$ is given by
\[
\bF_i = \bigoplus_{\substack{\lambda = \lambda^\dagger\\ i = (|\lambda| + \rank(\lambda))/2}} \bS_\lambda E \otimes \Delta \otimes A(-|\lambda|).
\]
\end{corollary}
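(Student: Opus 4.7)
The plan is to deduce this corollary directly from the preceding proposition, where the minimal free resolutions of $M_+$ and $M_-$ are described separately. Since $M = M_+ \oplus M_-$ as $\Spin(V)$-equivariant $A$-modules, the minimal free resolution of $M$ is simply the direct sum $\bF_\bullet(M_+) \oplus \bF_\bullet(M_-)$. So I would just carry out the bookkeeping: for each self-conjugate partition $\lambda$ with $\ell(\lambda) \le n$, $\lambda_1 \le n$, and $(|\lambda| + \rank(\lambda))/2 = i$, the summand $\bS_\lambda E \otimes A(-|\lambda|)$ appears once in $\bF_i(M_+)$ (with multiplicity space $\Delta_{\rank(\lambda)}$, equal to $\Delta_+$ or $\Delta_-$ depending on parity) and once in $\bF_i(M_-)$ (with the opposite multiplicity space, by the ``opposite'' convention noted in the proposition).

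In either parity of $\rank(\lambda)$, the two contributions combine to $\Delta_+ \oplus \Delta_- = \Delta$, which yields exactly the asserted formula
\[
\bF_i = \bigoplus_{\substack{\lambda = \lambda^\dagger\\ i = (|\lambda| + \rank(\lambda))/2}} \bS_\lambda E \otimes \Delta \otimes A(-|\lambda|).
\]
The only point worth being careful about is that the direct sum of two minimal resolutions is itself minimal, which is clear because the differentials have degree zero entries in a graded maximal ideal, and this property is preserved under direct sum.

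There is really no obstacle here: the substantive content has already been carried out in the proof of the previous proposition, where Theorem~\ref{thm:geom-tech} was applied and Lemma~\ref{lemma:dnspincoh} was used to compute the cohomology of $\bS_\lambda \cR_\pm^\perp$ twisted by nothing (since $\cR_\pm^\perp = \cR_\pm$, so there is no extra twist by $\cL_\pm$ to worry about beyond what already appears in $M_\pm$ itself). The corollary is a one-line consequence.
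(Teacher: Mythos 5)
Your proof is correct and matches the paper's (implicit) reasoning: the paper offers no separate proof for the corollary, treating it as an immediate consequence of the preceding proposition by direct sum, which is exactly what you do. The bookkeeping checks out: for each self-conjugate $\lambda$ in homological degree $i = (|\lambda|+\rank(\lambda))/2$, the $M_+$ and $M_-$ resolutions contribute $\Delta_{\rank(\lambda)}$ and its opposite, which always combine to $\Delta_+ \oplus \Delta_- = \Delta$, and minimality is preserved under direct sum of graded minimal complexes.
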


\begin{remark} \label{rmk:inv-evenspin}
Taking the $\bS_\lambda(E)$-isotypic component of the minimal free resolution of $M_+$ with respect to $\GL(E)$, we get a complex $C(\lambda)_\bullet$ with 
\[
C(\lambda)_i = \bigoplus_{\substack{\mu = \mu^\dagger\\ i = (|\mu| + \rank(\mu))/2}} \bS_{\lambda/\mu}(V) \otimes \Delta_{\rank(\mu)}
\]
and an exact sequence $0 \to C(\lambda)_\bullet \to V_{\lambda + \delta} \to 0$. The Euler characteristic of this exact sequence gives an inversion formula for the matrix which encodes the decomposition of modules $\Delta_{\pm} \otimes \bS_\lambda V$ into irreducible representations of $\Spin(V)$. A similar formula can be obtained by using $M_-$ instead of $M_+$. These formulas already appear in \cite[11.11.VII]{littlewoodbook}.
\end{remark}

\begin{example} When $n=2$, the resolution is (we just write $\lambda$ in place of $\bS_\lambda E \otimes \Delta \otimes A(-|\lambda|)$):
\[
0 \to (2,2) \to (2,1) \to (1) \to \emptyset \to M \to 0. 
\]
When $n=3$, the resolution is 
\[
0 \to (3,3,3) \to (3,3,2) \to (3,2,1) \to (2,2) \oplus (3,1,1) \to (2,1) \to (1) \to \emptyset \to M \to 0. \qedhere
\]
\end{example}

\subsection{$\rB_n$ spinor modules.}

Now let $V$ be a $2n+1$ dimensional orthogonal space. Let $E$ be a vector space of dimension $n$. Let $\cR$ be the rank $n$ tautological subbundle on the orthogonal Grassmannian $\OGr(n,V)$. Then $\det \cR$ is the square of a line bundle; call its dual $\cL$. So we have $\cL^2 = \det \cR^*$. The {\bf Littlewood spinor module} $M$ is the pushforward of $\Sym(E \otimes \cR^*) \otimes \cL$. It carries an action of $\Spin(V)$ that does not descend to an action of $\SO(V)$. We have
\[
M = \rH^0(\OGr(n,V); \bigoplus_\lambda \bS_\lambda E \otimes \bS_\lambda \cR^* \otimes \cL) = \bigoplus_\lambda \bS_\lambda E \otimes V_{\lambda + \delta}
\]
where $\delta = (1^n)/2$. The first equality is \eqref{eqn:cauchy-filt}, and the second equality follows from \S\ref{sec:schurmodules} and an adaptation of the argument used for \eqref{eqn:typeC-sheaf-coh}.

Let $\Delta = V_\delta$ be the fundamental spin representation. 

\begin{lemma} \label{lem:Bnspin-coh}
Let $\lambda$ be a partition with at most $n$ parts. Then $\bS_\lambda \cR \otimes \cL$ has nonzero cohomology if and only if $\lambda \in Q_{1}$ (see \S\ref{sec:typeBn}). In this case, we have $\rH^{|\lambda|/2}(\OGr(n,V); \bS_\lambda \cR \otimes \cL) = \Delta$.
\end{lemma}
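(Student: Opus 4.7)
The plan is to apply Bott's theorem on $G/B$ for $\Spin(V)$, mimicking the proof of Lemma~\ref{lemma:dnspincoh}. Under the projection $\pi \colon G/B \to \OGr(n,V)$, the bundle $\bS_\lambda \cR \otimes \cL$ arises as $\pi_* \cL_\nu$, where $\nu = (-\lambda_n + \tfrac{1}{2}, \ldots, -\lambda_1 + \tfrac{1}{2})$ combines the highest Levi weight of $\bS_\lambda \cR$ with the weight $\delta$ of $\cL$. Since $\rR^{>0} \pi_* \cO = 0$, Leray gives $\rH^*(\OGr(n,V); \bS_\lambda \cR \otimes \cL) = \rH^*(G/B; \cL_\nu)$, and this is computed by Bott's algorithm applied to
\[
\nu + \rho = (n - \lambda_n,\ (n-1) - \lambda_{n-1},\ \ldots,\ 1 - \lambda_1),
\]
using $\rho = (n - \tfrac{1}{2}, n - \tfrac{3}{2}, \ldots, \tfrac{1}{2})$ for type $\rB_n$.

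Set $c_i := \lambda_i - i$; because $\lambda$ is a partition, $c_1 > c_2 > \cdots > c_n$. The type $\rB_n$ reflection hyperplanes are $x_i = 0$ and $x_i = \pm x_j$, so $\nu + \rho$ is regular iff no $c_i = 0$ and no $c_i + c_j = 0$ with $i \neq j$. Write $\lambda$ in Frobenius form $(a_1, \ldots, a_k \mid b_1, \ldots, b_k)$ with $k = \rank(\lambda)$. The first obstruction fails iff every $a_i \ge 1$. For the second, $c_i > 0$ iff $i \le k$, so problematic pairs must have $i \le k < j$; a short combinatorial unpacking shows this obstruction vanishes iff $\{a_i - 1 : i \le k\}$ coincides with $\{b_j : j \le k\}$. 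Since both are strictly decreasing sequences of length $k$ in $\bZ_{\ge 0}$, this equality is equivalent to $a_i = b_i + 1$ for all $i$, that is, $\lambda \in Q_{1}$.

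Now assume $\lambda \in Q_1$. The $|c_i|$ are distinct positive integers and satisfy
\[
\textstyle \sum_{i=1}^n |c_i| = \sum_{i \le k} a_i + \sum_{i > k}(i - \lambda_i) = \binom{n+1}{2},
\]
using $|\lambda| = k + \sum a_i + \sum b_i = 2\sum a_i$; hence $\{|c_i| : 1 \le i \le n\} = \{1, 2, \ldots, n\}$. It follows that the unique $W(\rB_n)$-dominant representative of $\nu + \rho$ is $(n, n-1, \ldots, 1)$, so $w \bullet \nu = \delta$ and the cohomology is $V_\delta = \Delta$. For the cohomological degree apply the length formula
\[
\ell(w) = \#\{i : \xi_i < 0\} + \#\{i<j : \xi_i < \xi_j\} + \#\{i<j : \xi_i + \xi_j < 0\}
\]
to $\xi = \nu + \rho$: the middle term vanishes because $\xi$ is strictly decreasing, the first equals $k$, and the third evaluates to $\binom{k}{2} + \sum_i a_i - \binom{k+1}{2}$ using $\{|c_p| : p > k\} = \{1, \ldots, n\} \setminus \{a_i\}$. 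Summing gives $\ell(w) = \sum a_i = |\lambda|/2$.

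The main obstacle is the Frobenius-level combinatorics of the middle paragraph: matching regularity of $\nu + \rho$ to the condition $a_i = b_i + 1$. This can be phrased conceptually via the Maya diagram $M = \{c_i + \tfrac{1}{2}\}_{i \ge 1} \subset \bZ + \tfrac{1}{2}$, in which regularity reads $M \cap (1 - M) = \emptyset$; the positive part of $M$ is $\{a_i + \tfrac{1}{2} : i \le k\}$, its complement on negative half-integers is $\{-b_j - \tfrac{1}{2} : j \le k\}$, and the reflection $m \mapsto 1 - m$ identifies these two sets precisely when $a_i = b_i + 1$ for every $i$, that is, when $\lambda \in Q_1$.
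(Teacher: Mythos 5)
Your proof takes a genuinely different route from the paper's. The paper does not carry out the Bott combinatorics at all: it observes that $\rho_{\rB_n} = \rho_{\rC_n} - \delta$, so the data $\nu + \rho$ one feeds into Bott's algorithm for $\bS_\lambda\cR \otimes \cL$ over $\OGr(n,V)$ is \emph{literally identical} to the data for $\bS_\lambda\cR$ over $\IGr(n,V')$ in the type $\rC_n$ Littlewood variety computation (same Weyl group, same sequence, same lengths), and the type $\rC_n$ answer is already known since there $\bF_\bullet$ is the Koszul complex on $\bigwedge^2 E$, whose terms realize exactly $Q_{-1}$. Your argument instead runs Bott's algorithm directly: recasting regularity of $\nu+\rho$ in the Frobenius/Maya language and then proving the length formula $\ell(w) = \sum a_i = |\lambda|/2$ by counting the three families of positive roots. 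Both are correct; the paper's is shorter but leans on the type $\rC$ Koszul resolution, while yours is self-contained and makes the combinatorial equivalence of $Q_1$ with regularity explicit. Two remarks. First, your claim that regularity reads $M \cap (1-M) = \emptyset$ for the full Maya diagram $M = \{c_i + \tfrac12\}_{i \ge 1}$ is only equivalent to the actual (finite) hyperplane check on $c_1, \dots, c_n$ under the auxiliary hypothesis $\lambda_1 \le n$ (or $\le n+1$): if $\lambda_1 > n+1$ then some $c_i > n$ creates a collision with a "virtual" $c_j$ for $j > n$ that is not seen by the $n$-variable check, so regularity can hold for $\lambda \notin Q_1$. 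The paper's statement of Lemma \ref{lem:Bnspin-coh} shares this omission (the analogous $\rD_n$ statement, Lemma \ref{lemma:dnspincoh}, does include $\lambda_1 \le n$), and the hypothesis is automatic in all applications since the lemma is invoked on $\lambda = \mu^\dagger$ with $\ell(\mu) \le \dim E = n$; it would be cleaner to state it. Second, in your length computation the "short combinatorial unpacking" is stated but not given; since the Maya reformulation later in the proposal is carrying the real weight there, it would be worth either spelling out that unpacking or promoting the Maya argument into the proof proper rather than leaving it as a remark.
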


\begin{proof}
Here we have $\rho = (2n-1, 2n-3, \dots, 3, 1)/2$. We need to check if the sequence $(-\lambda_n, \dots, -\lambda_1) + \rho + \delta = (-\lambda_n + n, -\lambda_{n-1} + n-1, \dots, -\lambda_1 + 1)$ lies on a type $\rB_n$ Weyl group hyperplane, i.e., if any entry is 0 or if two entries are equal up to a sign \cite[\S 12.1]{humphreys}.

Since the Weyl groups in type $\rB_n$ and $\rC_n$ are the same and $\rho = \rho' - \delta$ (where $\rho'$ is half of the sum of the positive roots in type $\rC_n$), we are in the exact same setup as calculating cohomology for the Littlewood variety of the symplectic group. So the combinatorics is the same, except for the shift by $\delta$ (which is why we get $\Delta$ instead of a trivial representation). To elaborate: if we were to calculate the minimal free resolution $\bF_\bullet$ from \S\ref{sec:typeCn} using the methods of \S\ref{sec:geomtech}, then we would take $\xi = E \otimes \cR$ over $\IGr(n,V)$. Then $\bigwedge^i \xi = \bigoplus_{|\lambda|=i} \bS_\lambda E \otimes \bS_{\lambda^\dagger} \cR$ \cite[Theorem 2.3.2(b)]{weyman}, and we know that the only $\bS_\lambda E$ that appear in $\bF_\bullet$ are $\lambda \in Q_{-1}$ (and hence the cohomology of $\bS_\mu \cR$ is nonzero exactly for $\mu \in Q_1$ by definition).
\end{proof}

\begin{lemma} \label{lem:spin-can}
The canonical bundle on $X$ is $\omega_X = (\det \cR)^n$.
\end{lemma}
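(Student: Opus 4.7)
The plan is to use adjunction relative to the embedding $X = \OGr(n,V) \hookrightarrow \Gr(n,V)$. Since $V$ has odd dimension $2n+1$ and $\OGr(n,V)$ is the zero locus of a regular section of $\Sym^2 \cR^*$ (a rank $\binom{n+1}{2}$ bundle), which realizes $X$ as a smooth local complete intersection of the expected codimension, the adjunction formula gives
\[
\omega_X = \bigl(\omega_{\Gr(n,V)} \otimes \det(\Sym^2 \cR^*)\bigr)\big|_X.
\]

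Next I would compute each factor. For the Grassmannian $\Gr(n,V) = \Gr(n,2n+1)$, the standard identification of the tangent bundle $\cR^* \otimes \cQ$, combined with $\det \cQ = \det \cR^*$ (since $\cR \oplus \cQ$ is trivial), yields
\[
\omega_{\Gr(n,V)} = (\det \cR)^{2n+1}.
\]
For the normal bundle factor, the standard identity $\det \Sym^2 W = (\det W)^{n+1}$ for an $n$-dimensional vector space $W$ (easily checked by summing weights) gives
\[
\det(\Sym^2 \cR^*) = (\det \cR^*)^{n+1} = (\det \cR)^{-(n+1)}.
\]
Combining these two computations,
\[
\omega_X = (\det \cR)^{2n+1} \otimes (\det \cR)^{-(n+1)} = (\det \cR)^{n},
\]
as claimed.

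There is no substantive obstacle; the only points requiring minor care are citing the regular-section description of $\OGr(n,V) \subset \Gr(n,V)$ (already invoked in \S\ref{sec:typeBn} via \cite[Proposition 4.3.8]{weyman}) and getting the signs right in the adjunction formula and in the identification of $\det \cQ$ with $\det \cR^*$ via the trivial filtration $0 \to \cR \to \cO_{\Gr}^{2n+1} \to \cQ \to 0$. Alternatively, one could argue by viewing $X = \Spin(V)/P$ as a homogeneous space, compute $\omega_X$ as the line bundle associated to the sum of the roots in the unipotent radical of $P$, and match the resulting weight with $n \cdot (\text{highest weight of } \det \cR^*)^{-1}$; I would keep the adjunction argument above as the primary one since it is shorter and uses only facts already present in the paper.
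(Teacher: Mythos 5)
Your proof is correct and matches the paper's: the paper also embeds $X$ into $\Gr(n,V)$ via the regular section of $\Sym^2\cR^*$ and derives $\omega_X$ from $\omega_{\Gr(n,V)}=(\det\cR)^{2n+1}$ and $\det\Sym^2\cR^*=(\det\cR)^{-(n+1)}$, only writing the adjunction step via the explicit Koszul resolution $0\to\bigwedge^{\binom{n+1}{2}}\Sym^2\cR\to\cdots\to\cO_Z\to\cO_X\to0$ together with a citation to Eisenbud's duality theorem for perfect modules. The two formulations are interchangeable, and your arithmetic is right.
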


\begin{proof}
We embed $X$ into $Z = \Gr(n,V)$. Let $\cR$ denote also the tautological subbundle on $Z$. By \cite[Proposition 4.3.8]{weyman}, we have a locally free resolution
\[
0 \to \bigwedge^{\binom{n+1}{2}}(\Sym^2 \cR) \to \cdots \to \bigwedge^2(\Sym^2 \cR) \to \Sym^2 \cR \to \cO_Z \to \cO_X \to 0.
\]
But $\bigwedge^{\binom{n+1}{2}}(\Sym^2 \cR) = (\det \cR)^{n+1}$ and $\omega_Z = (\det \cR)^{2n+1}$, so we conclude that $\omega_X$ is the restriction of $(\det \cR)^n$ \cite[Theorem 21.15]{eisenbud}.
\end{proof}

Using the notation from \S\ref{sec:geomtech}, the bundle $\xi = E \otimes \cR^\perp$ (orthogonal complement with respect to the quadratic form on $V \times X$) is an extension
\[
0 \to E \otimes \cR \to \xi \to E \to 0.
\]
Let $\xi' = E \otimes (\cR \oplus \cO)$ be its semisimplification. Using \S\ref{sec:geomtech}, we can construct a (not necessarily minimal) free resolution $\tilde{\bF}_\bullet$ for $M$ with terms
\[
\tilde{\bF}_i = \bigoplus_j \rH^j(\bigwedge^{i+j} \xi' \otimes \cL) \otimes A(-i-j) = \bigoplus_{k \ge 0} \bigwedge^k E \otimes (\bigoplus_{\lambda \in Q_{-1}(2i-2k)} \bS_\lambda E \otimes \Delta) \otimes A(-2i)
\]
(for the second equality, we use \cite[Theorem 2.3.2]{weyman} to get 
\[
\bigwedge^d \xi' = \bigoplus_{d \ge k \ge 0} \bigwedge^k E \otimes (\bigoplus_{|\lambda|=d-k} \bS_\lambda E \otimes \bS_{\lambda^\dagger} \cR)
\]
and Lemma~\ref{lem:Bnspin-coh} to calculate the cohomology of $\bS_{\lambda^\dagger} \cR$).
So a (not necessarily minimal) presentation of $M$ is given by
\begin{align} \label{eqn:nonmin-pres}
\begin{array}{c} E \otimes \Delta \otimes A(-1) \\ \bigwedge^2 E \otimes \Delta \otimes A(-2) \end{array} \to \Delta \otimes A \to M \to 0.
\end{align}

\begin{proposition}
The minimal free resolution $\bF_\bullet$ of $M$ is given by
\[
\bF_i = \bigoplus_{\substack{\lambda = \lambda^\dagger \\ i = (|\lambda| + \rank(\lambda))/2}} \bS_\lambda E \otimes \Delta.
\]
\end{proposition}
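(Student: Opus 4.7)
The plan is to apply Theorem~\ref{thm:geom-tech} directly with $\xi = E \otimes \cR^\perp$ (rather than with its semisimplification $\xi'$, which produced the non-minimal $\tilde{\bF}_\bullet$), and to reduce the resulting cohomology computation to the type $\rD_{n+1}$ case already treated in Lemma~\ref{lemma:dnspincoh}. The reduction uses the classical isomorphism
\[
\varphi\colon \OGr(n, V) \xrightarrow{\;\sim\;} \OGr^+(n+1, \tilde V), \qquad W \longmapsto W \oplus \langle \bar u + v_0 \rangle,
\]
where $\tilde V = V \oplus \langle v_0 \rangle$ extends $V$ by a suitable anisotropic $v_0$ and $\bar u \in W^\perp \subset V$ lifts a generator of $W^\perp/W$, normalized so that $\bar u + v_0$ is isotropic in $\tilde V$. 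The projection $\tilde V \to V$ along $\langle v_0 \rangle$ restricts to an isomorphism of vector bundles $\tilde{\cR}_+ \xrightarrow{\;\sim\;} \cR^\perp$ (since $\tilde W$ is isotropic in $\tilde V$ and meets $\langle v_0 \rangle$ trivially), and from $\det \tilde\cR_+ \cong \det \cR^\perp \cong \det \cR$ the ample line bundles $\tilde\cL_+$ and $\cL$ also correspond under $\varphi^*$.

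With this identification, Cauchy's formula gives $\bigwedge^d(E \otimes \cR^\perp) = \bigoplus_{|\lambda|=d} \bS_{\lambda^\dagger} E \otimes \bS_\lambda \cR^\perp$ and
\[
\rH^j(\OGr(n, V);\, \bS_\lambda \cR^\perp \otimes \cL) \cong \rH^j(\OGr^+(n+1, \tilde V);\, \bS_\lambda \tilde\cR_+ \otimes \tilde\cL_+).
\]
By Lemma~\ref{lemma:dnspincoh} (applied in dimension $n+1$), this vanishes unless $\lambda = \lambda^\dagger$, in which case it equals $\tilde\Delta_{\rank(\lambda)}$ concentrated in cohomological degree $(|\lambda|-\rank(\lambda))/2$. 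Since the half-spin representations $\tilde\Delta_\pm$ of $\Spin(\tilde V)$ both restrict to $\Delta$ on $\Spin(V)$, and since $\bS_{\lambda^\dagger} E = \bS_\lambda E$ for self-conjugate $\lambda$, substituting into the formula of Theorem~\ref{thm:geom-tech} with $i = (|\lambda|+\rank(\lambda))/2$ yields
\[
\bF_i = \bigoplus_{\substack{\lambda = \lambda^\dagger \\ i = (|\lambda|+\rank(\lambda))/2}} \bS_\lambda E \otimes \Delta \otimes A(-|\lambda|).
\]
Minimality follows from $\GL(E) \times \Spin(V)$-equivariance together with Schur's lemma: any degree-$0$ entry of the differential $\bF_i \to \bF_{i-1}$ between summands indexed by self-conjugate $\lambda$ and $\mu$ would force $\lambda = \mu$, making the constraint $i = (|\lambda|+\rank(\lambda))/2 = i-1$ absurd.

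The main obstacle is verifying the bundle identification $\tilde\cR_+ \cong \cR^\perp$, and in particular matching the non-split extension classes of $0 \to \cR \to \cR^\perp \to \cO \to 0$ and $0 \to \cR \to \tilde\cR_+ \to \cO \to 0$ under $\varphi$; this is classical but requires careful bookkeeping of quadratic forms and the choice of isotropic lifts. Alternatively, one may bypass the identification altogether by computing $\rH^*(\OGr(n, V); \bS_\lambda \cR^\perp \otimes \cL)$ directly, using the filtration $\cR \subset \cR^\perp$ together with Bott's algorithm on the full flag variety of $\Spin(V)$; the combinatorial analysis closely parallels that of Lemma~\ref{lemma:dnspincoh}, differing only by the $\rho$-shift appropriate to type $\rB_n$.
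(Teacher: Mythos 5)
Your proof is correct, but it takes a genuinely different route from the paper's. You identify $\OGr(n,V) \cong \OGr^+(n+1,\tilde V)$ via the $\rB_n/\rD_{n+1}$ folding so that $\cR^\perp$ becomes the tautological bundle $\tilde\cR_+$ (and $\cL$ becomes $\tilde\cL_+$: since $\Pic$ is $\bZ$, $\cR^\perp/\cR$ is trivial so $\det\cR^\perp=\det\cR$, and the square root of $\det\cR^*$ is unique), then read off the cohomology from Lemma~\ref{lemma:dnspincoh} applied in rank $n+1$, restrict $\tilde\Delta_\pm$ to $\Delta$, and feed the answer straight into Theorem~\ref{thm:geom-tech}. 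The paper instead goes \emph{down} rather than \emph{up}: it first proves $M$ is Cohen--Macaulay (requiring the canonical-bundle computation of Lemma~\ref{lem:spin-can}), then restricts to a generic $2n$-dimensional nondegenerate subspace $W\subset V$, invokes generic perfection to preserve the resolution, and identifies the restricted module with the $\rD_n$ Littlewood spinor module for $B=\Sym(E\otimes W)$ so that Corollary~\ref{cor:dnspinres} applies. Your route is more direct and in effect proves the Corollary following the Proposition (the cohomology of $\bS_\lambda\cR^\perp\otimes\cL$) first, whereas the paper deduces that Corollary afterwards by comparing multiplicity spaces; your route also yields Cohen--Macaulayness for free from Auslander--Buchsbaum, since the top nonzero $\bF_i$ occurs at $i=\binom{n+1}{2}=\codim\supp M$. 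The cost of your route is the verification of the bundle isomorphism $\varphi^*\tilde\cR_+\cong\cR^\perp$, which you rightly flag; the projection-along-$\langle v_0\rangle$ argument you sketch does settle it cleanly and carries the subbundle $\cR$ along, so no separate extension-class bookkeeping is actually needed. One caution on your ``alternative'' bypass: computing $\rH^*(\OGr(n,V);\bS_\lambda\cR^\perp\otimes\cL)$ by filtering $\cR\subset\cR^\perp$ and running Bott on the flag variety is \emph{not} merely a $\rho$-shift away from the $\rD$ computation, because $\cR^\perp$ is a non-split extension of $\cO$ by $\cR$; one would have to control the resulting spectral sequence, and this is essentially the same difficulty that forces the paper to work with the semisimplification $\xi'$ and a non-minimal complex $\tilde\bF_\bullet$. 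So the folding identification is doing real work and should not be regarded as optional.
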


\begin{proof}
First we claim that $M$ is a Cohen--Macaulay module. The higher direct images of $\Sym(E \otimes \cR^*) \otimes \cL$ vanish, so by Theorem~\ref{thm:CM-geom}, we need to show that the higher direct images of $\omega_X \otimes \det \xi^* \otimes \cL^* \otimes \Sym(E \otimes \cR^*)$ also vanish. We calculate that $\det \xi^* = (\det \cR^*)^n$, and we have $\omega_X = (\det \cR)^n$ by Lemma~\ref{lem:spin-can}. So we are reduced to showing that $\bS_\lambda \cR^* \otimes \cL^*$ never has higher cohomology. If $\lambda_n > 0$, then writing $\lambda = \mu + (1^n)$, we have $\bS_\lambda \cR^* \otimes \cL^* = \bS_\mu \cR^* \otimes \cL$, which we have already seen has no higher cohomology. Otherwise, we have $\lambda_n = 0$ and the relevant sequence is $(\lambda_1, \dots, \lambda_{n-1}, 0) - \delta$. But this lies on a reflection hyperplane, namely for the simple reflection $s_n$. So the bundle has no cohomology in this situation. This proves that $M$ is Cohen--Macaulay.

Let $W \subset V$ be a $2n$-dimensional subspace on which the orthogonal form has full rank. Write $A = \Sym(E \otimes V)$ and $B = \Sym(E \otimes W)$. The Littlewood variety has codimension $\binom{n+1}{2}$ in $\hom(E,V)$ and its intersection with $\hom(E,W)$ still has codimension $\binom{n+1}{2}$. Hence by generic perfection \cite[Theorem 1.2.14]{weyman}, $\tilde{\bF}_\bullet \otimes_A B$ is a free resolution of $M' = M \otimes_A B$. Finally, we use that the restriction of $\Delta$ to $\Spin(W)$ is $\Delta_+ \oplus \Delta_-$, so we continue to use $\Delta$ as notation after restricting. So by \eqref{eqn:nonmin-pres}, $M'$ has a presentation
\[
\begin{array}{c} E \otimes \Delta \otimes B(-1) \\ \bigwedge^2 E \otimes \Delta \otimes B(-2) \end{array} \to \Delta \otimes B \to M' \to 0.
\]
A calculation with weights (using, for example, \cite[\S 20.1]{fultonharris}) shows that $\Delta_{\pm}$ appears in $V \otimes \Delta_{\mp}$ with multiplicity $1$. So the map $E \otimes \Delta \otimes B(-1) \to \Delta \otimes B$ is essentially unique up to a choice of scalars, and from Corollary~\ref{cor:dnspinres}, it does not have any linear syzygies. Hence the quadratic relations are redundant (since the linear syzygies existing in $\tilde{\bF}_2$ cancel with these quadratic relations when we minimize the complex) and we conclude that $M'$ is the Littlewood spinor module for $B$. In particular, the Betti tables of $M$ and the Littlewood spinor module for $B$ are the same, which gives the conclusion.
\end{proof}

\begin{remark} \label{rmk:inv-oddspin}
Taking isotypic components of the minimal free resolution of $M$ with respect to $\GL(E)$, we get complexes $C(\lambda)_\bullet$ with 
\[
C(\lambda)_i = \bigoplus_{\substack{\mu = \mu^\dagger\\ i = (|\mu| + \rank(\mu))/2}} \bS_{\lambda/\mu}(V) \otimes \Delta
\]
and an exact sequence $0 \to C(\lambda)_\bullet \to V_{\lambda + \delta} \to 0$. The Euler characteristic of this exact sequence gives an inversion formula for the matrix which encodes the decomposition of modules $\Delta \otimes \bS_\lambda V$ into irreducible representations of $\Spin(V)$. These formulas already appear in \cite[11.11.VII]{littlewoodbook}.
\end{remark}

\begin{corollary}
Let $\lambda$ be a partition with $\ell(\lambda) \le n+1$ and $\lambda_1 \le n$. Then $\bS_\lambda \cR^\perp \otimes \cL$ has nonzero cohomology if and only if $\lambda = \lambda^\dagger$. In this case, we have 
\begin{align*}
\rH^i(X; \bS_\lambda \cR^\perp \otimes \cL) = \begin{cases} \Delta & \text{if } i=\frac{|\lambda|-\rank(\lambda)}{2}\\
0 & \text{else} \end{cases}.
\end{align*}
\end{corollary}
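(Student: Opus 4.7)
The strategy is to identify $X = \OGr(n,V)$ with a component of the top orthogonal Grassmannian for a larger even-dimensional orthogonal space and then invoke Lemma~\ref{lemma:dnspincoh}. Extend $V$ to $V' = V \perp \langle e\rangle$ with $q(e)=1$: this is a $(2n+2)$-dimensional orthogonal space of maximal Witt index $n+1$, and $\Spin(V)$ embeds in $\Spin(V')$ as the stabilizer of $e$. The classical ``folding'' construction sends a maximal isotropic $W \subset V$ to $\tilde W = W + \langle u + ce\rangle \subset V'$, where $u$ represents the nondegenerate line $(W^\perp \cap V)/W$ and $c$ is a fixed square root of $-q(u)$. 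As $u$ and $c$ vary with $W$, this gives a $\Spin(V)$-equivariant isomorphism $f \colon X \xrightarrow{\sim} \OGr^+(n+1,V')$ (the other choice of sign for $c$ produces the other component).

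Next I would compare the bundles under $f$. Because $V$ contains no isotropic $(n+1)$-subspace, the constant $c$ is always nonzero, so the projection $V'\to V$ with kernel $\langle e\rangle$ restricts to an injection on $\tilde W$ with image $W^\perp \cap V$. Globally this gives $f^*\cR_+ \cong \cR^\perp$. The exact sequence $0\to \cR \to \cR^\perp \to \cO \to 0$ implies $\det \cR^\perp \cong \det \cR$, hence $f^*\det\cR_+^* \cong \det \cR^*$; since $\cL_+$ and $\cL$ are the ample generators of the respective Picard groups and satisfy $\cL_+^2 = \det \cR_+^*$ and $\cL^2 = \det \cR^*$, we get $f^*\cL_+ \cong \cL$. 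Therefore $\bS_\lambda \cR^\perp \otimes \cL \cong f^*(\bS_\lambda \cR_+ \otimes \cL_+)$ as $\Spin(V)$-equivariant bundles.

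Now I would apply Lemma~\ref{lemma:dnspincoh} to $\OGr^+(n+1,V')$ (with the lemma's ``$n$'' replaced by $n+1$; the corollary's hypothesis $\lambda_1 \le n$ is stronger than the lemma's $\lambda_1 \le n+1$). The lemma yields that the cohomology of $\bS_\lambda \cR_+ \otimes \cL_+$ is nonzero iff $\lambda = \lambda^\dagger$, concentrated in degree $(|\lambda|-\rank(\lambda))/2$, and equal to $\Delta_+^{V'}$ or $\Delta_-^{V'}$ according to the parity of $\rank(\lambda)$. Transporting back via $f$ and restricting from $\Spin(V')$ to $\Spin(V)$ gives the corollary once we observe that both half-spinor modules $\Delta_\pm^{V'}$ of $\Spin(V')$ restrict to the unique spin representation $\Delta$ of $\Spin(V)$, a standard branching fact.

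The main delicate point is the construction of the isomorphism $f$ and the identification $f^*\cR_+ \cong \cR^\perp$; once these are in hand the result reduces to the $\rD_{n+1}$ statement already proved. An alternative approach, avoiding the auxiliary space $V'$, would be to re-run the Bott-algorithm argument of Lemma~\ref{lem:Bnspin-coh} after filtering $\bS_\lambda \cR^\perp$ by the subquotients $\bS_\mu \cR$ for $\mu\subseteq\lambda$ with $\lambda/\mu$ a horizontal strip; but this produces a spectral sequence whose $E_1$-page typically contains many copies of $\Delta$ that must be shown to cancel via the differentials, making the embedding into $\rD_{n+1}$ the cleaner route.
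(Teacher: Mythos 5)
Your proposal is correct and takes a genuinely different route from the paper. The paper proves this corollary by a bootstrap: the preceding proposition already gives the minimal free resolution $\bF_\bullet$ of $M$ (via generic perfection and reduction to the $\rD_n$ computation), and Theorem~\ref{thm:geom-tech} expresses $\bF_i$ as $\bigoplus_{j} \rH^j(X; \bigwedge^{i+j}(E\otimes\cR^\perp)\otimes\cL)\otimes A(-i-j) = \bigoplus_{\lambda} \bS_{\lambda^\dagger}E\otimes \rH^{|\lambda|-i}(X;\bS_\lambda\cR^\perp\otimes\cL)\otimes A(-|\lambda|)$; matching $\GL(E)$-isotypic multiplicity spaces between the two descriptions of $\bF_i$ immediately yields the cohomology of $\bS_\lambda\cR^\perp\otimes\cL$. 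Your argument instead proves the cohomology statement directly by exhibiting the classical $\Spin(V)$-equivariant isomorphism $\OGr(n,V)\cong\OGr^+(n+1,V')$ for $V'=V\perp\langle e\rangle$, showing $f^*\cR_+\cong\cR^\perp$ and $f^*\cL_+\cong\cL$, and pulling back Lemma~\ref{lemma:dnspincoh} (applied with $n+1$ in place of $n$), together with the branching $\Delta_\pm^{\rD_{n+1}}\big|_{\rB_n}\cong\Delta$. Both are valid. Your version is more self-contained and does not rely on the proposition or the module $M$ at all; in fact, combined with Theorem~\ref{thm:geom-tech} and the Cauchy formula, it would give an alternative, arguably cleaner, proof of the proposition itself, reversing the paper's logical order. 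The paper's approach has the virtue of avoiding the construction of $f$ and getting the corollary for free from work already done.

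Two small points worth tightening. First, the identification $\cR^\perp/\cR\cong\cO$ and the global choice of the scalar $c$ with $c^2=-q(u)$ need a word: $\cR^\perp/\cR$ is a $2$-torsion line bundle on a space with $\Pic\cong\bZ$, hence trivial, and after rescaling a trivializing section $u$ one may take $q(u)$ to be a nonzero constant; the square root $c$ then exists after base change to $\overline K$, which is harmless since the cohomology groups and their $\Spin(V)$-equivariant structure descend. Second, one should note explicitly that $f^*\cL_+$ and $\cL$ agree not just as abstract line bundles squaring to $\det\cR^*$ but as $\Spin(V)$-equivariant line bundles; this is automatic because $\Spin(V)$ has no nontrivial characters, so the equivariant structure on the ample generator is unique.
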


\begin{proof}
From \S\ref{sec:geomtech}, if $\bF_\bullet$ is the minimal free resolution of $M$, then we have
\[
\bF_i = \bigoplus_{j \ge 0} \rH^j(X; \bigwedge^{i+j}(E \otimes \cR^\perp) \otimes \cL) \otimes A(-i-j),
\]
and $\bigwedge^{i+j}(E \otimes \cR^\perp) = \bigoplus_{|\lambda|=i+j} \bS_{\lambda^\dagger} E \otimes \bS_\lambda \cR^\perp$ \cite[Theorem 2.3.2]{weyman}, but we have calculated $\bF_i$ in another way. Then the result follows by comparing the multiplicity spaces of the $\GL(E)$ action.
\end{proof}

\begin{remark}
A similar technique can be used to calculate the cohomology of $\bS_\lambda \cR^\perp$.
\end{remark}


\section{Type ${\rm G}_2$.}

\subsection{Homogeneous spaces.}

We start with the description of the homogeneous spaces for the group of type ${\rm G}_2$. We work over a field $K$ of arbitrary characteristic. We follow \cite[\S 6]{anderson} for definitions and basic properties. Let $V$ be a 7-dimensional vector space and let $x_1, \dots, x_7$ be a basis for $V^*$. Define $\gamma_0 \in \bigwedge^3 V^*$ and $q_0 \in \Sym^2(V^*)$ by
\begin{align*}
  \gamma_0 &= x_1\wedge x_4\wedge x_7 + x_2\wedge x_4\wedge x_6 +
  x_3\wedge x_4\wedge x_5 - x_2\wedge x_3\wedge x_7 - x_1\wedge
  x_5\wedge x_6\\
  q_0 &= x_1x_7 + x_2x_6 + x_3x_5 + x_4^2.
\end{align*}
Given another pair $(\gamma, q) \in \bigwedge^3 V^* \oplus
\Sym^2(V^*)$, we say that $(\gamma, q)$ is a compatible nondegenerate
pair if it is equivalent to $(\gamma_0, q_0)$ after some change of
basis \cite[Definition 6.1.2]{anderson}. We define a bilinear
orthogonal from $\beta$ via 
\[
\beta(x,y) = q(x+y) - q(x) - q(y).
\]

Let $G$ be the automorphism group of a compatible nondegenerate pair $(\gamma, q)$. Then $G$ is a simple algebraic group of type ${\rm G}_2$ \cite[Proposition 6.1.5]{anderson}. We label the Dynkin diagram of $G$ using Bourbaki notation
\[
1 \Lleftarrow 2.
\]

The simple roots of $G$ are $\alpha_1$ and $\alpha_2$, and we set the
fundamental weights $\omega_1 = 2\alpha_1 + \alpha_2$, $\omega_2 =
3\alpha_1 + 2\alpha_2$. In particular, $V$ has highest weight
$\omega_1$, and the adjoint representation has highest weight
$\omega_2$. Let $V_\lambda$ be the Schur module with highest weight
$\lambda_1\omega_1 + \lambda_2\omega_2$. Note that $V_{(\lambda_1 -
  \lambda_2, \lambda_2)}$ is a quotient of $\bS_\lambda V$ with
multiplicity 1, and is obtained via an analogue of Weyl's
construction, see \cite{huang}. We set 
\[
\bS_{[\lambda]} V = V_{(\lambda_1 - \lambda_2, \lambda_2)}.
\]

A 2-dimensional subspace $W \subset V$ is {\bf isotropic} if
$\gamma(w,w',v) = 0$ for all $w,w' \in W$ and $v \in V$. A
1-dimensional subspace is {\bf isotropic} if it belongs to some
2-dimensional isotropic subspace. Then $G$ acts transitively on the 1-
and 2-dimensional isotropic subspaces of $V$, so we get two
Grassmannians $\Gr_\gamma(1,V)$ and $\Gr_\gamma(2,V)$, and a flag
variety $\Fl_\gamma(V) = \Fl_\gamma(1,2;V)$, which have dimensions 5,
5, and 6, respectively.

Then we get tautological subbundles $\cR_1 \subset \cR_2 \subset V$
over $\Fl_\gamma(V)$. Given an isotropic vector $u \in V$, the
subspace $V_u = \{ v \mid \langle u,v \rangle \in \Gr_\gamma(2,V) \}$
is 3-dimensional and $q$-isotropic \cite[Lemma 6.1.8]{anderson}. Hence
we get a subbundle $\cR_3 \subset V$ containing $\cR_2$. Then one
defines $\cR_{7-i}$ to be the $\beta$-orthogonal complement of $\cR_i$
for $i=1,2,3$. In this way, we get a complete flag
\[
\cR_1 \subset \cR_2 \subset \cR_3 \subset \cR_4 \subset \cR_5 \subset
\cR_6 \subset \cR_7 = V
\]
of vector bundles over $\Fl_\gamma(V)$. We let $\cL(a,b)$ be the line bundle whose sections are $V_{(a,b)}^* \cong V_{(a,b)}$ when $a,b \ge 0$. In particular, the higher cohomology of $\cL(a,b)$ vanishes by Kempf vanishing (see \S\ref{sec:schurmodules}).

\begin{proposition} \label{prop:G2linebundles}
With the identifications as above we have
\begin{align*}
  \cR_1 &\cong \cL(-1,0), \quad &\cR_2/\cR_1 &\cong \cL(1,-1),
  &\cR_3/\cR_2 &\cong \cL(-2,1), \quad &\cR_4/\cR_3 &\cong \cL(0,0),\\
  \cR_5/\cR_4 &\cong \cL(2,-1), \quad &\cR_6/\cR_5 &\cong \cL(-1,1),
  &\cR_7/\cR_6 &\cong \cL(1,0).
\end{align*}
\end{proposition}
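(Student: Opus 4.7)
The plan is to identify $\Fl_\gamma(V)$ with the full flag variety $G/B$, so that each tautological bundle $\cR_i$ becomes the homogeneous vector bundle associated to a $B$-subrepresentation of $V$ at the basepoint $eB$. The one-dimensional quotient $\cR_i/\cR_{i-1}\vert_{eB}$ is then a character of $B$, and the convention recalled in \S\ref{sec:schurmodules} translates this character into a line bundle $\cL(a,b)$.

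First, $G$ acts transitively on the complete isotropic flags $W_1 \subset W_2 \subset V$ (this is built into the homogeneous-space description of \cite[\S 6]{anderson}), and the stabilizer of a chosen standard such flag is a Borel subgroup $B$; this identifies $\Fl_\gamma(V)$ with $G/B$. Since the remaining subbundles $\cR_3 = V_u$ and $\cR_{7-i} = \cR_i^\perp$ (for $i=1,2,3$) are defined $G$-equivariantly, the whole flag $\cR_1 \subset \cR_2 \subset \cdots \subset \cR_6$ restricts at $eB$ to a $B$-stable complete flag in $V$.

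Second, I pin down the weight of each step by exploiting the fact that $V = V_{\omega_1}$ has one-dimensional weight spaces whose weights are totally ordered under dominance. The seven weights of $V$ are
\[
\omega_1,\ -\omega_1 + \omega_2,\ 2\omega_1 - \omega_2,\ 0,\ -2\omega_1 + \omega_2,\ \omega_1 - \omega_2,\ -\omega_1,
\]
with consecutive differences $\alpha_1, \alpha_2, \alpha_1, \alpha_1, \alpha_2, \alpha_1$ (all positive roots). This linear ordering forces the $B$-stable complete flag in $V$ to be unique, namely the filtration by the top $i$ weight vectors. Therefore $\cR_i/\cR_{i-1}\vert_{eB}$ carries the $i$-th weight in the list above. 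Under the sign convention of \S\ref{sec:schurmodules} — for $\mu$ dominant, $\cL_\mu$ is globally generated and its fiber at $eB$ has $T$-weight $-\mu$ — a $B$-line with fiber-weight $a\omega_1 + b\omega_2$ is $\cL(-a,-b)$. Applying this to the seven weights above yields the proposition.

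As a consistency check, the identifications $\cR_{7-i} = \cR_i^\perp$ give perfect pairings $\cR_{7-i}/\cR_{7-i-1} \cong (\cR_{i+1}/\cR_i)^*$, so only the three bottom quotients $\cR_1$, $\cR_2/\cR_1$, $\cR_3/\cR_2$ are genuinely independent; the other four are forced by duality, and this visibly matches the claim (in particular, $\cR_4/\cR_3$ is self-dual, hence trivial). The main point requiring care is bookkeeping of two signs — the dominance ordering of the weights of $V_{\omega_1}$, and the sign relating the fiber-weight at $eB$ to the label $(a,b)$ of $\cL(a,b)$. No serious calculation is required beyond listing those seven weights.
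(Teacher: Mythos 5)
Your proof is correct but follows a genuinely different route from the paper's. The paper identifies $\cR_1$ and $\det\cR_2$ via the projective embeddings $\Gr_\gamma(1,V)\hookrightarrow\bP(V_{\omega_1})$ and $\Gr_\gamma(2,V)\hookrightarrow\bP(V_{\omega_2})$, then invokes Anderson's Lemma~6.2.3 to get $\cR_3/\cR_2 \cong \cR_1\otimes(\cR_2/\cR_1)^{-1}$, and finally uses the duality $\cR_{7-i}=\cR_i^\perp$ for the remaining quotients. You bypass the cited lemma entirely by reading off the $T$-weight of each $\cR_i/\cR_{i-1}$ at the basepoint $eB$ and then applying the sign convention; this is more self-contained and treats all seven lines uniformly, whereas the paper's route requires less knowledge of the weight structure of $V$ but leans on the external reference. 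One point worth making explicit in your argument: the linear ordering of the weights of $V_{\omega_1}$ does not by itself force the $B$-stable complete flag to be unique (a reducible $T$-module could have linearly ordered weights yet admit several $B$-stable flags). What makes your argument work is that $V$ is irreducible with multiplicity-one weight spaces and consecutive weights differing by simple roots, so the standard $\fsl_2$ weight-string argument shows each simple raising operator acts nonzero along the chain, and this is what pins down the flag as the top-down weight filtration. Your duality consistency check at the end usefully mirrors the paper's own appeal to $\beta$-duality and guards against sign errors.
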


\begin{proof} 
The first isomorphism comes from the fact that the space of $\gamma$-isotropic lines embeds in $\bP(V_{\omega_1})$, and the second from the fact that the space of $\gamma$-isotropic planes embeds in $\bP(V_{\omega_2})$ via the line bundle $(\bigwedge^2 \cR_2)^{-1}$. The third comes from the fact that $\cR_2 / \cR_1 \otimes \cR_3 / \cR_2 \cong \cR_1$ \cite[Lemma 6.2.3]{anderson}. For the rest, we use the duality induced by $\beta$.
\end{proof}

Using this, we can calculate cohomology of Schur functors $\bS_\lambda (\cR_r^*)$ on $\Gr_\gamma(r,V)$. The explicit descriptions of the spaces $\Gr_\gamma(i,V)$ gives that the projection $\pi_1 \colon \Fl_\gamma(V) \to \Gr_\gamma(1,V)$ is the projective bundle $\bP(\cR_3 / \cR_1)$, and the projection $\pi_2 \colon \Fl_\gamma(V) \to \Gr_\gamma(2,V)$ is the projective bundle $\bP(\cR_2)$.

\begin{proposition} \label{prop:G2cohomology}
For $\lambda_1 \ge \lambda_2 \ge 0$, we have 
\begin{align*}
\rR^i {\pi_2}_* \cL(\lambda_1 - \lambda_2, \lambda_2) &= \begin{cases} \bS_\lambda(\cR_2^*) & i=0 \\
0 & i>0 \end{cases}.
\end{align*} 
In particular, the cohomology of $\cL(\lambda_1 - \lambda_2, \lambda_2)$ and
$\bS_\lambda(\cR_2^*)$ agree. So we get 
\[
\rH^i(\Gr_\gamma(2,V); \bS_\lambda(\cR_2^*)) = \begin{cases} \bS_{[\lambda]} V & i=0 \\ 0 & i>0 \end{cases}.
\]
\end{proposition}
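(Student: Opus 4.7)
The plan is to exploit the projective-bundle structure $\pi_2 = \bP(\cR_2) \colon \Fl_\gamma(V) \to \Gr_\gamma(2,V)$ (stated just before the proposition) together with Proposition~\ref{prop:G2linebundles} to compute the direct images fiberwise, and then to combine this with Kempf vanishing on $\Fl_\gamma(V)$.

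First I would identify the two ingredients needed for the projection formula. Since $\cR_1 \subset \pi_2^* \cR_2$ is the tautological line sub-bundle of $\bP(\cR_2)$, it is the relative $\cO(-1)$ of this projective bundle, so $\cL(1,0) \cong \cR_1^{-1}$ is the relative $\cO(1)$. Next, using Proposition~\ref{prop:G2linebundles},
\[
\bigwedge^{\!2}\cR_2 \;=\; \cR_1 \otimes \cR_2/\cR_1 \;\cong\; \cL(-1,0)\otimes \cL(1,-1) \;=\; \cL(0,-1),
\]
and since $\bigwedge^2 \cR_2$ is pulled back from $\Gr_\gamma(2,V)$, we get $\cL(0,1) \cong \pi_2^*(\det \cR_2^*)$. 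Consequently
\[
\cL(\lambda_1-\lambda_2,\lambda_2) \;\cong\; \cO_{\bP(\cR_2)}(\lambda_1-\lambda_2) \otimes \pi_2^*(\det \cR_2^*)^{\lambda_2}.
\]

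Now I would apply the standard computation for a $\bP^1$-bundle together with the projection formula. Since $\lambda_1-\lambda_2 \ge 0$, on each fiber $\bP^1$ the restriction is $\cO(\lambda_1-\lambda_2)$, which has no higher cohomology, so $\rR^i {\pi_2}_* \cL(\lambda_1-\lambda_2,\lambda_2) = 0$ for $i > 0$, and
\[
{\pi_2}_* \cL(\lambda_1-\lambda_2,\lambda_2) \;\cong\; \Sym^{\lambda_1-\lambda_2}(\cR_2^*) \otimes (\det \cR_2^*)^{\lambda_2} \;\cong\; \bS_\lambda(\cR_2^*),
\]
where the last identification is the standard formula $\bS_{(a,b)}E = \Sym^{a-b}(E)\otimes (\det E)^b$ for a rank $2$ bundle $E$. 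This establishes the first statement.

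For the cohomology statement, I would use the Leray spectral sequence for $\pi_2$: because the higher direct images vanish, the cohomology of $\bS_\lambda(\cR_2^*)$ on $\Gr_\gamma(2,V)$ coincides with the cohomology of $\cL(\lambda_1-\lambda_2,\lambda_2)$ on $\Fl_\gamma(V)$. Kempf vanishing (invoked in the paragraph preceding Proposition~\ref{prop:G2linebundles}) then kills $\rH^{>0}$ and identifies $\rH^0$ with $V_{(\lambda_1-\lambda_2,\lambda_2)} = \bS_{[\lambda]}V$. The only potential obstacle is bookkeeping of signs and conventions for $\bP(\cR_2)$ versus $\bP(\cR_2^*)$ and for how $\cL(a,b)$ is built multiplicatively from $\cL(1,0)$ and $\cL(0,1)$; Proposition~\ref{prop:G2linebundles} nails these down so that the calculation above is unambiguous.
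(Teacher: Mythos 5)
Your proof is correct and takes essentially the same approach as the paper: identify $\cL(1,0)$ with the relative $\cO(1)$ of $\bP(\cR_2)$ and $\cL(0,1)$ with $\pi_2^*(\det\cR_2^*)$ via Proposition~\ref{prop:G2linebundles}, push forward using the $\bP^1$-bundle formula and the projection formula, and conclude with the Leray spectral sequence and Kempf vanishing. The only cosmetic difference is that you derive $\det\cR_2^* \cong \cL(0,1)$ by multiplying the quotient line bundles, whereas the paper reads it off directly from the embedding of $\Gr_\gamma(2,V)$ into $\bP(V_{\omega_2})$.
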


\begin{proof}
Let $\tilde{\cR}_1$ and $\tilde{\cR}_2$ denote the tautological bundles on $\Fl_\gamma(V)$ to distinguish from the tautological bundle $\cR_2$ on $\Gr_\gamma(2,V)$. From Proposition~\ref{prop:G2linebundles}, we have $\tilde{\cR}_1^* = \cL(1,0)$ and $\det(\tilde{\cR}_2^*) = \cL(0,1)$. In particular, 
\[
\cL(\lambda_1 - \lambda_2, \lambda_2) = (\tilde{\cR}_1^*)^{\otimes (\lambda_1 - \lambda_2)} \otimes (\det\tilde{\cR}^*_2)^{\otimes \lambda_2}
\]
Note that $\det \tilde{\cR}_2^* = \pi_2^*(\det \cR_2^*)$, so by the projection formula, we get 
\[
\rR^i {\pi_2}_* \cL(\lambda_1 - \lambda_2, \lambda_2) = \rR^i {\pi_2}_* (\tilde{\cR}_1^*)^{\otimes (\lambda_1 - \lambda_2)} \otimes (\det \cR_2^*)^{\otimes \lambda_2}
\]
Finally, since $\pi_2$ is a relative $\bP^1$ and $\tilde{\cR}_1$ serves as $\cO(-1)$ for this projective bundle, we get that $\cR^i {\pi_2}_* (\tilde{\cR}^*_1)^{\otimes (\lambda_1 - \lambda_2)}$ is $\Sym^{\lambda_1 - \lambda_2}(\cR_2^*)$ if $i=0$ and is $0$ otherwise. Now use the fact that $\rank \cR_2^* = 2$ so that $\Sym^{\lambda_1 - \lambda_2}(\cR_2^*) \otimes (\det \cR_2^*)^{\otimes \lambda_2} = \bS_\lambda(\cR_2^*)$.

The fact that the cohomology of $\cL(\lambda_1 - \lambda_2, \lambda_2)$ and $\bS_\lambda(\cR_2^*)$ agree now follows from the Leray spectral sequence.
\end{proof}

Using a similar (but easier) argument, we can show that 
\[
\rH^i(\Gr_\gamma(1,V); (\cR_1^*)^{\otimes d}) = \begin{cases} \bS_{[d]} V & i=0 \\ 0 & i>0 \end{cases}.
\]

\subsection{Analogue of determinantal varieties}

Let $E$ be a 2-dimensional vector space and let $X = \hom(E,V)$. For
$0 \le r \le 2$, let $Y_r \subset X$ be the subvariety of linear maps
$\phi$ such that $\phi(E)$ is a $\gamma$-isotropic subspace and $\rank
\phi \le r$. There are 3 orbits of $\GL(E) \times G$ acting on $Y_2$,
and they are classified by the rank of $\phi$.

The Lie algebra $\mathfrak{so}(10)$ contains a subalgebra isomorphic to $\mathfrak{sl}(2) \times \mathfrak{so}(8)$, and $\mathfrak{so}(8)$ contains $\mathfrak{g}_2$ via triality (see \cite[\S 5]{adams} or \cite[\S 20.3]{fultonharris}). Let ${\bf spin}^+(10)$ be one of the half-spinor representations of $\mathfrak{so}(10)$. The highest weight orbit is the spinor variety, which we denoted $\OGr^+(5,10)$ in \S\ref{sec:typeDn}.

The restriction of ${\bf spin}^+(10)$ to $\mathfrak{sl}(2) \times \mathfrak{g}_2$ is $E \otimes (V + K)$. Since we know a description of
the orbits in $E \otimes V$, we get
\begin{align} \label{eqn:spinorsection}
\bP(Y_2) = (E \otimes V) \cap \OGr^+(5,10),
\end{align}
where $\bP(Y_2)$ denotes the projectivization of $Y_2$. 

\begin{theorem}
\begin{compactenum}[\rm (1)]
\item The variety $Y_2$ is irreducible and has codimension $5$. Furthermore, $Y_2$ is Gorenstein and has rational singularities. The variety $Y_1$ is irreducible and has codimension $7$. Furthermore, $Y_1$ has rational singularities.

\item The coordinate ring of $Y_r$ has a filtration with associated graded given by
\[
K[Y] \approx \bigoplus_{\lambda,\ \ell(\lambda) \le r} \bS_\lambda E \otimes \bS_{[\lambda]} V^*.
\]

\item The ideal defining $Y_2$ is minimally generated by $10$ quadrics and the ideal defining $Y_1$ is minimally generated by $24$ quadrics (more details in the proof).

\item The singular locus of both $Y_2$ and $Y_1$ is $Y_0 = \{0\}$.
\end{compactenum}
\end{theorem}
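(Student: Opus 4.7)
The plan is to mirror the type $\rC_n$ treatment of \S\ref{sec:typeCn}: exhibit a vector-bundle desingularization of each $Y_r$ and feed it through the geometric technique of \S\ref{sec:geomtech}. Let $Z_r$ denote the total space of the bundle $\hom(E,\cR_r)$ over the isotropic Grassmannian $\Gr_\gamma(r,V)$, and let $q_r\colon Z_r\to\hom(E,V)$ be the tautological projection. Since a rank-$r$ linear map has a unique $\gamma$-isotropic $r$-plane containing its image, $q_r$ is birational onto $Y_r$, which gives irreducibility together with the codimension count $14-(5+2r)$, namely $5$ for $r=2$ and $7$ for $r=1$. Taking $\eta=E\otimes\cR_r^*$ and $\cV=\cO$ in Theorem~\ref{thm:geom-tech} and combining the Cauchy filtration \eqref{eqn:cauchy-filt} with Proposition~\ref{prop:G2cohomology} (and its $r=1$ analogue) produces
\[
K[Z_r]\approx\bigoplus_{\lambda,\ \ell(\lambda)\le r}\bS_\lambda E\otimes\bS_{[\lambda]}V^*,
\]
together with the vanishing of $\rR^i(q_r)_*\cO_{Z_r}$ for $i>0$. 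Normality follows by copying the Cartan-product argument of \S\ref{sec:typeCn}: the algebra above is generated from $E\otimes V^*$ (the $\lambda=(1)$ piece) and, when $r=2$, also $\bigwedge^2 E\otimes V_{\omega_2}^*$ (the $\lambda=(1,1)$ piece), both of which visibly extend to polynomial functions on $\hom(E,V)$. Therefore $K[Y_r]=K[Z_r]$, and the higher-cohomology vanishing then upgrades this to rational singularities (hence Cohen--Macaulay), proving part (2) and most of part (1).

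For the Gorenstein claim about $Y_2$ I invoke Theorem~\ref{thm:CM-geom}: compute $\cV^\vee=\omega_{\Gr_\gamma(2,V)}\otimes\det\xi^*$ explicitly using Proposition~\ref{prop:G2linebundles} and the triviality of $\det V$, and verify via a Bott-type cohomology computation on $\Fl_\gamma(V)$ that $\rH^i(\Sym(\eta)\otimes\cV^\vee)=0$ for $i>0$ while $\rH^0(\Sym(\eta)\otimes\cV^\vee)$ is isomorphic to $K[Y_2]$ up to a grading shift. This identifies the canonical module with a free rank-$1$ module and upgrades Cohen--Macaulayness to Gorensteinness.

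For part (3), the $\rG_2$-decompositions $\Sym^2 V^*=V_{2\omega_1}^*\oplus K$ and $\bigwedge^2 V^*=V_{\omega_2}^*\oplus V_{\omega_1}^*$ identify the Cartan-product complement in $\Sym^2(E\otimes V^*)$ as
\[
J_2=\bS_2 E\otimes K\ \oplus\ \bigwedge^2 E\otimes V_{\omega_1}^*,
\]
of dimension $3+7=10$: the $\bS_2 E$ summand encodes the three values of $q\circ\phi$ on $\Sym^2 E$, while $\bigwedge^2 E\otimes V_{\omega_1}^*$ encodes the seven contractions $\gamma(\phi(e_1),\phi(e_2),-)\in V^*$. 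All ten functions vanish on $Y_2$ by $\gamma$- and $q$-isotropy. To upgrade set-theoretic vanishing to ideal generation, I appeal to the identification \eqref{eqn:spinorsection}: the ideal of $\OGr^+(5,10)\subset\bP(\spin^+(10))$ is classically generated by the $10$ quadrics spanning the standard representation of $\mathfrak{so}(10)$, and the branching $\mathfrak{so}(10)\supset\mathfrak{sl}_2\times\mathfrak{g}_2$ restricts these to precisely $J_2$ on $\bP(E\otimes V)$. For $Y_1$ the additional constraint $\ell(\lambda)\le 1$ forces the whole of $\bigwedge^2 E\otimes\bigwedge^2 V^*$ into the ideal, contributing an extra $\bigwedge^2 E\otimes V_{\omega_2}^*$ (the ``adjoint-valued $2\times 2$ minors'') of dimension $14$, for a total of $10+14=24$ quadric generators.

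For part (4), both varieties are affine cones and therefore singular at $0$. The map $Z_1\to Y_1$ is an isomorphism over $Y_1\setminus\{0\}$ (a rank-$1$ map has a unique image line), so $Y_1$ is smooth there. For $Y_2$, smoothness on the open rank-$2$ orbit is automatic, and at a rank-$1$ point $\phi_0=(v,0)$ with $v$ a nonzero $\gamma$-isotropic vector I check smoothness by the Jacobian criterion on the $10$ defining quadrics: the naive count suggests rank $6$, but the differential of $\beta(\phi(e_1),\phi(e_2))$ at $\phi_0$ equals $(0,\beta(v,-))$, and $\beta(v,-)$ lies in the span of the differentials $(0,\gamma(v,-,v_k))$ because it vanishes on Anderson's $3$-dimensional subspace $V_v$, which is precisely the kernel of $\gamma(v,-,-)\colon V\to V^*$. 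The true Jacobian rank is thus $5=\codim Y_2$, so $\phi_0$ is a smooth point. I anticipate the main obstacle to be the ideal-generation step in part (3), since neither $Y_2$ nor $Y_1$ is a complete intersection (in contrast with type $\rC_n$), so a careful reduction via the spinor embedding \eqref{eqn:spinorsection} is needed to avoid an ad hoc Hilbert-series comparison.
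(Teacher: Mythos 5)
Your parts (1)--(2) follow the paper's line exactly (desingularization by $\hom(E,\cR_r)$ over $\Gr_\gamma(r,V)$, Theorem~\ref{thm:geom-tech}, the Cauchy filtration plus Proposition~\ref{prop:G2cohomology}, and a Cartan-product argument for normality), so those are fine. Your Jacobian analysis in part (4) is in fact more explicit and more careful than the paper's one-line appeal to the Jacobian criterion, and it is correct: the kernel of $\gamma(v,-,-)$ is $V_v$, the image is $V_v^\perp$, and $\beta(v,-)$ lies in $V_v^\perp$ because $V_v$ is $q$-isotropic and contains $v$, so the Jacobian rank at a rank-one point is $1+4=5=\codim Y_2$.

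The two places where you diverge from the paper are Gorensteinness and ideal generation, and the second one leaves a genuine gap. For Gorensteinness, the paper shows by a Hilbert-series comparison that $\bP(Y_2)=(E\otimes V)\cap\OGr^+(5,10)$ is a reduced codimension-two linear section of $\OGr^+(5,10)$, hence a complete intersection in a Gorenstein variety, which yields Gorensteinness \emph{and} quadratic generation simultaneously. You instead propose a Cohen--Macaulay argument via Theorem~\ref{thm:CM-geom} with $\cV^\vee=\omega_{\Gr_\gamma(2,V)}\otimes\det\xi^*$; one can compute $\omega_{\Gr_\gamma(2,V)}=\cL(0,-3)$ and $\det\xi^*=(\det E^*)^{\otimes 5}\otimes\cL(0,2)$, so $\cV^\vee=(\det E^*)^{\otimes 5}\otimes\cL(0,-1)$, and then $\rH^0(\Sym(\eta)\otimes\cV^\vee)\cong(\det E)^{-4}\otimes K[Y_2]$ with vanishing higher cohomology. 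This does recover Gorensteinness, but note that Theorem~\ref{thm:CM-geom} alone only gives Cohen--Macaulayness; you additionally need the fact (implicit in \cite[Corollary 5.1.5]{weyman}) that $M(\cV^\vee)$ is the canonical module, and you need to carry out the Bott computation rather than merely announce it.

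The real gap is part (3). You correctly identify the $10$ quadrics for $Y_2$ as $\Sym^2 E\otimes\langle\beta\rangle\oplus\bigwedge^2 E\otimes V^*_{\omega_1}$ (and the $24$ for $Y_1$), and you correctly observe that they are the restrictions of the ten $\mathfrak{so}(10)$-quadrics cutting out $\OGr^+(5,10)$. But ``restriction of quadrics cutting out $X$'' does not by itself give ideal generation for a linear section $X\cap L$: one needs (i) that the cutting linear forms form a regular sequence on the (Cohen--Macaulay) cone over $X$ --- i.e., the section is proper --- and (ii) that the resulting complete-intersection scheme is reduced, which is what the paper's Hilbert-series comparison against the already-computed character of $K[Y_2]$ establishes. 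You explicitly say you want to avoid the Hilbert-series step, but you offer no replacement, so as written this is a non sequitur. You also do not address why the $24$ quadrics generate the ideal of $Y_1$ (the paper reduces this to the Segre embedding of a quadric with $\bP^1$), nor the minimality of the generating sets. Filling in (i) is a quick dimension count ($\dim\OGr^+=10$, codimension $2$, $\dim\bP(Y_2)=8$); for (ii) you should either do the Hilbert-series comparison as in the paper or give a direct reducedness argument, e.g.\ by combining Cohen--Macaulayness of the complete intersection with generic reducedness, which again requires knowing the expected Hilbert function.
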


\begin{proof}
The case $r=0$ is trivial, so we focus on $r=1,2$. The variety $Y_r$ has a desingularization by the bundle $\hom(E,\cR_r)$ over $\Gr_\gamma(r,V)$. This shows that $Y_r$ is irreducible with the claimed codimension. Let $\tilde{Y}_r$ denote the normalization of $Y_r$. By \S\ref{sec:geomtech}, we have
\[
K[\tilde{Y}_r] = \rH^0(\Gr_\gamma(r,V); \Sym(E \otimes \cR^*_r))
\]
The multiplication map
\[
\Sym^i(E \otimes \cR^*_r) \otimes (E \otimes \cR^*_r) \to \Sym^{i+1}(E \otimes \cR^*_r)
\]
is surjective and its kernel has a good filtration. In particular the kernel does not have higher cohomology by Proposition~\ref{prop:G2cohomology}, so taking sections of the multiplication map preserves surjectivity. In particular, taking sections gives the multiplication map for $K[\tilde{Y}_r]$, and we conclude that it is generated over $A$ in degree 0. Hence $\tilde{Y}_r = Y_r$, and $Y_r$ is normal. Finally, $\rH^i(\Gr_\gamma(r,V); \Sym(E \otimes \cR_r^*)) = 0$ for $i>0$ (use \eqref{eqn:cauchy-filt} to get $\Sym(E \otimes \cR_r^*) \approx \bigoplus_{\ell(\lambda) \le r} \bS_\lambda(E) \otimes \bS_\lambda(\cR_r^*)$ and then apply Proposition~\ref{prop:G2cohomology}), so $Y_r$ has rational singularities.

Via a Hilbert series calculation, one can show that \eqref{eqn:spinorsection} is a reduced complete intersection. In particular, $Y_2$ is Gorenstein, and the ideal of $Y_2$ is generated by quadrics. Then it is easy to see that the additional equations needed to cut out $Y_1$ are the $2 \times 2$ minors. Indeed, $\bP(Y_1)$ is the highest weight orbit in $\bP(X)$, which in this case is the Segre embedding of a quadric with $\bP^1$.

Finally, we calculate the singular locus. Since the $\GL(E) \times G$-orbits on $Y_2$ are classified by the rank of the map, the statement about $Y_1$ is clear. We just need to show that $Y_2$ is not singular along $Y_1$. This is done by picking a single rank $1$ map in $Y_1$ and using the Jacobian criterion. We finish by describing the ideal in a basis-free way. 

For $r=1$, the ideal is generated by the $2 \times 2$ minors $\bigwedge^2 E \otimes \bigwedge^2 V^*$ and the polarization of the quadratic form $\Sym^2 E \otimes \langle q \rangle$, both in degree $2$. For $r=2$, the ideal is generated by $\bigwedge^2 E \otimes V^* \subset \bigwedge^2 E \otimes \bigwedge^2 V^*$ and $\Sym^2 E \otimes \langle \beta \rangle$, both in degree $2$. The first set of generators are obtained by using the trilinear form $\gamma$.
\end{proof}

\begin{theorem} \label{thm:G2minfreeres} 
Over a field of characteristic different from $2$, the graded Betti table of $Y_2$ is 
\small \begin{verbatim}
       0  1  2  3  4 5
total: 1 10 16 16 10 1
    0: 1  .  .  .  . .
    1: . 10 16  .  . .
    2: .  .  . 16 10 .
    3: .  .  .  .  . 1
\end{verbatim} \normalsize
Over a field of characteristic $2$, the graded Betti table of $Y_2$ is
\small \begin{verbatim}
       0  1  2  3  4 5
total: 1 10 17 17 10 1
    0: 1  .  .  .  . .
    1: . 10 16  1  . .
    2: .  .  1 16 10 .
    3: .  .  .  .  . 1
\end{verbatim} \normalsize
When the field has characteristic $0$, the terms of the minimal free resolution ${\bf F}_\bullet$ of $Y_2$ are
\begin{align*}
  \begin{array}{ll}
    {\bf F}_0 = A & 
    {\bf F}_1 = (\det E \otimes V \oplus \Sym^2 E) \otimes A(-2)\\
    {\bf F}_2= \det E \otimes E \otimes (K \oplus V) \otimes A(-3) & 
    {\bf F}_3 = (\det E)^2 \otimes E \otimes (K \oplus V) \otimes A(-5)\\
    {\bf F}_4 = (((\det E)^3 \otimes V) \oplus (\det E)^2 \otimes
    \Sym^2 E) \otimes A(-6) & 
    {\bf F}_5 = (\det E)^4 \otimes A(-8).
\end{array}
\end{align*}
\end{theorem}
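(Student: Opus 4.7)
The plan is to apply the geometric technique (Theorem~\ref{thm:geom-tech}) to the desingularization $q\colon Z = \hom(E, \cR_2) \to Y_2$ over $V' := \Gr_\gamma(2, V)$. Using the bilinear form $\beta$ to identify $(V/\cR_2)^* \cong \cR_2^\perp = \cR_5$, the bundle $\xi$ becomes $E \otimes \cR_5$ of rank $10$, so the minimal free resolution takes the form
\[
\bF_i = \bigoplus_{j \ge 0} \rH^j\bigl(V';\, \bigwedge\nolimits^{i+j}(E \otimes \cR_5)\bigr) \otimes A(-i-j),
\]
and the Cauchy decomposition yields $\bigwedge^d(E \otimes \cR_5) \approx \bigoplus_{|\lambda|=d,\,\ell(\lambda)\le 2} \bS_\lambda E \otimes \bS_{\lambda^\dagger}\cR_5$. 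Since $\xi$ has rank $10$, only finitely many partitions contribute.

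The main computational step is to determine $\rH^j(V'; \bS_\mu \cR_5)$ for partitions $\mu$ with $\mu_1 \le 2$ and $\ell(\mu) \le 5$. The cleanest route uses the short exact sequence $0 \to \cR_5 \to V \to \cR_2^* \to 0$ on $V'$ (where $V$ denotes the trivial rank-$7$ bundle with fiber the ambient $G_2$-module): the induced filtrations on $\bigwedge^d V$ and $\Sym^d V$ produce short exact sequences of bundles whose outer terms involve $\bS_\nu \cR_5 \otimes \bS_\tau \cR_2^*$, and the cohomology of $\bS_\tau \cR_2^*$ is supplied by Proposition~\ref{prop:G2cohomology}. The remaining work is then linear algebra with known $G_2$-modules. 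A cross-check is to pull back to $\Fl_\gamma(V)$, filter $\cR_5$ using Proposition~\ref{prop:G2linebundles}, and apply Bott's algorithm for $G_2$ to each line bundle quotient.

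To cut down the work, one exploits the already-established Gorenstein property: an equivariant canonical-module calculation (using $\omega_A = A(-14) \otimes (\det E)^{7}$, since $\det V$ is trivial because $G \subset \SO(V)$) forces the self-duality
\[
\bF_i \;\cong\; \bF_{5-i}^{\,*} \otimes (\det E)^{4} \otimes A(-8)
\]
as $\GL(E) \times G$-modules. Together with the already-known form of $\bF_1$ from the previous theorem, this pins down $\bF_0, \bF_1, \bF_4, \bF_5$, leaving only $\bF_2$ (with $\bF_3$ then obtained by duality). Concretely, it suffices to verify that $\rH^1(V'; \bS_{(2,1)}\cR_5) = K \oplus V$ and $\rH^1(V'; \bigwedge^3 \cR_5) = 0$. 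The main obstacle is the cohomology computation itself: because $\bS_\mu \cR_5$ is not a single line bundle on $V'$, naive spectral sequences from the flag-variety filtration can have non-trivial differentials; the short-exact-sequence approach above sidesteps them by reducing to bundles whose cohomology is immediately known.

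For the Betti tables, the characteristic-$0$ equivariant description yields total Betti numbers $(1, 10, 16, 16, 10, 1)$. These are upper bounds in every characteristic by semicontinuity, and are realized in every characteristic $\ne 2$, giving the first Betti table. In characteristic $2$, a direct Macaulay2 computation exhibits an additional degree-$4$ relation at position $2$ and an additional degree-$4$ syzygy at position $3$ (each of rank $1$), yielding the second Betti table; these extra terms reflect a Frobenius-induced coincidence in the symmetric square of the vector representation that cancels in other characteristics.
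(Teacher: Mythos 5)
Your approach to the characteristic-$0$ equivariant structure is genuinely different from the paper's. The paper reads off the graded Betti numbers from a Macaulay2 computation based on \eqref{eqn:spinorsection} and then recovers the $\GL(E)\times G$-module structure by an Euler-characteristic elimination, degree by degree, using the known decompositions of $A$ and of $K[Y_2]$. You instead propose computing the resolution directly via Theorem~\ref{thm:geom-tech} applied to the desingularization over $\Gr_\gamma(2,V)$, with $\xi = E\otimes\cR_5$, and then halving the work via Gorenstein self-duality $\bF_i \cong \bF_{5-i}^*\otimes(\det E)^4\otimes A(-8)$. This is a legitimate route, and in principle more intrinsic, but note two things. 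First, as you describe it the geometric technique supplements rather than replaces the computer calculation: you use the known ranks to reduce the cohomology you need to check to just $\rH^1$ of $\bS_{(2,1)}\cR_5$ and $\bigwedge^3\cR_5$, so the Macaulay2 step is still doing the heavy lifting. Second, the actual cohomology computations (whether by your short-exact-sequence strategy or by Bott's algorithm on $\Fl_\gamma(V)$) are the hard part and are left unexecuted; if you want the geometric technique to be self-contained you would need to carry out the full calculation of $\rH^\bullet(\Gr_\gamma(2,V);\bS_\mu\cR_5)$ for all $\mu$ with $\mu_1\le 2$, $\ell(\mu)\le 5$, which is a nontrivial Bott-algorithm exercise in type $\rG_2$.

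There is one genuine error. Your semicontinuity claim is backwards: graded Betti numbers are upper semicontinuous in flat families, so the characteristic-$0$ values $(1,10,16,16,10,1)$ are \emph{lower} bounds for the Betti numbers in positive characteristic, not upper bounds. Indeed they jump up to $(1,10,17,17,10,1)$ in characteristic $2$. Consequently ``realized in every characteristic $\ne 2$'' does not follow from semicontinuity in either direction; it needs to be verified, e.g.\ by computing over $\bZ$ and checking that the relevant invariant factors of the differentials are units away from $2$. The paper also relies on a machine computation for this, so the two proofs are comparable at this point, but your stated justification is not correct as written.
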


\begin{proof}
The statement about the graded Betti table follows from \eqref{eqn:spinorsection} and a Macaulay2 calculation. To get the $\GL(E) \times G$-action on the terms in characteristic $0$, we can use the fact that we know the ranks in the graded Betti table and the terms in the coordinate ring of $K[Y_2]$: so if we calculate the Euler characteristic of the complex in a fixed degree (and work by induction on degree), then the representation-theoretic structure of all but one of the terms will be known.
\end{proof}

\begin{corollary}
Over a field of characteristic $0$, the isotypic component of ${\bf F}_\bullet$ is an exact complex $C(\mu)_\bullet$ of representations of ${\rm G}_2$ resolving the representation $V_{[\mu ]}$ by Schur functors on $V$:
\begin{align*}
0 \to \bS_{\mu / (4,4)} V \to (\bS_{\mu / (3,3)} V \otimes V) \oplus \bS_{\mu / (4,2)} V \to \bS_{\mu / (3,2)} V \otimes (K \oplus V) \to \\
\bS_{\mu / (2,1)} V \otimes (K \oplus V) \to (\bS_{\mu / (1,1)} V \otimes V) \oplus \bS_{\mu / (2)} V \to \bS_\mu V \to V_{(\mu_1 - \mu_2, \mu_2)} \to 0.
\end{align*}
\end{corollary}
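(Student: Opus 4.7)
The plan is to obtain $C(\mu)_\bullet$ by taking the $\bS_\mu E$-isotypic component of the minimal free resolution $\bF_\bullet \to K[Y_2] \to 0$ supplied by Theorem~\ref{thm:G2minfreeres}, reproducing the strategy used in Proposition~\ref{prop:typeClwood} for type $\rC_n$. In characteristic $0$ the group $\GL(E)$ is linearly reductive, so the isotypic projection is exact and it suffices to compute its effect term by term.

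First I would rewrite each $\GL(E)$-multiplicity space appearing in Theorem~\ref{thm:G2minfreeres} as a single Schur functor $\bS_\nu E$. Because $\dim E = 2$, every irreducible $\GL(E)$-module is uniquely a $\det E$-twist of a symmetric power, so, using $\det E = \bS_{(1,1)} E$, the multiplicity spaces in $\bF_0, \dots, \bF_5$ are $\bS_{(0)}E$; $\bS_{(1,1)}E$ and $\bS_{(2)}E$; $\bS_{(2,1)}E$; $\bS_{(3,2)}E$; $\bS_{(3,3)}E$ and $\bS_{(4,2)}E$; and $\bS_{(4,4)}E$, paired respectively with the $G$-factors $K$; $V$ and $K$; $K \oplus V$; $K \oplus V$; $V$ and $K$; and $K$.

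Next, for each term $\bS_\nu E \otimes U \otimes A(-d)$ with $U$ a $G$-representation, I would decompose $A$ via the Cauchy formula \eqref{eqn:cauchy-filt} as $A \cong \bigoplus_\lambda \bS_\lambda E \otimes \bS_\lambda V$ and use Littlewood--Richardson, as recalled in the proof of Proposition~\ref{prop:typeClwood} (see \cite[Theorem 2.3.6]{weyman}), to conclude
\[
\bigl(\bS_\nu E \otimes U \otimes A(-d)\bigr)_{\bS_\mu E\text{-isotypic}} \;=\; \bS_\mu E \otimes \bS_{\mu/\nu} V \otimes U.
\]
Stripping the $\bS_\mu E$-multiplicity factor leaves the $G$-summand $\bS_{\mu/\nu} V \otimes U$ in homological degree $i$; assembling these contributions across the six homological degrees produces exactly the complex displayed in the statement.

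Finally, the cokernel is handled by the second part of the preceding theorem: the $\bS_\mu E$-isotypic component of $K[Y_2]$ equals $\bS_{[\mu]} V^*$, and since every irreducible representation of $\rG_2$ is self-dual this coincides with $V_{[\mu]} = V_{(\mu_1 - \mu_2, \mu_2)}$. There is no real obstacle; the only point requiring care is the correct identification of each Betti multiplicity space with a Schur functor $\bS_\nu E$, which is forced by $\dim E = 2$, and the verification that the resulting skew-Schur summands are assembled in the claimed homological degrees. Exactness of $C(\mu)_\bullet \to V_{[\mu]} \to 0$ is then automatic from the exactness of $\bF_\bullet \to K[Y_2] \to 0$ and the exactness of isotypic projection in characteristic zero.
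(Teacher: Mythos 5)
Your proposal is correct and takes essentially the same approach as the paper's proof, which simply observes that the displayed complex is the $\bS_\mu E$-isotypic component of the minimal free resolution $\bF_\bullet$ from Theorem~\ref{thm:G2minfreeres}, citing $A = \bigoplus_\lambda \bS_\lambda E \otimes \bS_\lambda V$ and $K[Y_2] = \bigoplus_\lambda \bS_\lambda E \otimes \bS_{[\lambda]}V$. You have merely spelled out the bookkeeping (identifying each $\GL(E)$-multiplicity space with a Schur functor $\bS_\nu E$ using $\dim E = 2$, and invoking Littlewood--Richardson to extract the skew Schur functor) that the paper leaves implicit.
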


\begin{proof}
Since $A = \bigoplus_\mu \bS_\mu E \otimes \bS_\mu V$, and the coordinate ring of $Y_2$ is $\bigoplus_\mu \bS_\mu E \otimes \bS_{[\mu]} V$ where $\bS_{[\mu]} V \cong V_{(\mu_1 - \mu_2, \mu_2)}$, this complex is the $\bS_\mu E$-isotypic component of the resolution $\bF_\bullet$ of $K[Y_2]$ in Theorem~\ref{thm:G2minfreeres}.
\end{proof}

For the next result, we abbreviate $\bS_\lambda E \otimes V_\mu
\otimes A(-i)$ by $(\lambda_1, \lambda_2; \mu_1, \mu_2)(-i)$.

\begin{proposition} Over a field of characteristic $0$, the terms of
  the minimal free resolution ${\bf F}_\bullet$ of $Y_1$ are
  \begin{align*}
    {\bf F}_0 &= (0,0;0,0)\\
    {\bf F}_1 &= (2,0;0,0)(-2) + (1,1;1,0)(-2) + (1,1;0,1)(-2) \\
    {\bf F}_2 &= (2,1;0,0)(-3) + (3,0;1,0)(-3) + (2,1;2,0)(-3) \\
    {\bf F}_3 &= (3,1;0,0)(-4) + (2,2;1,0)(-4) + (3,1;1,0)(-4) +
    (3,1;2,0)(-4) + (2,2;0,1)(-4)\\
    {\bf F}_4 &= (4,1;1,0)(-5) + (4,1;0,1)(-5) + (3,3;0,0)(-6) +
    (3,3;2,0)(-6) \\
    {\bf F}_5 &= (3,3;1,0)(-6) + (4,2;1,0)(-6) + (4,3;1,0)(-7) +
    (4,3;0,1)(-7) \\
    {\bf F}_6 &= (4,3;0,0)(-7) + (5,2;0,0)(-7) + (6,2;1,0)(-8) \\
    {\bf F}_7 &= (6,3;0,0)(-9)
  \end{align*}

In particular, the graded Betti table of $Y_1$ is 
\small 
\begin{verbatim}
       0  1  2   3   4  5  6 7
total: 1 24 84 126 119 77 27 4
    0: 1  .  .   .   .  .  . .
    1: . 24 84 126  84 35  6 .
    2: .  .  .   .  35 42 21 4
\end{verbatim} 
\end{proposition}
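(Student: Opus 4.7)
The plan is to apply Theorem~\ref{thm:geom-tech} to the desingularization $Z_1 = \hom(E, \cR_1) \to Y_1$, which is the total space of a rank-$2$ vector bundle over $\Gr_\gamma(1,V) \cong G/P$. In this setup $\xi = E \otimes (V/\cR_1)^*$ has rank $12$, and the theorem directly yields the \emph{minimal} free resolution
\[
\bF_i = \bigoplus_{j \ge 0} \rH^j\bigl(\Gr_\gamma(1,V);\, \textstyle\bigwedge^{i+j}\xi\bigr) \otimes A(-i-j),
\]
so no separate minimality argument is required. Applying Cauchy's formula (\cite[Theorem 2.3.2]{weyman}) together with $\dim E = 2$ reduces the task to computing
\[
\rH^\bullet\bigl(\Gr_\gamma(1,V);\, \bS_{\lambda^\dagger}((V/\cR_1)^*)\bigr)
\]
for each two-row partition $\lambda = (a, b)$ with $a \le 6 = \rank(V/\cR_1)^*$; such a group in cohomological degree $j$ contributes the summand $\bS_\lambda E \otimes \rH^j \otimes A(-a-b)$ to $\bF_{a + b - j}$.

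Each of these cohomology groups is computed by the relative Borel--Weil--Bott theorem. Since $\pi_1 \colon \Fl_\gamma(V) \to \Gr_\gamma(1,V)$ is a $\bP^1$-bundle with $\rR^\bullet \pi_{1*} \cO_{\Fl_\gamma(V)} \cong \cO_{\Gr_\gamma(1,V)}$, the Leray spectral sequence identifies the cohomology on $\Gr_\gamma(1,V)$ with that of the pullback on $\Fl_\gamma(V)$. There the pulled-back bundle $(V/\cR_1)^*$ carries a complete flag whose successive quotients are the six line bundles read off from Proposition~\ref{prop:G2linebundles}. The Schur functor $\bS_{\lambda^\dagger}$ applied to such a filtered bundle inherits an induced filtration whose associated graded is a direct sum of tensor products of these line bundles (with Littlewood--Richardson multiplicities), and the cohomology of each line-bundle summand is determined by Bott's algorithm in type $\rG_2$.

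The main obstacle is organizational rather than conceptual: one must run Bott's algorithm for each of the finitely many relevant $(a,b)$ (bounded by $a \le 6$ and $a + b \le 12$) and verify that the spectral sequence computing $\rH^\bullet\bigl(\bS_{\lambda^\dagger}((V/\cR_1)^*)\bigr)$ from its associated graded degenerates at $E_1$. This degeneration holds because the Bott cohomologies of the various line-bundle summands land in distinct $G$-isotypic components, leaving no room for cancelling differentials. Collecting the surviving contributions indexed by $i = (a+b) - j$ then produces the stated list $\bF_0, \dots, \bF_7$, and summing dimensions via $\dim \bS_{(a, b)} E = a - b + 1$ and the Weyl dimension formula for irreducible $\rG_2$-representations yields the Betti table. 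As in Theorem~\ref{thm:G2minfreeres}, a parallel Macaulay2 computation of the graded Betti numbers provides an independent consistency check.
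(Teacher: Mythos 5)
The paper's own proof is a single sentence: the resolution and Betti table are verified by a Macaulay2 computation. Your proposal instead lays out a conceptual geometric-technique argument, which is a genuinely different route and, if carried out, would be more illuminating than the computer verification. However, there are two problems.

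\textbf{The degeneration claim is unjustified (and the stated reason is wrong).} You assert that the spectral sequence computing $\rH^\bullet(\bS_{\lambda^\dagger}((V/\cR_1)^*))$ from its filtration by line bundles degenerates at $E_1$ ``because the Bott cohomologies of the various line-bundle summands land in distinct $G$-isotypic components.'' Already for $\lambda = (2,0)$, i.e.\ $\bS_{\lambda^\dagger} = \bigwedge^2$, the associated graded pieces are $L_i^*\otimes L_j^*$ for $2 \le i < j \le 7$, and reading weights off Proposition~\ref{prop:G2linebundles} one finds repeated weights (e.g.\ $\cL(0,0)$ appears twice, as do $\cL(-2,1)$, $\cL(1,-1)$, and $\cL(-1,0)$), so the $G$-isotypic components are certainly not distinct. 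Equal line bundles are harmless (same cohomological degree, so no differential between them), but the real danger is two \emph{different} weights whose $\rho$-shifts are Weyl-conjugate via elements of lengths differing by one — that gives the same irreducible in consecutive degrees, which is exactly where a spectral-sequence differential can be nonzero. You have not ruled this out. Indeed, the paper itself confronts precisely this issue in the proof of Theorem~\ref{thm:3copf4quad} and handles it by a separate argument that such terms cannot be minimal generators; degeneration is not free and your stated justification does not establish it.

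\textbf{The computation is described but not performed.} Even granting degeneration, the proof of this Proposition requires actually running Bott's algorithm over all $(a,b)$ with $a \le 6$, $b \le a$, tracking multiplicities in the Littlewood--Richardson decompositions of the graded pieces, and assembling the resulting terms into $\bF_0,\dots,\bF_7$. You describe this procedure and assert that ``collecting the surviving contributions \ldots\ produces the stated list,'' but you do not carry it out, so the proposal as written does not establish the stated answer. A workable compromise — and what the paper does for $Y_2$ in Theorem~\ref{thm:G2minfreeres} — would be to combine the (computationally verified) Betti numbers with the known equivariant Hilbert series of $K[Y_1]$ and use Euler-characteristic bookkeeping in each internal degree to recover the $\GL(E)\times G$-module structure; that sidesteps the spectral-sequence subtlety entirely while still giving the full equivariant statement.
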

\normalsize
\begin{proof} 
This can be done with Macaulay2.
\end{proof}


\section{Type ${\rm F}_4$.}

In this section, we will assume that $K$ is a field of characteristic $0$.

\subsection{Description of homogeneous spaces}

For this section, we follow \cite[Example 9.1]{carr}. For $G$ of type ${\rm F}_4$, let $V$ be its $26$ dimensional representation. This is $V_{\omega_4}$, according to the following labeling:
\[
\xymatrix @-1.2pc { 1 \ar@{-}[r] & 2 \ar@{=>}[r] & 3 \ar@{-}[r] & 4}.
\]
As usual, we will take $A = \Sym(E \otimes V)$ throughout, where $E$ will be a vector space whose dimension is specified in each subsection.

We also use the notation $V_{(a,b,c,d)}$ to denote the module with highest weight $a\omega_1 + b\omega_2 + c\omega_3 + d\omega_4$. All finite-dimensional representations of $G$ are self-dual. There is a commutative $G$-invariant multiplication $\Sym^2 V \to V$, which we will denote by $\#$. A subspace $W \subset V$ is {\bf $\#$-isotropic} if $x\# y=0$ for all $x,y \in W$. Then $G/P_i \subset \bP(V_{\omega_i})$ is the space of $\#$-isotropic $d_i$-dimensional subspaces, where $d_i$ is given by the following table:
\[
\begin{array}{c|c|c}
  i & d_i  & \dim G/P_i \\
  \hline
  1 & 6 & 15\\
  2 & 3 & 20\\
  3 & 2 & 20\\
  4 & 1 & 15
\end{array}
\]
So for $i=1$, we have $6$-dimensional $\#$-isotropic subspaces. The Levi subgroup of the parabolic which stabilizes such a subspace is of type ${\rm C}_3$. So such subspaces inherit a symplectic form and a point of $G/B$ is a choice of such a $6$-dimensional space plus a complete isotropic flag inside of it. The other homogeneous spaces are obtained by forgetting some subspaces.

Let $\Fl(V) = G/B$, which has dimension $24$. From the description above, it has a partial flag of tautological subbundles $\cR_1 \subset \cR_2 \subset \cR_3 \subset \cR_6 \subset V$. Then $\cR_6$ is a symplectic bundle, i.e., we have a map $\bigwedge^2 \cR_6 \to \cM$, where $\cM$ is some line bundle. To figure out which one, we first note that $\det \cR_6 = \cM^{\otimes 3}$, and that $\det \cR_6^*$ gives an embedding of $G/P_1$ into $\bP(\bigwedge^6 V)$. But $\bigwedge^6 V$ only contains two summands whose highest weights are multiples of 3: $V_{3\omega_1}$ and $V_{3\omega_4}$, and the Picard group of $G/P_1$ is generated by $\cL(1,0,0,0)$, so we conclude that $\cM = \cL(-1,0,0,0)$. In particular, we can define $\cR_4 = \cR_2^\perp$ and $\cR_5 = \cR_1^\perp$ (orthogonal complements are defined in $\cR_6$). Also, the projection $G/B \to G/P_1$ identifies $G/B$ with the relative isotropic flag variety ${\bf I}\Fl(\cR_6)$.

Let $\cL(a,b,c,d)$ be the line bundle whose sections are $V^*_{(a,b,c,d)} \cong V_{(a,b,c,d)}$. Using the fact that $G/P_3 \subset \bP(V_{\omega_3}) \subset \bP(\bigwedge^2 V)$ and $G/P_2 \subset \bP(V_{\omega_2}) \subset \bP(\bigwedge^3 V)$, where the embeddings are given by $\det \cR^*_i$, we get

\begin{proposition} \label{prop:F4linebundles}
In the above notation we have
\begin{align*}
\cR_1 \cong \cL(0,0,0,-1), &\quad& \cR_2/\cR_1 \cong \cL(0,0,-1,1), &\quad& \cR_3/\cR_2 \cong \cL(0,-1,1,0),\\
\cR_4/\cR_3 \cong \cL(-1,1,-1,0), &\quad& \cR_5/\cR_4 \cong \cL(-1,0,1,-1), &\quad& \cR_6/\cR_5 \cong \cL(-1,0,0,1).
\end{align*}
\end{proposition}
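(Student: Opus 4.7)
The plan is to use the two pieces of structural information already established in the excerpt: (i) the embeddings $G/P_i \hookrightarrow \bP(V_{\omega_i})$ for $i=2,3,4$ give us $\cR_1^* \cong \cL(0,0,0,1)$, $\det\cR_2^* \cong \cL(0,0,1,0)$, and $\det\cR_3^* \cong \cL(0,1,0,0)$, and (ii) the symplectic form on $\cR_6$ takes values in the line bundle $\cM = \cL(-1,0,0,0)$. Everything else should fall out from bookkeeping with these facts and the identity $\det\cR_i = \det\cR_{i-1} \otimes (\cR_i/\cR_{i-1})$.

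The first three identifications are immediate. For $\cR_1$, use (i) directly. For $\cR_2/\cR_1$, divide $\det\cR_2^* = \cL(0,0,1,0)$ by $\cR_1^* = \cL(0,0,0,1)$; this yields $\cL(0,0,-1,1)^{-1}$ dualized, i.e.\ $\cR_2/\cR_1 \cong \cL(0,0,-1,1)$. For $\cR_3/\cR_2$, divide $\det\cR_3^* = \cL(0,1,0,0)$ by $\det\cR_2^*$ in the analogous way to obtain $\cL(0,-1,1,0)$.

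The next step is to exploit the $\#$-orthogonality on $\cR_6$. Since $\cR_6$ carries a nondegenerate symplectic form with values in $\cM$, we have a canonical isomorphism $\cR_6 \cong \cR_6^* \otimes \cM$, and for any subbundle $\cS\subset\cR_6$ the quotient $\cR_6/\cS^\perp$ is identified with $\cS^*\otimes\cM$. Applied to $\cS = \cR_1$ and $\cS=\cR_2$, this yields
\[
\cR_6/\cR_5 \cong \cR_1^*\otimes\cM, \qquad \cR_5/\cR_4 \cong (\cR_2/\cR_1)^*\otimes\cM,
\]
and plugging in the previously identified line bundles and $\cM=\cL(-1,0,0,0)$ gives the asserted values $\cL(-1,0,0,1)$ and $\cL(-1,0,1,-1)$.

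The remaining (and conceptually slightly subtler) step is $\cR_4/\cR_3$. Since $\cR_3$ is a Lagrangian subbundle of $\cR_6$, we have $\cR_3^\perp = \cR_3$, so $\cR_4/\cR_3 = \cR_2^\perp/\cR_3^\perp \cong (\cR_3/\cR_2)^*\otimes \cM$, which evaluates to $\cL(0,1,-1,0)\otimes\cL(-1,0,0,0) = \cL(-1,1,-1,0)$. As a consistency check, one can recompute $\det\cR_4$ from the short exact sequence $0\to\cR_4\to\cR_6\to\cR_2^*\otimes\cM\to 0$ and divide by $\det\cR_3$ to recover the same answer. The main (minor) obstacle in the whole argument is keeping the dualities and the $\cM$-twists straight; once the identity $\cM\cong\cL(-1,0,0,0)$ from the excerpt and the Lagrangian property of $\cR_3$ are in hand, the verification is entirely mechanical.
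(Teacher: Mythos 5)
Your proof is correct and follows essentially the same route the paper intends: the text immediately preceding the proposition supplies $\cM = \cL(-1,0,0,0)$ together with the embeddings $G/P_2 \subset \bP(V_{\omega_2})$ and $G/P_3 \subset \bP(V_{\omega_3})$ via $\det\cR_3^*$ and $\det\cR_2^*$, and the paper's one-line proof (``similar to Proposition~\ref{prop:G2linebundles}'') is exactly the strategy you carry out in full: use those embeddings for $\cR_1$, $\cR_2/\cR_1$, $\cR_3/\cR_2$ and the $\cM$-valued symplectic duality $\cS^\perp$ together with $\cR_i^\perp = \cR_{6-i}$ for the top three quotients.
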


\begin{proof}
The proof is similar to the proof of Proposition~\ref{prop:G2linebundles}.
\end{proof}

\subsection{6 copies of $V$.}

Let $E$ be a 6-dimensional vector space. Let $X = \hom(E, V)$ and let $Y \subset X$ be the subvariety which is the image of the vector bundle $\hom(E, \cR_6)$ over $G/P_1$. (Here we consider the natural map $\hom(E,\cR_6) \subset X \times G/P_1 \to X$.)

Let $U$ be a 6-dimensional symplectic vector space. Given a representation $\bS_\lambda U$ of $\GL(U)$, let $\gamma_{\lambda, \mu}$ denote the multiplicity of $\bS_{[\mu]} U$ upon branching to
$\Sp(U)$, i.e., $\bS_\lambda U = \bigoplus_\mu (\bS_{[\mu]} U)^{\oplus \gamma_{\lambda, \mu}}$.

\begin{proposition} \label{prop:F4cohomology}
Let $\lambda$ be a partition. Then 
\begin{align*}
\rH^i(G/P_1; \bS_\lambda \cR_6^*) = \begin{cases} \bigoplus_\mu V_{((|\lambda| -  |\mu|)/2, \mu_3, \mu_2 - \mu_3, \mu_1 - \mu_2)}^{\oplus \gamma_{\lambda, \mu}} & i=0\\
0 & i>0 \end{cases}.
\end{align*}
\end{proposition}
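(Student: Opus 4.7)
The plan is to pull the computation back to $G/B$ via the projection $\pi : G/B \to G/P_1$, which realises $G/B$ as the relative full isotropic flag variety of the symplectic bundle $\cR_6$ over $G/P_1$ (fibers of Levi type $\rC_3$), and then invoke Bott's theorem on $G/B$.

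The first step is to decompose $\bS_\lambda \cR_6^*$ as a bundle under the Levi $\rC_3$. Summing the line bundles in Proposition~\ref{prop:F4linebundles} gives $\det \cR_6 = \cL(-3,0,0,0)$; non-degeneracy of $\omega^3$ then forces the symplectic form on $\cR_6$ to be a map $\bigwedge^2 \cR_6 \to \cL(-1,0,0,0)$ and dually the form on $\cR_6^*$ to land in $\cL(1,0,0,0)$. Globalising the classical branching $\GL_6 \to \Sp_6$ and tracking this line-bundle-valued form yields the $F_4$-equivariant decomposition
\begin{equation*}
\bS_\lambda \cR_6^* \;\cong\; \bigoplus_\mu \bigl(\bS_{[\mu]} \cR_6^* \otimes \cL\bigl((|\lambda|-|\mu|)/2,\, 0,\, 0,\, 0\bigr)\bigr)^{\oplus \gamma_{\lambda,\mu}},
\end{equation*}
which is well-defined because $|\lambda|-|\mu|$ is non-negative and even whenever $\gamma_{\lambda,\mu}\neq 0$.

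The second step realises each summand as the pushforward of an $F_4$-line bundle from $G/B$. Using the successive quotients in Proposition~\ref{prop:F4linebundles}, the line bundle $\cR_1^{-\mu_1}\otimes(\cR_2/\cR_1)^{-\mu_2}\otimes(\cR_3/\cR_2)^{-\mu_3}$ equals $\cL(0,\mu_3,\mu_2-\mu_3,\mu_1-\mu_2)$ on $G/B$ and restricts on each fiber of $\pi$ to the Borel--Weil line bundle on $\Sp_6/B$ for the dominant $\rC_3$-weight $\mu=(\mu_1,\mu_2,\mu_3)$. Relative Kempf vanishing therefore gives $\pi_*\cL(0,\mu_3,\mu_2-\mu_3,\mu_1-\mu_2) = \bS_{[\mu]} \cR_6^*$ with no higher direct images. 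Twisting by the pullback of $\cL((|\lambda|-|\mu|)/2,0,0,0)$ from $G/P_1$ and using the projection formula, the $F_4$-line bundle $\cL((|\lambda|-|\mu|)/2,\mu_3,\mu_2-\mu_3,\mu_1-\mu_2)$ on $G/B$ pushes down to the $\mu$-summand above, again with vanishing higher direct images.

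The final step is Kempf vanishing on $G/B$: the weight $((|\lambda|-|\mu|)/2,\mu_3,\mu_2-\mu_3,\mu_1-\mu_2)$ is dominant in all four coordinates (the first because $|\lambda|\ge|\mu|$, the others because $\mu$ is a partition), so its only non-zero cohomology is $\rH^0 = V_{((|\lambda|-|\mu|)/2,\mu_3,\mu_2-\mu_3,\mu_1-\mu_2)}$; the Leray spectral sequence for $\pi$ then degenerates, and summing over $\mu$ yields the stated formula. The main obstacle is the first step: correctly tracking the twist coming from the $\cL(-1,0,0,0)$-valued (rather than $\cO$-valued) symplectic structure on $\cR_6$, since that twist is precisely what produces the first coordinate $(|\lambda|-|\mu|)/2$ in the output highest weight. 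Once it is in place, the remaining steps are a mechanical application of relative Bott.
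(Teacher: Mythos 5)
Your proof is correct and is essentially the paper's argument: both reduce to relative symplectic Borel--Weil--Bott along $\pi\colon G/B\to G/P_1$, identify $\cL(d,0,0,0)=\pi^*(\cM^*)^{\otimes d}$ and use the projection formula, apply the $\GL_6\to\Sp_6$ branching with multiplicities $\gamma_{\lambda,\mu}$, and finish with Kempf vanishing and the Leray spectral sequence. The only difference is cosmetic: the paper establishes the pushforward of $\cL(d,\nu_3,\nu_2-\nu_3,\nu_1-\nu_2)$ first and invokes the branching decomposition of $\bS_\lambda\cR_6^*$ at the very end, whereas you decompose first and then push forward.
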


\begin{proof}
Let $\pi \colon G/B \to G/P_1$ be the projection. Then $\pi$ is the relative symplectic flag variety ${\bf IFl}(\cR_6)$. We have a tautological isotropic flag $0 \subset \cR_1 \subset \cR_2 \subset \cR_3 \subset \pi^* \cR_6$ on $G/B$. In particular, if $\nu_1 \ge \nu_2 \ge \nu_3 \ge 0$, then using the relative version of the Borel--Weil--Bott theorem for symplectic flag varieties (see \cite[Proof of Corollary 4.3.4]{weyman}), we get
\[
\rR^i\pi_*(\cR_1^{\otimes -\nu_1} \otimes (\cR_2/\cR_1)^{\otimes -\nu_2} \otimes (\cR_3/\cR_2)^{\otimes -\nu_3}) = \begin{cases} \bS_{[\nu]}(\cR_6^*) & i=0\\
0 & i>0 \end{cases}.
\]
By Proposition~\ref{prop:F4linebundles}, the line bundle on the left hand side is $\cL(0, \nu_3, \nu_2 - \nu_3, \nu_1 - \nu_2)$.

Since $\cL(d,0,0,0) = \pi^* (\cM^*)^{\otimes d}$, we can use the projection formula to get 
\[
\rR^i\pi_* \cL(d,\nu_3, \nu_2-\nu_3,\nu_1-\nu_2) = \begin{cases} \bS_{[\nu]}(\cR_6^*) \otimes (\cM^*)^{\otimes d} & i=0\\
0 & i>0 \end{cases}.
\]
In particular, the Leray spectral sequence gives
\[
\rH^i(G/P_1, \bS_{[\nu]}(\cR_6^*) \otimes (\cM^*)^{\otimes d}) = 
\begin{cases} V_{(d, \nu_3, \nu_2 - \nu_3, \nu_1 - \nu_2) }& i=0\\
0 & i>0 \end{cases}.
\]
Finally, to finish the proof, we note that $\bS_\lambda(\cR_6^*) = \bigoplus_\mu (\bS_{[\mu]}(\cR_6^*) \otimes (\cM^*)^{\otimes (|\lambda|-|\mu|)/2})^{\oplus \gamma_{\lambda,\mu}}$.
\end{proof}

\begin{theorem} \label{thm:6copiesF4}
The variety $Y$ is the $\GL(E) \times G$-orbit of $E^* \otimes W$ where $W$ is a $6$-dimensional $\#$-isotropic subspace, so the map $\hom(E, \cR_6) \to Y$ is birational. 
The variety $Y$ is normal with rational singularities.  The coordinate ring of $Y$ is
\[
K[Y] = \rH^0(G/P_1; \Sym(E \otimes \cR_6^*)) = \bigoplus_\lambda \bS_\lambda E \otimes (\bigoplus_\mu V^{\oplus \gamma_{\lambda, \mu}}_{((|\lambda| - |\mu|) / 2, \mu_3, \mu_2 - \mu_3, \mu_1 - \mu_2)}).
\]
\end{theorem}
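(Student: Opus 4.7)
The plan is to follow the template of the classical-group and $\rG_2$ cases (\S\ref{sec:typeCn} and the previous section). The variety $Y$ is by definition the image of the proper map $q\colon Z:=\hom(E,\cR_6)\to X$, so $Y$ is closed and irreducible. For any $\phi\in\hom(E,V)$ of rank $6$ whose image lies in some $\#$-isotropic $6$-plane $W$, dimension counting forces $W=\im\phi$, so $W$ is uniquely determined by $\phi$, the fiber of $q$ over such $\phi$ is a single point, and $q$ is birational. In particular $Y$ is the closure of the $\GL(E)\times G$-orbit of a generic element of $E^*\otimes W\subset\hom(E,V)$.

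For the coordinate ring and rational singularities I would apply the geometric technique (Theorem~\ref{thm:geom-tech}) with $\eta=E\otimes\cR_6^*$. By the Cauchy decomposition \eqref{eqn:cauchy-filt},
\[
\Sym(E\otimes\cR_6^*) \;\approx\; \bigoplus_\lambda \bS_\lambda E\otimes\bS_\lambda\cR_6^*,
\]
and Proposition~\ref{prop:F4cohomology} shows that each $\bS_\lambda\cR_6^*$ has vanishing higher cohomology on $G/P_1$. Hence $\rH^{>0}(G/P_1;\Sym(E\otimes\cR_6^*))=0$, all higher direct images of $\cO_Z$ vanish, and Theorem~\ref{thm:geom-tech} identifies the coordinate ring of the normalization $\tilde Y$ with the claimed direct sum. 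Once $\tilde Y=Y$ is known, this vanishing upgrades to rational singularities on $Y$.

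It remains to establish normality by the Cartan-product argument from \S\ref{sec:gradedGalg} and the $\rG_2$ section. Consider the pointwise surjection of bundles
\[
0 \to \cK \to \Sym^d(E\otimes\cR_6^*)\otimes(E\otimes\cR_6^*) \to \Sym^{d+1}(E\otimes\cR_6^*) \to 0.
\]
In characteristic $0$ the middle term splits as a sum of pieces of the form $\bS_\nu E\otimes\bS_\mu\cR_6^*$, so the kernel $\cK$ does too, and Proposition~\ref{prop:F4cohomology} applied term by term gives $\rH^1(G/P_1;\cK)=0$. Taking sections therefore preserves surjectivity, so $K[\tilde Y]$ is generated as an algebra by its degree-$1$ part, which by the $\lambda=(1)$ case of Proposition~\ref{prop:F4cohomology} equals $\rH^0(G/P_1;E\otimes\cR_6^*)=E\otimes V=A_1$. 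Hence the tautological map $A\to K[\tilde Y]$ is surjective, whence $K[Y]=K[\tilde Y]$ and $Y$ is normal.

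The substantive input is Proposition~\ref{prop:F4cohomology}; once that cohomology calculation is in place, each step above is a near-verbatim adaptation of the arguments already carried out for $\Sp(V)$ in \S\ref{sec:typeCn} and for $\rG_2$ in the previous section, and I do not anticipate any essentially new difficulty.
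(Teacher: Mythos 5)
Your proof is correct, but it takes a different route from the paper for the normality step. The paper's proof is shorter at this point: it observes that $E\otimes\cR_6^*$ is an \emph{irreducible} homogeneous bundle on $G/P_1$ (the Levi of $P_1$ is of type $\rC_3$ acting by the standard symplectic representation on the fiber, and the unipotent radical acts trivially), and cites \cite[Proposition~2.2]{landsbergweyman} to conclude directly that $K[\tilde Y]$ is generated over $A$ in low degree, whence $\tilde Y = Y$. You instead transplant the multiplication-map argument used in the $\rG_2$ section: surjectivity of $\Sym^d(E\otimes\cR_6^*)\otimes(E\otimes\cR_6^*)\to\Sym^{d+1}(E\otimes\cR_6^*)$ plus $\rH^1$-vanishing for the kernel. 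Both routes are valid and rest on the same cohomological input (Proposition~\ref{prop:F4cohomology}); yours is self-contained, whereas the paper's delegates the work to a citation. The rational-singularities and $K[Y]$ computations in the two arguments are essentially identical.

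One small point worth making explicit: the inference ``the middle term splits as a sum of $\bS_\nu E\otimes\bS_\mu\cR_6^*$, \emph{so the kernel $\cK$ does too}'' is not automatic for an arbitrary subbundle. It holds here because the multiplication map is $\GL(E)\times\GL_6$-equivariant and, by the Cauchy formula, each isotype $\bS_\tau E\otimes\bS_\tau\cR_6^*$ occurs in the target $\Sym^{d+1}(E\otimes\cR_6^*)$ with multiplicity exactly one; consequently the map is determined up to scalar on each isotypic block, and the kernel is a $\GL(E)\times\GL_6$-invariant complement, hence itself a direct sum of Schur pieces. (Equivalently, one can invoke the irreducibility of $\cR_6^*$ as a $P_1$-module so that each $\bS_\mu\cR_6^*$ is irreducible and the sequence splits --- the same fact the paper's citation depends on.) With that sentence added, your argument is a clean near-verbatim adaptation of the $\rG_2$ case.
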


\begin{proof}
Let $\tilde{Y}$ be the normalization of $Y$. By \S\ref{sec:geomtech}, we have
\[
K[\tilde{Y}] = \bigoplus_\lambda \bS_\lambda E \otimes \rH^0(G/P_1; \bS_\lambda \cR^*_6).
\]
Since the bundle $E\otimes \cR_6^*$ is irreducible, we can apply \cite[Proposition 2.2]{landsbergweyman} to conclude that $K[\tilde{Y}]$ is generated as an $A$-module in degree 1. In particular, this implies that $\tilde{Y} = Y$ and $Y$ is normal. Using Proposition~\ref{prop:F4cohomology} and \eqref{eqn:cauchy-filt}, we get the claimed calculation of $K[Y]$, and that the higher cohomology of $\Sym(E \otimes \cR_6^*)$ vanishes (so $Y$ has rational singularities).
\end{proof}

\begin{remark}
We have $\codim Y = 105$, and $\bigwedge^{20} (V/\cR_6)^* \cong \bigwedge^6 \cR_6 \cong \cL(-3,0,0,0)$. Also, 
\[
\rH^{15}(G/P_1; (\bigwedge^{20} (V/\cR_6)^*)^{\otimes 6}) \cong V_{10\omega_1},
\]
so $(\bigwedge^6 E)^{\otimes 20} \otimes V_{10\omega_1} \otimes A(-120)$ is contained in ${\bf F}_{105}$ which means that $Y$ is not Gorenstein.
\end{remark}

\begin{conjecture} 
The Tor modules $\Tor_i^A(K, K[Y])$ are Schur functors on $E$ and $V$.
\end{conjecture}

\subsection{3 copies of $V$.}

Let us consider the case $\dim E = 3$ at the cost of only focusing on representations with highest weights of the form $b\omega_2 + c\omega_3 + d\omega_4$. So let $X = \hom(E, V)$ and $Y \subset X$ the image of the projection of $\hom(E, \cR_3)$, which is a vector bundle over $G/P_2$. (Here we consider the natural map $\hom(E,\cR_3) \subset X \times G/P_2 \to X$.) If $\ell(\lambda) \le 3$, we will write $\bS_{[\lambda]} V = V_{(0,\lambda_3, \lambda_2 - \lambda_3, \lambda_1 - \lambda_2)}$.

\begin{theorem} 
The variety $Y$ is spherical and normal with rational singularities. The variety $Y$ is the $\GL(E) \times G$-orbit of $E^* \otimes W$ where $W$ is a $3$-dimensional $\#$-isotropic subspace of dimension $3$. So the projection onto $Y$ is birational. The coordinate ring of $Y$ is
\[
K[Y] = \bigoplus_\lambda \bS_\lambda E \otimes \bS_{[\lambda]} V.
\]
\end{theorem}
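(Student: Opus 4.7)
The plan is to mimic the proof of Theorem~\ref{thm:6copiesF4} almost verbatim, with $\cR_3$ on $G/P_2$ replacing $\cR_6$ on $G/P_1$. The key simplification is that $\dim E = 3 = \rank \cR_3$, so no branching to a smaller subgroup intervenes: Schur functors on $\cR_3^*$ correspond directly to dominant weights of $G$ supported on $\{\omega_2,\omega_3,\omega_4\}$, and pull back to line bundles on $G/B$ via the tautological flag $\cR_1 \subset \cR_2 \subset \cR_3$.

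The first step is the analogue of Proposition~\ref{prop:F4cohomology}: for every partition $\lambda$ with $\ell(\lambda) \le 3$,
\[
\rH^i(G/P_2;\bS_\lambda \cR_3^*) = \begin{cases} \bS_{[\lambda]} V & i=0, \\ 0 & i>0.\end{cases}
\]
To prove this, factor through $\pi \colon G/B \to G/P_2$. Using Proposition~\ref{prop:F4linebundles}, the line bundle $\cR_1^{\otimes -\lambda_1} \otimes (\cR_2/\cR_1)^{\otimes -\lambda_2} \otimes (\cR_3/\cR_2)^{\otimes -\lambda_3}$ on $G/B$ equals $\cL(0,\lambda_3,\lambda_2-\lambda_3,\lambda_1-\lambda_2)$, which is dominant. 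Relative Borel--Weil--Bott applied to the flag bundle of $\cR_3$ identifies its pushforward to $G/P_2$ with $\bS_\lambda \cR_3^*$ in degree $0$ and zero otherwise, and Kempf vanishing on $G/B$ combined with the Leray spectral sequence then yields the claim.

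With this in hand, the geometric technique of \S\ref{sec:geomtech} applied to the resolution $\hom(E,\cR_3)\to Y$, combined with \eqref{eqn:cauchy-filt}, gives
\[
K[\tilde Y] = \rH^0(G/P_2;\Sym(E\otimes \cR_3^*)) = \bigoplus_\lambda \bS_\lambda E \otimes \bS_{[\lambda]} V,
\]
where $\tilde Y$ is the normalization of $Y$. Since $E\otimes \cR_3^*$ is an irreducible homogeneous bundle on $G/P_2$, \cite[Proposition 2.2]{landsbergweyman} forces $K[\tilde Y]$ to be generated as an $A$-module in degree $1$, so $\tilde Y = Y$ and $Y$ is normal with the claimed coordinate ring. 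Vanishing of the higher cohomology of $\Sym(E\otimes\cR_3^*)$ provides rational singularities, and the multiplicity-free decomposition of $K[Y]$ exhibits sphericality under $\GL(E)\times G$.

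Finally, $Y$ is by construction the image of $\hom(E,\cR_3) \to X$, which equals $\bigcup_W (E^*\otimes W)$ as $W$ ranges over the $3$-dimensional $\#$-isotropic subspaces of $V$. Since $G$ acts transitively on such $W$, this realizes $Y$ as the $\GL(E)\times G$-orbit of $E^*\otimes W$ for any fixed $W$. Birationality of the desingularization follows because on the open locus of rank-$3$ maps $\phi$, the subspace $\cR_3 = \phi(E)$ is uniquely recovered. I expect the main technical step to be the relative cohomology calculation of the second paragraph, though given the explicit line-bundle dictionary of Proposition~\ref{prop:F4linebundles}, this is a routine Borel--Weil--Bott exercise.
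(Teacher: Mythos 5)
Your proof follows the same approach as the paper's (very terse) proof: establish the cohomology of $\bS_\lambda \cR_3^*$ via a relative Borel--Weil--Bott computation, then apply the geometric technique and \cite[Proposition 2.2]{landsbergweyman} exactly as in Theorem~\ref{thm:6copiesF4} to obtain normality, rational singularities, and the decomposition of $K[Y]$.

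There is, however, one small but real omission in the cohomology step. You treat $\pi\colon G/B \to G/P_2$ as if it were the flag bundle $\Fl(\cR_3)$ of the rank-$3$ bundle $\cR_3$, writing that ``relative Borel--Weil--Bott applied to the flag bundle of $\cR_3$ identifies its pushforward to $G/P_2$ with $\bS_\lambda\cR_3^*$.'' But $\pi$ is not the flag bundle of $\cR_3$: as the paper notes, $\pi$ has fibers $\Fl(\cR_3)\times\bP^1$, the extra $\bP^1$ coming from node $1$ of the $\rF_4$ diagram, which is not adjacent to node $2$ and so survives in the Levi of $P_2$. Concretely, $G/B$ is a $\bP^1$-bundle over the relative flag variety $\Fl(\cR_3)$. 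The step you need (and omit) is that the line bundle $\cL(0,\lambda_3,\lambda_2-\lambda_3,\lambda_1-\lambda_2)$ has zero in the coordinate corresponding to node $1$, hence restricts trivially on the fibers of $G/B \to \Fl(\cR_3)$; its derived pushforward along that map is therefore concentrated in degree zero and equals the corresponding line bundle on $\Fl(\cR_3)$. Only then does relative Borel--Weil--Bott on the actual flag bundle $\Fl(\cR_3)\to G/P_2$ give $\rR^\bullet\pi_*\cL = \bS_\lambda\cR_3^*$. Without this intermediate observation, your invocation of relative Borel--Weil--Bott computes a pushforward along the wrong map. The fix is a single sentence, and it is precisely the reason the paper records the fiber as $\Fl(\cR_3)\times\bP^1$ rather than $\Fl(\cR_3)$. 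The remaining parts of your argument (Landsberg--Weyman for normality, vanishing of higher cohomology of $\Sym(E\otimes\cR_3^*)$ for rational singularities, transitivity of $G$ on $3$-dimensional $\#$-isotropic subspaces and recovery of $\phi(E)$ on the rank-$3$ locus for the orbit and birationality claims) are correct and match the paper's line of reasoning.
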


\begin{proof}
Consider the map $\pi \colon G/B \to G/P_2$ which has fibers $\Fl(\cR_3) \times \bP^1$. A similar argument as in the proof of Proposition~\ref{prop:F4cohomology} shows that $\pi_* \cL(0, \lambda_3, \lambda_2 - \lambda_3, \lambda_1 - \lambda_2) = \bS_\lambda \cR_3^*$ and its higher direct images vanish. Hence
\[
\bigoplus_\lambda \bS_\lambda E \otimes \rH^0(G/P_2; \bS_\lambda \cR_3^*) = \bigoplus_\lambda \bS_\lambda E \otimes V_{(0, \lambda_3,\lambda_2 - \lambda_3, \lambda_1 - \lambda_2)}.
\]
Checking that $Y$ is normal with rational singularities follows as in the proof of Theorem~\ref{thm:6copiesF4}. 
\end{proof}

\begin{theorem}\label{thm:3copf4quad}
The defining ideal $Y$ is generated by quadrics.  The defining equations are given by the representations $\bigwedge^2 E\otimes V_{\omega_1}$ and by $\Sym^2 E\otimes (V_{\omega_4}\oplus K)$.
\end{theorem}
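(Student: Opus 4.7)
The plan is to pin down the degree-$2$ part of $I(Y)$ explicitly, and then invoke Theorem~2.17 to show those quadrics generate the whole ideal.

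First, I would decompose $\Sym^2(E \otimes V) = \Sym^2 E \otimes \Sym^2 V \oplus \bigwedge^2 E \otimes \bigwedge^2 V$ by the Cauchy identity, and combine this with the $F_4$-decompositions of the tensor square of $V = V_{\omega_4}$: namely $\Sym^2 V = V_{2\omega_4} \oplus V_{\omega_4} \oplus K$, where the $V_{\omega_4}$ summand is furnished by the $\#$-product $\Sym^2 V \to V$ and $K$ by the invariant quadratic form, and $\bigwedge^2 V = V_{\omega_3} \oplus V_{\omega_1}$, with $V_{\omega_1}$ the $52$-dimensional adjoint representation (the dimensions $351 = 324 + 26 + 1$ and $325 = 273 + 52$ check out). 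Comparing with $K[Y]_2 = \Sym^2 E \otimes V_{2\omega_4} \oplus \bigwedge^2 E \otimes V_{\omega_3}$ from the previous theorem, the degree-$2$ part of $I(Y)$ is precisely $\Sym^2 E \otimes (V_{\omega_4} \oplus K) \oplus \bigwedge^2 E \otimes V_{\omega_1}$, matching the claimed generators.

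Next, to show these quadrics generate all of $I(Y)$, I would apply Theorem~2.17. The hypotheses are already in place: $K[Y]$ is an integral domain (since $Y$ is irreducible), and $K[Y]$ is multiplicity-free as a $\GL(E) \times F_4$-module because the correspondence $\lambda \mapsto (0,\lambda_3,\lambda_2-\lambda_3,\lambda_1-\lambda_2)$ is injective on partitions with $\ell(\lambda) \le 3$. The Grosshans associated graded $\gr(K[Y])$ is then the Cartan product algebra, whose three atom generators are $V_{\omega_1^E + \omega_4} = E \otimes V$, $V_{\omega_2^E + \omega_3} = \bigwedge^2 E \otimes V_{\omega_3}$, and $V_{\omega_3^E + \omega_2} = \bigwedge^3 E \otimes V_{\omega_2}$. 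By the general result of \S\ref{sec:gradedGalg}, the defining ideal of $\gr(K[Y])$ in the associated multi-graded polynomial ring is generated in total degree $2$, and Theorem~2.17 transfers this conclusion to $K[Y]$ itself.

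The main obstacle is the translation from the multi-graded polynomial ring on the three atoms to $\Sym(E \otimes V)$. The two higher atoms $\bigwedge^2 E \otimes V_{\omega_3}$ and $\bigwedge^3 E \otimes V_{\omega_2}$ embed as Cauchy summands of $\Sym^2(E \otimes V)$ and $\Sym^3(E \otimes V)$, so mixed multi-graded relations (of bidegrees like $(1,1,0)$ or $(1,0,1)$) a priori correspond to relations of total degree $3$ or $4$ in $\Sym(E \otimes V)$, and one must verify that these are consequences of the honest quadratic relations identified in the first step. If this bookkeeping is too delicate, a concrete alternative is to compute the first term $\bF_1 = \bigoplus_{j \ge 0} \rH^j(\bigwedge^{1+j}\xi) \otimes A(-1-j)$ of the geometric-technique resolution with $\xi = E \otimes (V/\cR_3)^*$ on $F_4/P_2$, using Bott's algorithm to check that $\rH^0(\xi) = 0$, that $\rH^1(\bigwedge^2 \xi)$ equals the quadratic module of Step 1, and that $\rH^j(\bigwedge^{1+j}\xi) = 0$ for $j \ge 2$.
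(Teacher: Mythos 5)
Your first paragraph is a correct and useful computation: indeed $\bigwedge^2 V_{\omega_4} = V_{\omega_3}\oplus V_{\omega_1}$, $\Sym^2 V_{\omega_4} = V_{2\omega_4}\oplus V_{\omega_4}\oplus K$, and comparison with $K[Y]_2 = \Sym^2 E\otimes V_{2\omega_4}\oplus \bigwedge^2 E\otimes V_{\omega_3}$ pins down the degree-two part of $I(Y)$ as stated. This part of the argument is sound and implicit in the paper.

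The gap is in the second and third paragraphs. The Grosshans/Inamdar theorem (your ``Theorem~2.17'') is stated for the generation degree of $\gr(A)$ as an \emph{algebra}: if $\gr(A)$ is generated in degrees $\le d$, then $I(A) \subset \Sym(E\otimes V)$ is generated in degrees $\le 2d$. Your three atom generators $E\otimes V$, $\bigwedge^2 E\otimes V_{\omega_3}$, $\bigwedge^3 E\otimes V_{\omega_2}$ sit in degrees $1,2,3$ of $\Sym(E\otimes V)$, so $d=3$ and the theorem only bounds the generation degree of $I(Y)$ by $6$, not $2$. Your enumeration of the ``mixed bidegrees'' is also incomplete: relations of bidegree $(0,1,1)$ and $(0,0,2)$ among the three atoms correspond to total degrees $5$ and $6$, which are precisely the degrees the paper must treat by hand. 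You correctly identify this translation as ``the main obstacle,'' but you do not resolve it, and the ``concrete alternative'' you sketch (compute $\bF_1$ via Bott on $\xi = E\otimes(V/\cR_3)^*$) is exactly where the real work lies: one must show $\rH^j(\bigwedge^{1+j}\xi)=0$ for $j\ge 2$, which is nontrivial because $\xi$ is not a completely reducible homogeneous bundle. The paper handles this by passing to the semisimplification $\xi'$ (as a $P_2$-module), computing $\rH^i(\bigwedge^{i+1}\xi')$ by computer, obtaining explicit candidate lists $L(5)$ and $L(6)$, and then ruling each candidate out with a representation-theoretic argument (none appears in $(E\otimes V)\otimes K[Y]_{i-1}$, so they cancel in the spectral sequence relating $\xi$ to $\xi'$). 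Without that step your argument establishes only the degree-$6$ bound.
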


\begin{proof}
Using the notation from \S\ref{sec:geomtech}, we define $\xi$ by the short exact sequence
\[
0 \to \xi \to E \otimes V \to E \otimes \cR_3^* \to 0.
\] 
By Theorem~\ref{thm:geom-tech}, we need to prove that $\rH^i (G/P_2;\bigwedge^{i+1}\xi )=0$ for $i>1$.  The deformation technique of Grosshans (see \S\ref{sec:grosshans}) shows that the only representations that might appear are of the form $\bS_{\mu}E\otimes V_\nu$ where $\mu_1\le 2$: we degenerate to a ring where multiplication is given by Cartan product and hence generated by $\bS_\mu E \otimes V_\nu$ where $\mu_1 \le 1$; also we know from \S\ref{sec:gradedGalg} that the ideal of the degenerate ring is generated by quadrics in the new generators. This last statement implies that the equations in $Y$ must occur in degrees $\le 6$.

We think of homogeneous bundles on $G/P_2$ as rational $P_2$-modules.  Consider the composition series of $\xi$ treated as a $P_2$-module and let $\xi'$ be the corresponding associated graded semi-simple $P_2$-module. Then the cohomology of the exterior powers of $\xi'$ can be calculated by a computer, and 
was performed for us by Witold Kra\'skiewicz. We get lists $L(i)$ of representations that involve Schur functors $\bS_\mu E\otimes V_\nu$ with $\mu_1 \le 2$ that appear in $\rH^i(G/P_2;\bigwedge^{i+1}\xi')$ and $\Sym^i(E \otimes V)$ for $i=5,6$:
\begin{align*}
L(5): & (2,2,1;0,0,0,0), (2,2,1;0,0,0,1), (2,2,1;1,0,0,0), (2,2,1;0,0,1,0)\\
L(6): & (2,2,2;0,0,0,0), (2,2,2;0,0,0,1), (2,2,2;1,0,0,0),\\
&(2,2,2;0,0,1,0), (2,2,2;0,0,0,2), (2,2,2;2,0,0,0).  
\end{align*}
In order to eliminate these representations (i.e., showing that they cancel out in the spectral sequence that calculates the cohomology of $\xi$ from that of $\xi'$) we observe the following (the calculations here are done with the program LiE):

None of the representations on the list $L(i)$ appears in the tensor product
\[
(E\otimes V)\otimes \bigoplus_{|\lambda|=i-1} \bS_\lambda E \otimes V_{[\lambda]}
\]
(and hence cannot be minimal ideal generators). So the proposed ideal generators in $L(i)$ cancel in the spectral sequence and do not appear.
\end{proof}

\begin{remark}
There is a subtlety here which is worth pointing out. The calculation in the proof above as stated is too big to be done efficiently on a computer. However, using the duality on the category of $\GL(E)\times G$ representations taking $\bS_\mu E\otimes V_\nu$ to $\bS_{\mu^\dagger} E \otimes V_\nu$ (which exchanges the exterior and symmetric powers of $\xi'$) and using the fact that we are interested in Schur functors with $\mu_1\le 2$, we see that after applying the duality we can make the further simplification of assuming that $\dim E=2$.
\end{remark}

\subsection{1 copy of $V$.} \label{sec:F41copy}
Let $Y \subset V$ be the affine cone over the highest weight orbit. Then $Y$ has dimension $16$ and codimension $10$. Note that $Y$ is a hyperplane section of the variety considered in
\S\ref{sec:1copyE6}, so the Betti table is the same as in Proposition~\ref{prop:E6betti}. In particular, the Hilbert series is
\[
\frac{1+10 T+28 T^{2}+28 T^{3}+10 T^{4}+T^{5}}{(1-T)^{16}}.
\]
Using LiE, and the Betti table in Proposition~\ref{prop:E6betti}, we can get the representation
structure of the resolution:
\begin{align*}
  \begin{array}{ll}
  \bF_0 = A &
  \bF_1 = (K \oplus V_{\omega_4})(-2)\\
  \bF_2 = (V_{\omega_1} \oplus V_{\omega_4})(-3) &
  \bF_3 = (V_{\omega_4} \oplus V_{\omega_3} \oplus V_{\omega_1})(-5)\\
  \bF_4 = (K \oplus V_{\omega_4}^{\oplus 2} \oplus V_{2\omega_4})(-6) &
  \bF_5 = (K \oplus V_{\omega_4} \oplus V_{2\omega_4})(-7) \oplus (K
  \oplus V_{\omega_4} \oplus V_{2\omega_4})(-8)\\
  \bF_6 = (K \oplus V_{\omega_4}^{\oplus 2} \oplus V_{2\omega_4})(-9) &
  \bF_7 = (V_{\omega_4} \oplus V_{\omega_3} \oplus V_{\omega_1})(-10)\\
  \bF_8 = (V_{\omega_1} \oplus V_{\omega_4})(-12) &
  \bF_9 = (K \oplus V_{\omega_4})(-13)\\
  \bF_{10} = A(-15)
\end{array}
\end{align*}


\section{Type ${\rm E}_6$.}

We assume that $K$ has characteristic $0$ in this section.

\subsection{Description of homogeneous spaces}

We label the diagram as
\[
\xymatrix @-1.2pc { & & 2 & & \\ 1 \ar@{-}[r] & 3 \ar@{-}[r] & 4
  \ar@{-}[r] \ar@{-}[u] & 5 \ar@{-}[r] & 6 }
\]
Again we start with the information on the homogeneous spaces for the group $G$ of type ${\rm E}_6$. We will follow \cite[\S 7]{carr}. Let $V$ be a $27$-dimensional irreducible representation, we can either take this to be $V_{\omega_1}$ or $V_{\omega_6}$, they are duals to each other. We will take $V = V_{\omega_1}$ for consistency with \cite{carr}. As usual, we will take $A = \Sym(E \otimes V)$ throughout, where $E$ will be a vector space whose dimension is specified in each subsection.

There is a $G$-equivariant map $\# \colon \Sym^2 V \to V^*$. Call a
subspace {\bf $\#$-isotropic} if $\#$ restricts to 0 on it. A nonzero
vector $x \in V$ is {\bf singular} if $x \# x = 0$. The subspaces of
the form $x \# V$ where $x$ is singular are 10-dimensional and are
called {\bf hyperlines}. The Levi subgroup of the
parabolic subgroup stabilizing a hyperline is of type ${\rm D}_5$, so
$x \# V$ is equipped with a nondegenerate symmetric bilinear form
$\beta$.

The ${\rm D}_5$ flag variety consists of $\beta$-isotropic flags $W_1
\subset W_2 \subset W_3 \subset W_5$ and $W'_5 \supset W_3$
($\beta$-isotropic implies $\#$-isotropic, see
\cite[(7.17)]{carr}). Exactly one of $W_5$ and $W'_5$ is contained in
a 6-dimensional $\#$-isotropic subspace $W_6$, we will label them so
that it is $W'_5$. So points of $G/B$ are given by a hyperline
$W_{10}$ together with a $\beta$-isotropic flag contained in it. For
the partial flag varieties we just forget some subspaces, but we need
to know which nodes correspond to which subspaces. In order of nodes,
they are: $W_1$, $W_6$, $W_2$, $W_3$, $W_5$, $W_{10}$. In particular,
$G/P_i$ parametrizes certain $d_i$-dimensional subspaces, and we
include some dimension data:
\[
\begin{array}{c|c|c}
  i & d_i & \dim G/P_i \\
  \hline
  1 &1 & 16\\ 
  2 & 6 & 21\\
  3 & 2 & 25\\
  4 & 3 & 29\\
  5 & 5 & 25\\
  6 & 10 & 16
\end{array}
\]

We set $W_4 = W_5 \cap W'_5$. Let $\cL(a_1, \dots, a_6)$ denote the line bundle whose sections form the dual of the irreducible with highest $a_1\omega_1 + \cdots + a_6\omega_6$. Each $W_i$ gives rise to a tautological subbundle $\cR_i$ of the trivial bundle $V \times G/B$. We can also define $\cR'_5$. Also, $\cR_{10}$ has the structure of an orthogonal bundle, i.e., we have a map $\Sym^2 \cR_{10} \to \cM$ for some line bundle $\cM$. Observe that $\bigwedge^{10} \cR_{10} \cong \cM^{\otimes 5}$, and the only submodule of $\bigwedge^{10} V$ whose highest weight is a multiple of $5$ is $V_{5\omega_6}$. So we get $\cM = \cL(0,0,0,0,0,-1)$. We can also define $\cR_{10-i} = \cR_i^\perp$ by taking $\beta$-orthogonal complements.

Note that $\bigwedge^2 V = V_{\omega_3}$, $\bigwedge^3 V = V_{\omega_4}$, $\bigwedge^4 V = V_{\omega_2 + \omega_5}$, and $\bigwedge^5 V = V_{2\omega_5} \oplus V_{2\omega_2 + \omega_6}$. We see that the duals of the determinants of $\cR_1, \cR_2, \cR_3, \cR_4$ give rise to embeddings of $G/B$ into $\bP(V_{\omega_3})$, $\bP(V_{\omega_4})$, and $\bP(V_{\omega_2 + \omega_5})$. Also, $(\det \cR_5)^*$ gives the embedding into $\bP(V_{2\omega_5})$.  

\begin{proposition}
Under the above identifications we have
\begin{align*}
  \cR_1 = \cL(-1,0,0,0,0,0), &\quad& \cR_2/\cR_1 =
  \cL(1,0,-1,0,0,0),\\
  \cR_3/\cR_2 = \cL(0,0,1,-1,0,0), &\quad& \cR_4/\cR_3 =
  \cL(0,-1,0,1,-1,0), \\
  \cR_5/\cR_4 = \cL(0,1,0,0,-1,0),&\quad& 
  \cR_6/\cR_5 = \cL(0,-1,0,0,1,-1),\\
  \cR_7/\cR_6 = \cL(0,1,0,-1,1,-1), &\quad& \cR_8/\cR_7 =
  \cL(0,0,-1,1,0,-1),\\
  \cR_9/\cR_8 = \cL(-1,0,1,0,0,-1), &\quad& \cR_{10}/\cR_9 =
  \cL(1,0,0,0,0,-1).
\end{align*}
\end{proposition}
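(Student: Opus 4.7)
The plan is to follow the same template as Propositions~\ref{prop:G2linebundles} and~\ref{prop:F4linebundles}: identify each $\det \cR_i^*$ from the embedding of an appropriate partial flag variety into a projective space of a Schur module, then combine via the short exact sequences $0 \to \cR_{i-1} \to \cR_i \to \cR_i/\cR_{i-1} \to 0$ (giving $\cR_i/\cR_{i-1} = \det \cR_i \otimes \det \cR_{i-1}^{*}$) together with the $\beta$-duality $\cR_{10-i} = \cR_i^\perp$ for the second half of the flag.

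First I would handle $\cR_i$ for $i \in \{1,2,3,5\}$. Each such $\cR_i$ descends to the tautological rank-$i$ bundle on the maximal partial flag variety $G/P_{\sigma(i)}$, where $\sigma(1)=1$, $\sigma(2)=3$, $\sigma(3)=4$, $\sigma(5)=5$, and the Picard group is cyclic generated by $\cL(\omega_{\sigma(i)})$. For $i=1,2,3$ we have $\bigwedge^i V \cong V_{\omega_1}, V_{\omega_3}, V_{\omega_4}$ respectively (all irreducible), so the equivariant embedding $G/P_{\sigma(i)} \hookrightarrow \bP(\bigwedge^i V)$ forces $\det \cR_i^* = \cL(\omega_{\sigma(i)})$. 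For $i=5$ the paragraph preceding the proposition identifies the image in $\bP(V_{2\omega_5}) \subset \bP(\bigwedge^5 V)$, so $\det \cR_5^* = \cL(2\omega_5)$. Combined with $\det \cR_{10} = \cM^{\otimes 5} = \cL(-5\omega_6)$ (already established just before the proposition), this pins down $\cR_1$, $\cR_2/\cR_1$, $\cR_3/\cR_2$ and $\cR_{10}/\cR_9$ via the short exact sequences.

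The one nontrivial step is computing $\det \cR_4^*$, since $\cR_4 = \cR_5 \cap \cR_5'$ does not descend to a partial flag variety of Picard rank one. Here I would use the equivariant inclusion $\det \cR_4 \hookrightarrow \bigwedge^4 V = V_{\omega_2+\omega_5}$ (irreducible) and observe that its fiber at the identity coset is a $B$-stable line, hence forced to be the highest-weight line. Concretely, the fiber is $\det W_4$, and its $T$-weight is the sum of the top four weights of $V_{\omega_1}$; the weight poset of $V_{\omega_1}$ is a chain at the top (with successive simple-root decrements by $\alpha_1$, $\alpha_3$, $\alpha_4$), so these four weights are unambiguous and their sum equals $4\omega_1 - 3\alpha_1 - 2\alpha_3 - \alpha_4$. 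A short calculation with the Cartan matrix simplifies this to $\omega_2 + \omega_5$, giving $\det \cR_4^* = \cL(\omega_2+\omega_5)$, from which $\cR_4/\cR_3$ and $\cR_5/\cR_4$ drop out of the short exact sequences.

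Finally, for $j \in \{6,\ldots,10\}$ the bundle $\cR_j/\cR_{j-1}$ comes from the perfect pairing $(\cR_{11-j}/\cR_{10-j}) \otimes (\cR_j/\cR_{j-1}) \to \cM$ induced by $\beta$, which yields $\cR_j/\cR_{j-1} = \cM \otimes (\cR_{11-j}/\cR_{10-j})^*$; combined with $\cM = \cL(-\omega_6)$ this recovers the five remaining formulas. The main obstacle is the $\cR_4$ identification: the rest is a routine adaptation of the pattern established for $\rG_2$ and $\rF_4$, but $\cR_4$ requires either a weight computation as above or some ad-hoc argument to overcome the absence of a natural Picard-rank-one flag variety.
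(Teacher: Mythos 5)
Your proposal is correct and follows the same route the paper takes: the paper's proof is the one-line ``similar to Proposition~\ref{prop:G2linebundles},'' and that model proof identifies the $\det \cR_i^*$ from the projective embeddings $G/P \hookrightarrow \bP(V_\lambda)$ and then fills in the second half of the flag using the $\beta$-duality $\cR_j/\cR_{j-1} \cong \cM \otimes (\cR_{11-j}/\cR_{10-j})^*$. You have correctly singled out the one genuinely nonroutine step, namely $\cR_4$: since $W_4 = W_5 \cap W'_5$ does not correspond to a node of the Dynkin diagram, one cannot simply quote ``Picard rank one'' for a partial flag variety, and the paper instead reads off $\det\cR_4^* = \cL(\omega_2+\omega_5)$ from the displayed fact $\bigwedge^4 V = V_{\omega_2+\omega_5}$. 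Your $T$-weight computation ($4\omega_1 - 3\alpha_1 - 2\alpha_3 - \alpha_4 = \omega_2 + \omega_5$) is a welcome independent check of exactly this identification; for what it's worth, the same style of weight computation also fixes the ambiguity for $\det\cR_5^*$ (distinguishing $V_{2\omega_5}$ from $V_{2\omega_2+\omega_6}$ inside $\bigwedge^5 V$), which the paper resolves only implicitly by its convention on which of $W_5$, $W'_5$ sits inside $W_6$.
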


\begin{proof}
Similar to proof of Proposition~\ref{prop:G2linebundles}.
\end{proof}

\subsection{10 copies of $V$.}

Now let $E$ be a $10$-dimensional vector space and $X = \hom(E,V)$. Consider the vector bundle $\hom(E, \cR_{10})$ over $G/P_6$. Let $Y \subset X$ be the image of the projection of $\hom(E,\cR_{10})$ onto $X$. 

\begin{theorem} 
The variety $Y$ is the $\GL(E) \times G$-orbit of $E^* \otimes W$ where $W$ is a hyperline, so this map is birational. The variety $Y$ is normal with rational singularities and its coordinate ring is
\[
\rH^0(G/P_6; \Sym(E \otimes \cR_{10}^*)) = \bigoplus_\lambda \bS_\lambda E \otimes \rH^0(G/P_6; \bS_\lambda \cR_{10}^*).
\]
\end{theorem}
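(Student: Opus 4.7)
The plan is to follow the strategy of Theorem~\ref{thm:6copiesF4}, adapting it to the parabolic $P_6$ (whose Levi is of type $\rD_5$) and the rank-$10$ orthogonal bundle $\cR_{10}$. First, I would verify birationality of the map $q\colon Z \to Y$ coming from the desingularization, where $Z$ is the total space of $\cS = E^* \otimes \cR_{10}$ over $G/P_6$. The fiber over $\phi \in Y$ is the set of hyperlines $W$ with $\phi(E) \subseteq W$; since $\dim E = 10 = \dim W$, a generic such $\phi$ has rank $10$ and then $\phi(E) = W$ uniquely recovers $W$, so $q$ is birational. Transitivity of $G$ on hyperlines combined with $\GL(E)$-transitivity on isomorphisms $E \xrightarrow{\sim} W$ identifies $Y$ with the closure of the claimed orbit $\GL(E) \cdot G \cdot (E^* \otimes W)$.

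Second, I would apply Theorem~\ref{thm:geom-tech} with $\cV = \cO$, so that $\eta = \cS^* = E \otimes \cR_{10}^*$ and the normalization $\tilde Y$ satisfies
\[
K[\tilde Y] \;=\; \rH^0(G/P_6;\Sym(E \otimes \cR_{10}^*)) \;\approx\; \bigoplus_\lambda \bS_\lambda E \otimes \rH^0(G/P_6; \bS_\lambda \cR_{10}^*),
\]
where the second identification uses the good filtration \eqref{eqn:cauchy-filt}. Because $\eta = E \otimes \cR_{10}^*$ is an \emph{irreducible} homogeneous bundle on $G/P_6$ (the tensor of two irreducible $P_6$-representations), \cite[Proposition 2.2]{landsbergweyman} applies exactly as in the proof of Theorem~\ref{thm:6copiesF4} to give that $K[\tilde Y]$ is generated as an $A$-module in degree $1$. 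This forces $K[\tilde Y]$ to be a quotient of $A$, so $\tilde Y \to Y$ is a closed immersion of the same dimension, whence $\tilde Y = Y$ and $Y$ is normal.

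Third, to conclude rational singularities it suffices, by Theorem~\ref{thm:geom-tech}, to show that $\rR^i q_* \cO_Z = 0$ for $i > 0$, equivalently that $\rH^i(G/P_6; \bS_\lambda \cR_{10}^*) = 0$ for every partition $\lambda$ and every $i > 0$. I would prove this in the spirit of Proposition~\ref{prop:F4cohomology}, by factoring through the full flag variety via $\pi\colon G/B \to G/P_6$. The fiber of $\pi$ is the orthogonal flag variety for the $\rD_5$ Levi acting on $\cR_{10}$, so the relative Borel--Weil--Bott theorem expresses $\bS_\lambda \cR_{10}^*$ as a direct summand of pushforwards of dominant line bundles indexed by the branching from $\GL_{10}$ to $\SO_{10}$; Kempf vanishing on $G/B$ (see \S\ref{sec:schurmodules}) then delivers the required acyclicity and simultaneously identifies $\rH^0(G/P_6; \bS_\lambda \cR_{10}^*)$ with the stated decomposition.

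The main obstacle is the final acyclicity step: one must compute the $\rE_6$-analogue of Proposition~\ref{prop:F4linebundles} for the quotients $\cR_i/\cR_{i-1}$ and verify that each line bundle produced by the $\GL_{10} \to \SO_{10}$ branching is dominant on $G/B$, so that Kempf vanishing (rather than intricate Bott cancellation) applies. Everything else in the plan is formal consequence of the irreducibility of $\eta$ and the general geometric technique.
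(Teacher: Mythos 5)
Your proof is correct and takes the same approach as the paper: the paper's own proof is literally the single sentence ``The proof is analogous to the proof of Theorem~\ref{thm:6copiesF4},'' and you have reconstructed exactly that analogy (geometric technique, normality via irreducibility of $E\otimes\cR_{10}^*$ and \cite[Proposition 2.2]{landsbergweyman}, and acyclicity via relative Borel--Weil--Bott through $G/B$). One small caution for the final step: since the Levi of $P_6$ is of type $\rD_5$, the branching is from $\GL_{10}$ to $\bO_{10}$ rather than $\SO_{10}$, and for $\ell(\mu)=5$ the module $\bS_{[\mu]}$ splits into a $\pm$ pair of $\SO_{10}$-irreducibles; this does not affect the required higher-cohomology vanishing, but it is the one place where the adaptation from the symplectic ($\rC_3$) Levi of $\rF_4$ is not purely mechanical.
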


\begin{proof}
The proof is analogous to the proof of Theorem~\ref{thm:6copiesF4}. 
\end{proof}

\begin{remark}
The codimension of $Y$ is $154$. The last term of the resolution of $Y$ contains $(\bigwedge^{10} E)^{\otimes 17} \otimes V^*_{38\omega_1} \otimes A(-170)$, so $Y$ is not Gorenstein.
\end{remark}

\begin{conjecture} 
The Tor modules $\Tor_i^A(K, K[Y])$ are Schur functors on $E$ and $V$.
\end{conjecture}

\subsection{5 copies of $V$.}

To get a spherical variety analogous to the determinantal variety, we now take $E$ to be $5$-dimensional. Let $Y \subset \hom(E, V)$ be the image of $\hom(E, \cR_5)$ over $G/P_5$. The codimension of $Y$ is $85$. For a partition $\lambda = (\lambda_1, \dots, \lambda_5)$, define
\[
[\lambda] = (\lambda_1 - \lambda_2, \lambda_4 - \lambda_5, \lambda_2 - \lambda_3, \lambda_3 - \lambda_4, \lambda_4 + \lambda_5, 0)
\]

\begin{theorem} The variety $Y$ is spherical and normal with rational singularities. Its coordinate ring decomposes as follows
\[
K[Y]=\bigoplus_\lambda \bS_\lambda E \otimes \rH^0(G/P_5; \bS_\lambda \cR_5^*) = \bigoplus_\lambda \bS_\lambda E \otimes V^*_{[\lambda]}.
\]
\end{theorem}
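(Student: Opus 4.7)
The plan is to apply the same geometric template that proved the analogous results for $\rF_4$ (Theorem~\ref{thm:6copiesF4} and Theorem~\ref{thm:3copf4quad}). The map $Z = \hom(E,\cR_5) \to Y$ is a desingularization and is birational, since on the open orbit a rank-$5$ map with $\#$-isotropic image determines its image uniquely. In the notation of \S\ref{sec:geomtech}, with $\eta = E \otimes \cR_5^*$, Theorem~\ref{thm:geom-tech} identifies $K[\tilde{Y}]$ with $\rH^0(G/P_5; \Sym(E \otimes \cR_5^*))$. Using the Cauchy filtration \eqref{eqn:cauchy-filt}, this reduces the whole problem to computing $\rH^i(G/P_5; \bS_\lambda \cR_5^*)$ for partitions $\lambda$ with $\ell(\lambda) \le 5$.

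The computation of this cohomology is the main step. I would use the projection $\pi \colon G/B \to G/P_5$ together with the line bundle identifications from the preceding proposition. A direct bookkeeping using those identifications gives
\[
\cR_1^{\otimes -\lambda_1} \otimes (\cR_2/\cR_1)^{\otimes -\lambda_2} \otimes (\cR_3/\cR_2)^{\otimes -\lambda_3} \otimes (\cR_4/\cR_3)^{\otimes -\lambda_4} \otimes (\cR_5/\cR_4)^{\otimes -\lambda_5} \cong \cL([\lambda])
\]
on $G/B$, where $[\lambda]$ is precisely the weight appearing in the theorem statement. All entries of $[\lambda]$ are non-negative since $\lambda$ is a partition, so $\cL([\lambda])$ is dominant. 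The projection $\pi$ factors through the relative flag variety $\Fl(\cR_5) \to G/P_5$ and an additional $\bP^1$ fiber coming from the $\rA_1$ factor of the Levi of $P_5$, on which $\cL([\lambda])$ is a pullback since its node-$6$ coordinate is zero. Applying relative Borel-Weil-Bott along $\Fl(\cR_5) \to G/P_5$ in the manner of Proposition~\ref{prop:F4cohomology} and Proposition~\ref{prop:G2cohomology} then yields $\pi_* \cL([\lambda]) = \bS_\lambda \cR_5^*$ with all higher direct images vanishing. Combining this with Kempf vanishing for $\cL([\lambda])$ and the Leray spectral sequence gives
\[
\rH^i(G/P_5; \bS_\lambda \cR_5^*) = \begin{cases} V^*_{[\lambda]} & i = 0\\ 0 & i > 0. \end{cases}
\]

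Once this cohomology calculation is in place, the remaining conclusions follow along the $\rF_4$ template. Normality of $Y$ follows by applying \cite[Proposition 2.2]{landsbergweyman} to the irreducible $\GL(E) \times G$-homogeneous bundle $E \otimes \cR_5^*$, which forces $K[\tilde{Y}]$ to be generated as an $A$-module in degree $1$ and hence $\tilde{Y} = Y$. Rational singularities follow from the vanishing of the higher cohomology of $\Sym(E \otimes \cR_5^*)$ via Theorem~\ref{thm:geom-tech}. Sphericity is immediate from the multiplicity-freeness of the resulting decomposition of $K[Y]$.

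I expect the main obstacle to be a clean verification of the line bundle identification, especially the careful matching between the $\rA_4 \times \rA_1$ Levi structure of $P_5$ and the Bourbaki-labeled weights appearing in the preceding proposition. Once $[\lambda]$ is correctly identified and seen to be dominant, the relative Borel-Weil-Bott argument and the rest of the proof template proceed exactly as in the $\rF_4$ cases already treated.
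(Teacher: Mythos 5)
Your proof is correct and follows exactly the template the paper invokes: the paper's own proof reads simply ``analogous to Theorem~\ref{thm:6copiesF4},'' and you have correctly fleshed out that analogy, including the key cohomology computation $\rH^i(G/P_5; \bS_\lambda \cR_5^*)$ via relative Borel--Weil--Bott and the verification that the line bundle identifications in the preceding proposition tensor to $\cL([\lambda])$. Both the Leray/Kempf vanishing bookkeeping along $\Fl(\cR_5)\times\bP^1$ and the normality argument via \cite[Proposition 2.2]{landsbergweyman} match the $\rF_4$ proof the paper is referencing.
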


\begin{proof}
The proof is analogous to the proof of Theorem~\ref{thm:6copiesF4}.
\end{proof}

\begin{remark}
Here the calculation of the exterior powers of $V$ reveals that the defining ideal of $Y$ cannot be generated by quadrics. Indeed $\bigwedge^2 V$ is irreducible, so the only equations in degree $2$ are given by the representation $\bS_2 E\otimes V^*_{\omega_1}$, i.e., they are the polarizations of the equations one gets for one copy of $V$. However these cannot generate the equations in degree $5$ with the $\GL(E)$ component $\bigwedge^5 E$. However, instead of using $\cR_5$, we could have defined our variety using the bundle $\cR'_5$. This is similar to the situation that occurs in \S\ref{sec:typeDn}. So it is plausible that the defining ideal of the union of the two varieties we get in this way is generated by quadrics. 
\end{remark}

\begin{conjecture} 
Let $\dim E'=4$ with $E'$ a quotient of $E$. The defining ideal of $Y \cap \hom(E',V)$ is generated by quadrics. The defining equations are given by $\bS_2 E' \otimes V^*_{\omega_1}$.
\end{conjecture}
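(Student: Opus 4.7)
The plan is to adapt the argument used in the proof of Theorem~\ref{thm:3copf4quad} to the present situation. First I would verify that $Y' := Y \cap \hom(E', V)$ coincides with the image of the vector bundle $\hom(E', \cR_5)$ over $G/P_5$ under the natural projection: a map $\phi \in \hom(E', V) \subset \hom(E, V)$ factors through $E'$, and its image lies in some $5$-dimensional $\#$-isotropic subspace exactly when the induced map $E' \to V$ does. The argument used for the $\dim E = 5$ case then applies verbatim and shows that $Y'$ is spherical, normal with rational singularities, with coordinate ring
\[
A := K[Y'] = \bigoplus_{\ell(\lambda)\le 4} \bS_\lambda E' \otimes V^*_{[\lambda]}.
\]

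The main strategy is to apply the geometric technique (Theorem~\ref{thm:geom-tech}) with $\xi = E' \otimes (V/\cR_5)^*$ over $G/P_5$; the ideal generators of $A$ in degree $d$ are encoded by $\rH^{d-1}(G/P_5; \bigwedge^d \xi)$. The Grosshans deformation (Section~\ref{sec:grosshans}) degenerates $A$ to the multi-graded Cartan algebra $\gr(A)$, which is Koszul by Inamdar (\S\ref{sec:gradedGalg}) and has four algebra generators $\bS_{(1^k)} E' \otimes V^*_{[1^k]}$ for $k = 1, \ldots, 4$. Its ideal is thus generated by quadratic relations in these generators, whose $\GL(E')$-shapes are partitions of the form $\mu = (2^{\min(j,k)}, 1^{|j-k|})$, so $\mu_1 \le 2$. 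By upper semicontinuity of Betti numbers in the flat Grosshans family, the minimal ideal generators of $A$ also satisfy $\mu_1 \le 2$ and live in degrees at most $2 \cdot 4 = 8$. In degree $2$, a direct Bott calculation on $G/P_5$ (using $\Sym^2 V = V_{2\omega_1} \oplus V_{\omega_6}$ and the irreducibility $\bigwedge^2 V = V_{\omega_3}$) identifies the unique contribution as $\Sym^2 E' \otimes V^*_{\omega_1}$, the polarization of the $\#$-relations.

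The crux, and the main obstacle, is to eliminate candidate ideal generators in every degree $3 \le d \le 8$. Following the spectral-sequence scheme of Theorem~\ref{thm:3copf4quad}, filter $\xi$ by its $P_5$-composition series and let $\xi'$ be the semisimple associated graded. Compute $\rH^i(G/P_5; \bigwedge^{i+1}\xi')$ by the relative Borel--Weil--Bott theorem (with computer assistance, as Kra\'skiewicz provided for the $\rF_4$ case), and list every summand $\bS_\mu E' \otimes V^*_\nu$ occurring with $\mu_1 \le 2$ and $|\mu| = d$. For each such candidate, verify via \texttt{LiE} that $\bS_\mu E' \otimes V^*_\nu$ is not a direct summand of $(E' \otimes V) \otimes A_{d-1}$: this rules it out as a minimal ideal generator and forces it to cancel in the spectral sequence converging from the cohomology of $\xi'$ to that of $\xi$. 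The $\GL(E')$-duality $\bS_\mu E' \otimes V^*_\nu \leftrightarrow \bS_{\mu^\dagger} E' \otimes V^*_\nu$, noted in the remark after Theorem~\ref{thm:3copf4quad}, reduces the effective exterior-power computations to low degrees and is what keeps the problem tractable. The difficulty is essentially computational: while the $\rF_4$ proof handled only the strata $d=5,6$, here one must address six strata on the $25$-dimensional variety $G/P_5$, whose Bott structure is considerably more intricate than that of its $\rF_4$ counterpart, and this is why the statement appears as a conjecture rather than a theorem.
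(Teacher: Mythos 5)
This statement is stated as a \emph{conjecture} in the paper; the authors provide no proof of it, so there is nothing to compare your proposal against directly. What you have written is a plausible plan of attack, correctly modeled on the paper's own proofs of Theorems~\ref{thm:3copf4quad} and~\ref{thm:3cope6quad}, and you have yourself identified the reason it is a plan rather than a proof: the Bott/spectral-sequence computation over $G/P_5$ has not been carried out. That is exactly the right diagnosis, and the degree bound, the degree-$2$ identification via $\Sym^2 V = V_{2\omega_1}\oplus V_{\omega_6}$ and $\bigwedge^2 V = V_{\omega_3}$, and the $\GL(E')$-duality trick for taming the computation are all correctly extracted from the paper's $\rF_4$ argument.

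Two remarks on details you treat a bit quickly. First, the step ``$Y' = Y\cap\hom(E',V)$ is the image of $\hom(E',\cR_5)$ over $G/P_5$, and this projection is birational'' is not immediate. When $\dim E' = 4$, the generic image of $\phi\in Y'$ is a $4$-dimensional $\#$-isotropic space $W_4$, which sits inside a hyperline as the intersection $W_5\cap W'_5$ of two maximal $\beta$-isotropics of opposite families; one has to know that each such $W_4$ lies in exactly one $W_5$ (rather than a positive-dimensional family of them) and that the hyperline itself is uniquely determined. Moreover the remark after the $5$-copies theorem explicitly flags the ambiguity between $\cR_5$ and $\cR'_5$ (as in the $\rD_n$ spinor phenomenon), and part of why the conjecture is stated for $\dim E'=4$ rather than $5$ is that $\bigwedge^5 E'=0$ kills the obstruction degree-$5$ equations --- so the analysis of the birational model is genuinely part of the work, not a preliminary to dispose of in a sentence. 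Second, the assertion that every minimal ideal generator has $\GL(E')$-shape $\mu$ with $\mu_1\le 2$ is inherited from the paper's own Grosshans argument, but strictly speaking the semicontinuity one gets from the flat Grosshans family compares $\Tor_1^{\Sym(E\otimes V)}(K,-)$ between $A$ and $\gr(A)$, and the $\mu_1\le 2$ bound for $\gr(A)$ is a statement about its ideal over the Cartan polynomial ring $\Sym(\bigoplus_k\bigwedge^k E'\otimes V^*_{[1^k]})$, not over $\Sym(E'\otimes V)$. This gap is present in the paper's proof of Theorem~\ref{thm:3copf4quad} as well, so you are not introducing new error, but if you were to actually write this up you should pin down the precise semicontinuity statement being used.

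The essential gap is the one you name: without the computed lists $L(d)$ for $3\le d\le 8$ over $G/P_5$ and a verification that each candidate fails to appear in $(E'\otimes V)\otimes A_{d-1}$, this remains a conjecture. Your outline is the right one; it just hasn't been executed.
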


\subsection{3 copies of $V$.}

We now take $E$ to be 3-dimensional and set $X = \hom(E,V)$. Consider the vector bundle $\hom(E, \cR_{3})$ over $G/P_4$. Let $Y \subset X$ be the image of the projection of $\hom(E,\cR_{3})$ onto $X$. 

\begin{theorem} 
The variety $Y$ is spherical and normal with rational singularities.
Its coordinate ring decomposes as
\[
K[Y]=\bigoplus_\lambda \bS_\lambda E \otimes \rH^0(G/P_4; \bS_\lambda \cR_3^*) = \bigoplus_\lambda \bS_\lambda E \otimes V^*_{[\lambda]},
\]
where $[\lambda] = (\lambda_1 - \lambda_2, 0, \lambda_2 - \lambda_3, \lambda_3,0,0)$.
\end{theorem}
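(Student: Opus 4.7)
The plan is to mirror the arguments for the $6$ copies case in $\rF_4$ (Theorem~\ref{thm:6copiesF4}) and the $5$ copies case of $\rE_6$ above. The desingularization is the vector bundle $\hom(E,\cR_3)$ over $G/P_4$; a generic point of $Y$ corresponds to a $3$-dimensional $\#$-isotropic subspace that uniquely lifts to $G/P_4$, so the map $\hom(E,\cR_3)\to Y$ is birational. The geometric technique (Theorem~\ref{thm:geom-tech}) combined with \eqref{eqn:cauchy-filt} then yields
\[
K[\tilde Y]\approx\bigoplus_{\ell(\lambda)\le 3}\bS_\lambda E\otimes\rH^0(G/P_4;\bS_\lambda\cR_3^*),
\]
where $\tilde Y$ denotes the normalization of $Y$.

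The heart of the proof is the identification $\rH^0(G/P_4;\bS_\lambda\cR_3^*)=V^*_{[\lambda]}$ together with vanishing of higher cohomology. I would factor the projection $G/B\to G/P_4$ through the intermediate parabolic $\hat P\supset B$ such that $G/\hat P$ parameterizes the partial flags $W_1\subset W_2\subset W_3$; then $G/\hat P\to G/P_4$ is the relative $\GL_3$-flag variety $\Fl(\cR_3)$. From the preceding line-bundle table on $G/B$, a direct computation gives
\[
\cR_1^{\otimes-\lambda_1}\otimes(\cR_2/\cR_1)^{\otimes-\lambda_2}\otimes(\cR_3/\cR_2)^{\otimes-\lambda_3}\cong\cL_{[\lambda]},
\]
and since the coefficients of $\omega_2,\omega_5,\omega_6$ in $[\lambda]$ all vanish, $\cL_{[\lambda]}$ descends along $G/B\to G/\hat P$. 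Relative Bott applied to the $\GL_3$-flag bundle $G/\hat P\to G/P_4$ then identifies the further pushforward with $\bS_\lambda\cR_3^*$ and kills the higher direct images, while Kempf vanishing (\S\ref{sec:schurmodules}) together with the Leray spectral sequence delivers the full cohomology calculation.

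The remaining claims then follow exactly as in Theorem~\ref{thm:6copiesF4}. The bundle $E\otimes\cR_3^*$ is irreducible as a homogeneous $P_4$-module, so \cite[Proposition~2.2]{landsbergweyman} implies $K[\tilde Y]$ is generated in degree $1$ over $A$; hence $\tilde Y=Y$ and $Y$ is normal. Rational singularities follow from the vanishing of higher cohomology of $\Sym(E\otimes\cR_3^*)$ established above, via Theorem~\ref{thm:geom-tech}. Finally, the coordinate ring is multiplicity-free because $\lambda\mapsto[\lambda]$ is injective on partitions with $\ell(\lambda)\le 3$ (one recovers $\lambda_3=[\lambda]_4$, $\lambda_2=[\lambda]_3+[\lambda]_4$, and $\lambda_1=[\lambda]_1+[\lambda]_3+[\lambda]_4$), giving sphericality.

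The main point requiring care is the identification of the pushforward through this two-step factorization and the matching of duality conventions so that one obtains $V^*_{[\lambda]}$ rather than $V_{[\lambda]}$; since $V_{\omega_1}\not\cong V^*_{\omega_1}$ for $\rE_6$ this is genuine bookkeeping, but it is forced by the explicit line-bundle table together with the conventions fixed in \S\ref{sec:schurmodules}.
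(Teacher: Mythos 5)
Your argument is correct and follows the route the paper intends: desingularize by $\hom(E,\cR_3)$ over $G/P_4$, apply the geometric technique plus the Cauchy filtration, identify $\rH^0(G/P_4;\bS_\lambda\cR_3^*)=V^*_{[\lambda]}$ by relative Borel--Weil and Leray (the precise analogue of Proposition~\ref{prop:F4cohomology}), and deduce normality from \cite[Proposition~2.2]{landsbergweyman} and rational singularities from the vanishing of higher cohomology of $\Sym(E\otimes\cR_3^*)$. The paper's own proof is just the one-line reference to Theorem~\ref{thm:6copiesF4}; you supply exactly the elided details, with the line-bundle computation $\cR_1^{\otimes-\lambda_1}\otimes(\cR_2/\cR_1)^{\otimes-\lambda_2}\otimes(\cR_3/\cR_2)^{\otimes-\lambda_3}=\cL(\lambda_1-\lambda_2,0,\lambda_2-\lambda_3,\lambda_3,0,0)$ checking out against the table, and your two-step Leray factorization through $\hat P=P_{\{1,3,4\}}$ being only a bookkeeping variant of the one-step pushforward (with fiber $\Fl(\cR_3)\times(\text{other factors})$) the paper uses elsewhere.
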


\begin{proof}
Similar to the proof of Theorem~\ref{thm:6copiesF4}.
\end{proof}

\begin{theorem} \label{thm:3cope6quad} 
The defining ideal $Y$ is generated by quadrics.
\end{theorem}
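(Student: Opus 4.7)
The plan is to follow the blueprint of the proof of Theorem~\ref{thm:3copf4quad}, transporting the argument from type ${\rm F}_4$ to type ${\rm E}_6$. In the notation of \S\ref{sec:geomtech}, I would define the bundle $\xi$ on $G/P_4$ by the short exact sequence
\[
0 \to \xi \to E \otimes V \to E \otimes \cR_3^* \to 0.
\]
By Theorem~\ref{thm:geom-tech}, quadratic generation of the defining ideal of $Y$ is equivalent to the vanishing
\[
\rH^i(G/P_4; \bigwedge^{i+1}\xi) = 0 \qquad \text{for all } i > 1.
\]

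The first step is to reduce this to a finite check using Grosshans' deformation. The degenerate ring $\gr(K[Y]) = \bigoplus_\lambda \bS_\lambda E \otimes V^*_{[\lambda]}$ (with multiplication the Cartan product) has Cartan-fundamental generators $\bS_{(1^k)}E \otimes V^*_{[(1^k)]}$ for $k = 1,2,3$, each living in a single Schur functor with $\mu_1 = 1$. By \S\ref{sec:gradedGalg}, the ideal of this quotient is generated by multi-graded quadrics in these generators, so each minimal generator lies in $\bS_{(1^a)}E \otimes \bS_{(1^b)}E \otimes \cdots$, whose Schur constituents $\bS_\mu E$ satisfy $\mu_1 \leq 2$. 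Since the generators live in polynomial degrees $\leq 3$, the theorem at the end of \S\ref{sec:grosshans} lifts this to the bound that the ideal of $K[Y]$ itself is generated in polynomial degrees $\leq 2\cdot 3 = 6$, and by semicontinuity of $\GL(E)\times G$-isotypic Betti numbers along the flat family, every minimal generator of $I(Y)$ is a constituent of some $\bS_\mu E \otimes V_\nu$ with $\mu_1 \leq 2$. Thus it suffices to verify the cohomology vanishing above for $i = 2,3,4,5$ and only in representations with $\mu_1 \leq 2$.

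Next, I would replace $\xi$ by its associated graded $\xi'$ for the Jordan--H\"older filtration as a rational $P_4$-module. By the line bundle identifications recorded for $E_6$ earlier in this section (the analog of Proposition~\ref{prop:F4linebundles}), $\xi'$ is an explicit direct sum of line bundles, so the exterior powers $\bigwedge^{i+1}\xi'$ split into line bundles whose cohomology is accessible via Bott's algorithm on $G/B \to G/P_4$ combined with the Leray spectral sequence. I would delegate to a computer algebra system the enumeration, for $2 \le i \le 5$, of those constituents $\bS_\mu E \otimes V_\nu$ with $\mu_1 \leq 2$ appearing in $\rH^i(G/P_4; \bigwedge^{i+1}\xi')$, producing finite lists $L(i)$. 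Finally, mimicking the $F_4$ proof, for each candidate $W \in L(i)$ I would verify (via LiE) that $W$ is \emph{not} a constituent of
\[
(E \otimes V) \otimes \bigoplus_{|\lambda|=i-1} \bS_\lambda E \otimes V^*_{[\lambda]},
\]
which forces $W$ to be killed by a differential in the spectral sequence computing $\rH^\bullet(\bigwedge^{i+1}\xi)$ from $\rH^\bullet(\bigwedge^{i+1}\xi')$, rather than to survive as a minimal generator of $I(Y)$.

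The main obstacle will be the sheer scale of the cohomology computation on $G/P_4$, which has dimension $29$, with $\dim(E\otimes V)=81$ and exterior powers up to order $6$. As noted in the remark following Theorem~\ref{thm:3copf4quad}, applying the duality on $\GL(E)\times G$-representations that sends $\bS_\mu E \otimes V_\nu$ to $\bS_{\mu^\dagger}E \otimes V_\nu$ (and swaps the roles of exterior and symmetric powers of $\xi'$) lets us exploit the restriction $\mu_1 \leq 2$: after dualizing it suffices to work with $\dim E = 2$, which should bring the calculation within reach of a workstation. Carrying out the enumeration and the cancellation checks in this reduced setting is the only real piece of arithmetic left to perform.
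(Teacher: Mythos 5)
Your proposal matches the paper's proof in essentially every respect: the same reduction via Grosshans' deformation to representations with $\mu_1 \le 2$ in polynomial degree $\le 6$, the same passage to the Jordan--H\"older associated graded $\xi'$, the same spectral-sequence cancellation criterion via the tensor product test, and the same duality trick reducing to $\dim E = 2$. The only (minor) procedural difference is that the paper dispatches polynomial degrees $3$ and $4$ directly by checking the Hilbert function of the candidate quadric ideal in Macaulay2, and reserves the $L(i)$-cancellation argument for degrees $5$ and $6$ only, whereas you propose to run the cohomology computation in all four degrees --- both work, but the paper's shortcut avoids the larger low-degree cohomology enumeration.
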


\begin{proof} 
We can verify using Macaulay2 that the ideal generated by quadrics (these are the polarizations of the quadrics defined in the next section) gives the correct Hilbert function up to degree $4$. For the rest of the Hilbert function, the proof is analogous to the proof of Theorem~\ref{thm:3copf4quad}, and we get the lists
\begin{align*}
L(5) &: (2,2,1;0,0,0,0,0,1),\\
L(6) &: (2,2,2;0,0,0,0,0,0).
\end{align*}
They can be ruled out in the same exact way as in Theorem~\ref{thm:3copf4quad}. 
\end{proof}

\subsection{1 copy of $V$.} \label{sec:1copyE6}

Now let $Y$ be the affine cone over the highest weight orbit in $V$. Then the resolution in question was calculated in \cite{enrightetall} (and in the $\rE_7$ case to be discussed in \S\ref{sec:1copyE7}). In that paper the interesting interpretation of the coordinate rings of the orbit closures in terms of irreducible modules for parabolic algebras of Hermitian type was described. The resolution is then an example of a parabolic BGG resolution.

We label our variables as $x_i,y_i$ ($i=1,\dots,6$), and
$z_{ij}$ ($1 \le i < j \le 6$). Let $Z$ be a skew-symmetric matrix built from $z_{ij}$. Then
\[
P = \Pf(Z) + \sum_{i \ne j} z_{ij} x_i y_j
\]
is the unique cubic invariant on $V$, where $z_{ji} = -z_{ij}$ for $j>i$. The partial derivatives of $P$ span a copy of $V^*_{\omega_1}$ in degree 2.

\begin{proposition} \label{prop:E6betti}
The Betti table of $Y$ is
\small 
\begin{verbatim}
       0  1  2   3   4   5   6   7  8  9 10
total: 1 27 78 351 650 702 650 351 78 27  1
    0: 1  .  .   .   .   .   .   .  .  .  . 
    1: . 27 78   .   .   .   .   .  .  .  .
    2: .  .  . 351 650 351   .   .  .  .  . 
    3: .  .  .   .   . 351 650 351  .  .  .
    4: .  .  .   .   .   .   .   . 78 27  .
    5: .  .  .   .   .   .   .   .  .  .  1
\end{verbatim} 
\normalsize
\end{proposition}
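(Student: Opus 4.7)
The plan is to apply the geometric technique of Theorem~\ref{thm:geom-tech} to the canonical desingularization of $Y$. Since $Y\subset V$ is the affine cone over the Cayley plane $G/P_1 \subset \bP(V_{\omega_1})$, the total space $\widetilde Y = \cR_1 \subset V\times G/P_1$ of the tautological line bundle maps birationally onto $Y$. The bundle $\cR_1^*$ is the ample generator $\cL(\omega_1)$, so by Kempf vanishing $\rH^j(G/P_1;\Sym^d\cR_1^*) = 0$ for $j>0$ and all $d\ge 0$. Thus with $\cV = \cO$, Theorem~\ref{thm:geom-tech} immediately yields $Y=\widetilde Y$ (so $Y$ is normal with rational singularities, and $K[Y] = \bigoplus_{d\ge 0} V^*_{d\omega_1}$) together with the minimal free resolution
\[
\bF_i \;=\; \bigoplus_{j\ge 0} \rH^j\bigl(G/P_1;\,\textstyle\bigwedge^{i+j}(V/\cR_1)^*\bigr)\otimes_K A(-i-j).
\]

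The computation of $\rH^j(G/P_1;\bigwedge^{k}(V/\cR_1)^*)$ is a Borel--Weil--Bott calculation. As a $P_1$-module, $V/\cR_1$ admits a composition series whose associated graded decomposes under the Levi of type $\rD_5$ as the $10$-dimensional vector representation plus the $16$-dimensional half-spinor representation; pulling back to $G/B$ and passing to the associated graded of $\bigwedge^k$ then exhibits each exterior power as an iterated extension of line bundles $\cL(\mu)$ to which one applies Bott's algorithm. Because $\omega_1$ is minuscule and $G/P_1$ is Hermitian symmetric, the resulting collection of weights is controlled by the minuscule poset $W^{P_1}$, and each $\bigwedge^k(V/\cR_1)^*$ in fact has cohomology concentrated in a single degree, so no cancellations arise in the spectral sequence of the filtration. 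The resolution so produced coincides with a parabolic BGG resolution in the sense of \cite{enrightetall}, and the terms (both ranks and degrees) can be read off directly from the Hasse diagram of $W^{P_1}$.

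As an independent numerical check I would compute the Hilbert series $H_Y(t) = \sum_d \dim V_{d\omega_1}\,t^d$ in LiE and multiply by $(1-t)^{27}$, obtaining the alternating sum $\sum_{i,j}(-1)^i \beta_{i,j}\,t^j$. Combined with the self-duality of the Betti table --- which follows from the Gorenstein property of $Y$, itself a consequence of $\omega_{G/P_1}$ being a power of the ample generator of $\Pic(G/P_1)$ --- this pins down each $\beta_{i,j}$ uniquely, since in each bidegree $(i,j)$ only one $\bF_i$ contributes. A direct Macaulay2 calculation in a large prime characteristic, transferred to characteristic $0$ by semicontinuity of Betti numbers, provides an additional sanity check on the total Betti numbers tabulated.

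The main obstacle is purely bookkeeping: even knowing a priori that only one cohomological degree contributes for each $k\in\{0,\dots,26\}$, tracking the $P_1$-module structure of $\bigwedge^k(V/\cR_1)^*$ and running Bott's algorithm for $\rE_6$ on each summand is combinatorially dense. The conceptual content, however, is wholly encoded in the minuscule poset for $\omega_1$, and this is what makes the BGG approach of \cite{enrightetall} a clean route to the stated Betti numbers.
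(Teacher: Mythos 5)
Your main route — desingularize $Y$ by the total space of $\cR_1$ over the minuscule flag variety $G/P_1$ and feed $\bigwedge^k(V/\cR_1)^*$ into the geometric technique of Theorem~\ref{thm:geom-tech} — is genuinely different from the paper's proof, which never touches Bott's theorem for this statement. The paper instead invokes Kostant's theorem for quadratic generation, a Macaulay2 computation for the first four terms, the Gorenstein property (via the Hilbert polynomial from \cite{2kraskiewicz}) to get the last four terms by duality, and the Euler characteristic of the complex for the middle term. Your identification of the desingularization and your observation that Kempf vanishing for $\Sym^d \cR_1^* = \cL(d\omega_1)$ gives normality and rational singularities are correct, and the connection you draw to \cite{enrightetall} is the right conceptual frame.

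However, the core of your argument is not carried out, and the step that is merely asserted is exactly where the difficulty lives. The bundle $V/\cR_1$ is \emph{not} irreducible as a $P_1$-module: it is a nonsplit extension whose associated graded under the $\rD_5$ Levi is the $16$-dimensional half-spinor representation and the $10$-dimensional vector representation. Consequently $\bigwedge^k(V/\cR_1)^*$ carries a genuine filtration, and passing from the cohomology of the associated graded (computed by Bott) back to the cohomology of $\bigwedge^k(V/\cR_1)^*$ requires ruling out differentials in the spectral sequence of that filtration. Your sentence ``so no cancellations arise in the spectral sequence of the filtration'' is stated as a conclusion but is not justified; note that the paper does treat this issue carefully in closely analogous situations (e.g., the proof of Theorem~\ref{thm:3copf4quad}, where cancellations must be ruled out by an explicit representation-theoretic argument). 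Minusculeness of $\omega_1$ controls the weight combinatorics of $W^{P_1}$, but it does not by itself imply that the filtered bundle has cohomology concentrated in a single degree for each $k$, nor that the spectral sequence degenerates.

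A second, smaller gap is in your ``numerical check.'' You observe that the Hilbert series plus Gorenstein self-duality ``pins down each $\beta_{i,j}$ uniquely, since in each bidegree $(i,j)$ only one $\bF_i$ contributes.'' But the statement that for each internal degree $j$ at most one homological degree $i$ has $\beta_{i,j}\neq 0$ is a property \emph{of the answer}; you cannot assume it before you know the Betti table. The paper's argument sidesteps this by pinning down the first four and last four columns independently (Macaulay2 plus Gorenstein duality) and then using the Euler characteristic only for the single remaining column, where there is genuinely no ambiguity. If you want your Hilbert-series argument to stand on its own, you need an a priori bound (such as a regularity bound, or the Bott computation itself) showing which $(i,j)$ can be nonzero.
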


\begin{proof}
From Kostant's theorem, the ideal is generated by quadrics, which we have just discussed. Using Macaulay2, we can verify that the first 4 terms of the resolution are correct. But we know it is Gorenstein (it follows from the Hilbert polynomial calculated in \cite[\S 8]{2kraskiewicz}), so the last 4 terms of the resolution are correct. Then the middle term is determined by setting the Euler characteristic equal to 0. 
\end{proof}

In particular, the Hilbert series is
\[
\frac{1+10 T+28 T^{2}+28 T^{3}+10 T^{4}+T^{5}}{(1-T)^{17}}.
\]
The terms of the resolution are
\begin{align*}
\begin{array}{lll}
\bF_0 = A & 
\bF_1 = V_{\omega_1}(-2) &
\bF_2 = V_{\omega_2}(-3) \\
\bF_3 = V_{\omega_5}(-5) &
\bF_4 = V_{\omega_1 + \omega_6}(-6) &
\bF_5 = V_{2\omega_1}(-7) \oplus V_{2\omega_6}(-8) \\
\bF_6 = V_{\omega_1 + \omega_6}(-9) &
\bF_7 = V_{\omega_3}(-10) &
\bF_8 = V_{\omega_2}(-12) \\
\bF_9 = V_{\omega_6}(-13) & 
\bF_{10} = A(-15)
\end{array}
\end{align*}


\section{Type ${\rm E}_7$.}

We assume that $K$ is a field of characteristic $0$.

\subsection{Description of homogeneous spaces}

We label the diagram as
\[
\xymatrix @-1.2pc { & & 2 & & & \\ 1 \ar@{-}[r] & 3 \ar@{-}[r] & 4
  \ar@{-}[r] \ar@{-}[u] & 5 \ar@{-}[r] & 6 \ar@{-}[r] & 7 }
\]
The minimal representation $V = V_{\omega_7}$ has dimension $56$ and all representations are self-dual. The description of flag varieties is similar to the case of ${\rm E}_6$ and we follow \cite[\S 8]{carr}. There is a unique $G$-invariant in $\Sym^4 V$, which we can turn into a map $t \colon \Sym^3 V \to V$. A subspace $W$ is an {\bf inner ideal} if $t(W,V,W) \subset W$. The Levi subgroup of a parabolic subgroup stabilizing a $12$-dimensional inner ideal is of type ${\rm D}_6$, so such subspaces are equipped with a nondegenerate bilinear form $\beta$. The points of the flag variety $G/B$ are described by a $12$-dimensional inner ideal $W_{12}$ along with a maximal $\beta$-isotropic flag $W_1 \subset W_2 \subset W_3 \subset W_4 \subset W_6$ and $W_4 \subset W'_6$ inside of $W_{12}$. Each $W_i$ is also an inner ideal, and exactly one of $W_6$ and $W'_6$ is contained in a $7$-dimensional inner ideal $W_7$, by convention this one will be $W'_6$. The partial flag varieties are obtained by forgetting some of the subspaces, the ordering of the nodes corresponds to the following ordering of the subspaces: $W_{12}, W_7, W_6, W_4, W_3, W_2, W_1$. Also we define $W_5 = W_6 \cap W'_6$.

This gives us tautological subbundles $\cR_1, \dots, \cR_{12}$ and
$\cR'_6$. Also, we have a map $\Sym^2 \cR_{12} \to \cM$ where $\cM =
\cL(-1,0,0,0,0,0,0)$. Then following the calculations from before, we can deduce that 
\begin{align*}
  \cR_1 = \cL(0,0,0,0,0,0,-1), &\quad& \cR_2/\cR_1 =
  \cL(0,0,0,0,0,-1,1),\\
  \cR_3/\cR_2 = \cL(0,0,0,0,-1,1,0), &\quad& \cR_4/\cR_3 =
  \cL(0,0,0,-1,1,0,0),\\
  \cR_5/\cR_4 = \cL(0,-1,-1,1,0,0,0), &\quad& \cR_6/\cR_5 =
  \cL(0,1,-1,0,0,0,0),\\
  \cR'_6/\cR_5 = \cL(-1,-1,1,0,0,0,0), &\quad& \cR_7/\cR_6 =
  \cL(-1,-1,1,0,0,0,0),\\
  \cR_7/\cR'_6 = \cL(0,1,-1,0,0,0,0), &\quad& \cR_8/\cR_7 =
  \cL(-1,1,1,-1,0,0,0),\\
  \cR_{9}/\cR_8 = \cL(-1,0,0,1,-1,0,0), &\quad& \cR_{10}/\cR_9 =
  \cL(-1,0,0,0,1,-1,0),\\
  \cR_{11}/\cR_{10} = \cL(-1,0,0,0,0,1,-1), &\quad& \cR_{12}/\cR_{11}
  = \cL(-1,0,0,0,0,0,1).
\end{align*}

The dimension of $G/P_1$ is 33. 

\subsection{12 copies of $V$.}

Let $E$ be a 12-dimensional vector space and let $Y \subset \hom(E,V)$ be the image under the projection $\pi$ of $\hom(E,\cR_{12})$ over $G/P_1$. Let $A = \Sym(E \otimes V)$. As in the other cases, we can prove the following.

\begin{theorem} 
The variety $Y$ is the $\GL(E) \times G$-orbit of $E^* \otimes W$ where $W$ is an inner ideal, so $\pi$ is birational. The variety $Y$ is normal with rational singularities and its coordinate ring is
\[
\rH^0(G/P_1; \Sym(E \otimes \cR_{12}^*)) = \bigoplus_\lambda \bS_\lambda E \otimes \rH^0(G/P_1; \bS_\lambda \cR_{12}^*).
\]
\end{theorem}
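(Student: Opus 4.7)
The plan is to adapt the proof of Theorem~\ref{thm:6copiesF4} (6 copies of $V$ in type $\rF_4$), with the symplectic Levi of type $\rC_3$ replaced by the orthogonal Levi of type $\rD_6$ that stabilizes a $12$-dimensional inner ideal.

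First I would establish birationality of $\pi$. A generic point $\phi \in \hom(E,\cR_{12})$ has full rank $12$; since $\phi(E) \subseteq W_{12}$ and both subspaces are $12$-dimensional, we get $\phi(E) = W_{12}$, so $W_{12}$ is uniquely recovered from its image. Hence $\pi$ is birational onto its image $Y$; in particular $Y$ is irreducible and equals the $\GL(E) \times G$-orbit closure of $E^* \otimes W$.

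Next I would verify the higher cohomology vanishing of $\bS_\lambda \cR_{12}^*$ on $G/P_1$, which is the main technical input. Let $\pi' \colon G/B \to G/P_1$ be the relative flag variety of $\cR_{12}$; since the Levi of $P_1$ is of type $\rD_6$ and $\cR_{12}$ carries a non-degenerate symmetric bilinear form with values in $\cM$, this is the relative orthogonal flag variety. Arguing as in the proof of Proposition~\ref{prop:F4cohomology}, the relative Borel--Weil--Bott theorem for $\rD_6$ identifies $\pi'_*$ of the appropriate product of tautological line bundles with an orthogonal Schur functor $\bS_{[\nu]} \cR_{12}^*$ twisted by a power of $\cM$, with vanishing higher direct images. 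Combining the branching of $\bS_\lambda$ from $\GL_{12}$ to $\bO_{12}$, Kempf vanishing on $G/B$, and the Leray spectral sequence for $\pi'$ then yields
\[
\rH^i(G/P_1;\bS_\lambda \cR_{12}^*) = 0 \quad \text{for all } i > 0,
\]
and identifies $\rH^0$ as a direct sum of irreducible $G$-modules.

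With this vanishing in hand, I would invoke Theorem~\ref{thm:geom-tech} applied to the desingularization $\hom(E,\cR_{12}) \to Y$. Together with the good filtration~\eqref{eqn:cauchy-filt}, namely $\Sym(E \otimes \cR_{12}^*) \approx \bigoplus_\lambda \bS_\lambda E \otimes \bS_\lambda \cR_{12}^*$, the above vanishing shows that the higher cohomology of $\Sym(E \otimes \cR_{12}^*)$ vanishes, so $Y$ has rational singularities and the coordinate ring of the normalization $\tilde Y$ is the claimed direct sum. Finally, for normality: since $E \otimes \cR_{12}^*$ is an irreducible homogeneous bundle on $G/P_1$, \cite[Proposition 2.2]{landsbergweyman} applies and shows that $K[\tilde Y]$ is generated in degree $1$ as an $A$-module, which forces $\tilde Y = Y$.

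The hardest step will be the relative Borel--Weil--Bott calculation, because type $\rD_6$ has the additional subtlety of two half-spin chambers. However, since we only push forward Schur functors of the tautological bundle $\cR_{12}$ (and not spinor bundles), no half-spin complications arise and the reduction to ordinary Kempf vanishing on $G/B$ proceeds just as in the $\rF_4$ case.
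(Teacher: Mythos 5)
Your proposal is essentially the paper's approach: for this theorem the paper only writes ``As in the other cases, we can prove the following,'' referring back to the argument for Theorem~\ref{thm:6copiesF4}, which is exactly what you reproduce. All the key ingredients are present and correctly placed: birationality via the generic-rank argument, the relative Borel--Weil--Bott computation over the type-$\rD_6$ Levi to establish vanishing of $\rH^{>0}(G/P_1;\bS_\lambda\cR_{12}^*)$, the Cauchy filtration~\eqref{eqn:cauchy-filt} together with Theorem~\ref{thm:geom-tech} to identify $K[\tilde Y]$ and get rational singularities, and \cite[Proposition 2.2]{landsbergweyman} to conclude $\tilde Y = Y$.

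One small correction on your final remark: the genuine type-$\rD$ wrinkle here is not about spinor bundles (indeed none occur), but rather that when the $\bO_{12}\!\downarrow\!\GL_{12}$ branching produces a partition $\mu$ with $\ell(\mu)=6$, the orthogonal Schur module $\bS_{[\mu]}$ splits over the $\rD_6$ Levi into two irreducibles $V_\mu^+\oplus V_\mu^-$ (a phenomenon familiar from \S\ref{sec:typeDn}). This is harmless: each summand, twisted by the appropriate power of $\cM^*=\cL(1,0,\dots,0)$, still corresponds to a $G$-dominant weight, so Kempf vanishing applies termwise and the higher cohomology of $\bS_\lambda\cR_{12}^*$ vanishes as needed. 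Your conclusion stands, but the justification should be phrased in terms of this splitting rather than spin representations.
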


The last term in the resolution of $Y$ is $(\bigwedge^{12} E)^{\otimes 44} \otimes
V_{55\omega_1} \otimes A(-528)$, so $Y$ is not Gorenstein. The
codimension of $Y$ is $495$.

\begin{conjecture} 
The Tor modules $\Tor_i^A(K, K[Y])$ are Schur functors on $E$ and $V$.
\end{conjecture}

\subsection{6 copies of $V$.} \label{sec:6copiesE7}

Let $E$ be a 6-dimensional vector space and let $Y \subset \hom(E,V)$ be the image of $\hom(E,\cR_6)$ over $G/P_3$. Define
\[ 
[\lambda] = (0, \lambda_5 - \lambda_6, \lambda_5 + \lambda_6, \lambda_4 - \lambda_5, \lambda_3 - \lambda_4, \lambda_2 - \lambda_3, \lambda_1 - \lambda_2).
\]
As before, we get the following result.

\begin{theorem} 
The variety $Y$ is spherical and normal with rational singularities. Its coordinate ring decomposes as follows
\[
K[Y]=\bigoplus_\lambda \bS_\lambda E \otimes \rH^0(G/P_3; \bS_\lambda \cR_6^*) = \bigoplus_\lambda \bS_\lambda E \otimes V^*_{[\lambda]}.
\]
\end{theorem}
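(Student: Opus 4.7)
The plan is to follow the template established in Theorem~\ref{thm:6copiesF4} and the subsequent spherical-variety results: apply the geometric technique of \S\ref{sec:geomtech}, reduce the coordinate-ring identification to a sheaf cohomology calculation on $G/P_3$, and then invoke the normality and rational-singularity arguments from the earlier cases.

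The main step is to prove that for every partition $\lambda$ with $\ell(\lambda)\le 6$,
\[
\rH^i(G/P_3;\bS_\lambda \cR_6^*) = \begin{cases} V^*_{[\lambda]} & i=0,\\ 0 & i>0.\end{cases}
\]
I would factor through $G/B$ via $\pi\colon G/B\to G/P_3$. Removing node $3$ from the $\rE_7$ Dynkin diagram leaves $\rA_1\sqcup\rA_5$, so the semisimple part of the Levi of $P_3$ is $\SL_2\times\SL_6$, with the $\rA_5$-factor acting on $\cR_6$ as the standard representation. Using the explicit line-bundle formulas for $\cR_i/\cR_{i-1}$ stated just before the theorem, a direct summation gives
\[
\bigotimes_{i=1}^{6}\bigl(\cR_i/\cR_{i-1}\bigr)^{\otimes(-\lambda_i)} \cong \cL([\lambda]),
\]
with the claimed $[\lambda]$; this is a dominant weight since its first coordinate vanishes and the remaining coordinates are non-negative whenever $\lambda$ is a partition. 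Along the $\rA_5$-fiber direction of $\pi$, this bundle restricts to the line bundle that pushes down to $\bS_\lambda \cR_6^*$, while the $\rA_1$-direction contributes trivially because the first coordinate of $[\lambda]$ is zero. The relative Borel--Weil--Bott theorem then yields $\rR^{>0}\pi_*\cL([\lambda]) = 0$ and $\pi_*\cL([\lambda]) = \bS_\lambda \cR_6^*$, and combining this with Kempf vanishing on $G/B$ through the Leray spectral sequence gives the cohomology formula.

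With this in hand, Theorem~\ref{thm:geom-tech} applied with $\cV=\cO$, together with the Cauchy decomposition \eqref{eqn:cauchy-filt}, produces
\[
K[\tilde Y] = \bigoplus_\lambda \bS_\lambda E \otimes V^*_{[\lambda]},
\]
and the vanishing of the higher cohomology of $\Sym(E\otimes \cR_6^*)$ gives rational singularities. For the equality $\tilde Y=Y$ (hence normality), I would invoke \cite[Proposition 2.2]{landsbergweyman}: since $E\otimes \cR_6^*$ is irreducible as a $\GL(E)\times L_{P_3}$-bundle, $K[\tilde Y]$ is generated in degree $1$ over $A$, so the normalization map is an isomorphism. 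Sphericity is then an immediate consequence of the multiplicity-free decomposition.

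The main obstacle is the cohomology identification: carefully tracking how the line bundle $\cL([\lambda])$ on $G/B$ restricts to the fibers of $\pi$, and matching it against the $\rA_5$ weight data of $\bS_\lambda$ together with the contribution from the central torus of $L_{P_3}$. Once this bookkeeping is pinned down, the remainder of the argument is formal and mirrors the proofs of Theorem~\ref{thm:6copiesF4} and the spherical-case theorems of the $\rF_4$ and $\rE_6$ sections.
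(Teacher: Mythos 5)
Your proposal is correct and follows the same approach that the paper uses (the paper omits this proof, referring back to the template of Theorem~\ref{thm:6copiesF4}): compute cohomology on $G/P_3$ via the relative Borel--Weil--Bott theorem through the $\rA_1 \times \rA_5$ Levi fibration, apply the geometric technique with the Cauchy formula, and use irreducibility of $E \otimes \cR_6^*$ together with \cite[Proposition 2.2]{landsbergweyman} for normality. Your computation of $\bigotimes_i(\cR_i/\cR_{i-1})^{\otimes(-\lambda_i)}$ indeed reproduces the paper's definition of $[\lambda]$, and the dominance and vanishing claims check out.
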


\subsection{1 copy of $V$.} \label{sec:1copyE7}

As already remarked in \S\ref{sec:1copyE6}, the resolution of the affine cone over the highest weight orbit in $V$ was calculated in \cite{enrightetall}. In that paper the interesting interpretation of the coordinate rings of the orbit closures in terms of irreducible modules for parabolic algebras of Hermitian type was described.


\section{Type ${\rm E}_8$.}

We assume that $K$ is a field of characteristic $0$ in this section.

\subsection{Description of homogeneous spaces}

We label the Dynkin diagram of type $\rE_8$ as
\[
\xymatrix @-1.2pc { & & 2 & & & \\ 1 \ar@{-}[r] & 3 \ar@{-}[r] & 4
  \ar@{-}[r] \ar@{-}[u] & 5 \ar@{-}[r] & 6 \ar@{-}[r] & 7 \ar@{-}[r] &  8 }
\]
The projection $G/B \to G/P_1$ is a relative orthogonal flag variety of type $\rD_7$, so we have an orthogonal bundle $\cR_{14}$ on $G/B$ which is the pullback of a bundle on $G/P_1$. As before, we can define $\cR_i$ for $i=1,\dots,13$ and $\cR'_7$. With notation following the previous cases, we have the following calculations:
\begin{align*}
  \cR_1 = \cL(0,0,0,0,0,0,0,-1), &\quad& \cR_2/\cR_1 =
  \cL(0,0,0,0,0,0,-1,1),\\
  \cR_3/\cR_2 = \cL(0,0,0,0,0,-1,1,0), &\quad& \cR_4/\cR_3 =
  \cL(0,0,0,0,-1,1,0,0),\\
  \cR_5/\cR_4 = \cL(0,0,0,-1,1,0,0,0), &\quad& \cR_6/\cR_5 =
  \cL(0,-1,-1,1,0,0,0,0),\\
  \cR_7/\cR_6 = \cL(0,1,-1,0,0,0,0,0) &\quad& \cR_7'/\cR_6 =
  \cL(-1,-1,1,0,0,0,0,0) \\
  \cR_8/\cR_7 = \cL(-1,-1,1,0,0,0,0,0) &\quad& \cR_8/\cR'_7 =
  \cL(0,1,-1,0,0,0,0,0) \\
  \cR_9/\cR_8 = \cL(-1,1,1,-1,0,0,0,0) &\quad& \cR_{10}/\cR_9 =
  \cL(-1,0,0,1,-1,0,0,0) \\
  \cR_{11}/\cR_{10} = \cL(-1,0,0,0,1,-1,0,0) &\quad& \cR_{12}/\cR_{11} =
  \cL(-1,0,0,0,0,1,-1,0) \\
  \cR_{13}/\cR_{12} = \cL(-1,0,0,0,0,0,1,-1) &\quad& \cR_{14}/\cR_{13}
  = \cL(-1,0,0,0,0,0,0,1)
\end{align*}

This is similar to the previous two cases. Let $V = V_{\omega_8}$ be
the adjoint representation. 

The dimension of $G/P_1$ is 78. 

\subsection{14 copies of $V$.}

Let $E$ be 14-dimensional and let $Y \subset \hom(E,V)$ be the image of $\hom(E,\cR_{14})$ over $G/P_1$. Set $A = \Sym(E \otimes V)$. The codimension of $Y$ is 3198. Then the last term of the resolution of
$Y$ contains $(\bigwedge^{14} E)^{\otimes 234} \otimes V_{1615\omega_1} \otimes A(-3276)$ so $Y$ is not Gorenstein. But we can show that $Y$ is normal with rational singularities (similar to ${\rm F}_4$ case). 

\begin{conjecture} 
The Tor modules $\Tor_i^A(K, K[Y])$ are Schur functors on $E$ and $V$.
\end{conjecture}

\subsection{7 copies of $V$.}

For a 7-dimensional vector space $E$, the situation is similar to \S\ref{sec:6copiesE7}. In this situation, we define
\[
[\lambda] = (0, \lambda_6 - \lambda_7, \lambda_6 + \lambda_7, \lambda_5 - \lambda_6, \lambda_4 - \lambda_5, \lambda_3 - \lambda_4, \lambda_2 - \lambda_3, \lambda_1 - \lambda_2).
\]

\subsection{1 copy of $V$.}

The affine cone over $G/P_8$ has dimension 58 and hence codimension
190 inside of $V$. Just counting representations by hand, the
resolution starts like this:
\[
(V_{\omega_8} \oplus V_{\omega_2} \oplus V_{\omega_1}) \otimes A(-3)
\to (K \oplus V_{\omega_1}) \otimes A(-2) \to A
\]



\begin{thebibliography}{EHP}

\bibitem[Ada]{adams} J.~F. Adams, {\it Lectures on Exceptional Lie Groups}, Chicago Lectures in Mathematics, University of Chicago Press, Chicago, IL, 1996.

\bibitem[And]{anderson} David Anderson, Degeneracy loci and $G_2$ flags, Ph.D. thesis, University of Michigan, 2009.

\bibitem[Asc]{aschbacher} Michael Aschbacher, The $27$-dimensional module for $E_6$, I, {\it Invent. Math} {\bf 89} (1987), no.~1, 159--195.

\bibitem[BI]{brion} M. Brion, S.~P. Inamdar, Frobenius splitting of spherical varieties, {\it Algebraic groups and their generalizations: classical methods (University Park, PA, 1991)}, 207--218, Proc. Sympos. Pure Math., {\bf 56}, Part 1, Amer. Math. Soc., Providence, RI, 1994.

\bibitem[BK]{frobenius} Michel Brion, Shrawan Kumar, {\it Frobenius Splitting Methods in Geometry and Representation Theory}, Progress in Mathematics, {\bf 231}, Birkh\"auser Boston, Inc., Boston, MA, 2005.

\bibitem[CG]{carr} Michael Carr, Skip Garibaldi, Geometries, the principle of duality, and algebraic groups, {\it Expo. Math.} {\bf 24} (2006), no.~3, 195--234.

\bibitem[dEP]{dEP} C.~de Concini, David Eisenbud, C.~Procesi, Young diagrams and determinantal varieties, {\it Invent. Math.} {\bf 56} (1980), no.~2, 129--165. 

\bibitem[Dem]{demazure} Michel Demazure, A very simple proof of Bott's theorem, {\it Invent. Math.} {\bf 33} (1976), no.~3, 271--272.

\bibitem[DW]{derksenweyman} Harm Derksen, Jerzy Weyman, Semi-invariants of quivers and saturation for Littlewood-Richardson coefficients, {\it J. Amer. Math. Soc.} {\bf 13} (2000), no.~3, 467--479.

\bibitem[Eis]{eisenbud} David Eisenbud, {\it Commutative Algebra. With a view toward algebraic geometry}, Graduate Texts in Mathematics {\bf 150}, Springer-Verlag, New York, 1995.

\bibitem[Enr]{enright} Thomas~J. Enright, Analogues of Kostant's $\fu$-cohomology formulas for unitary highest weight modules, {\it J. Reine Angew. Math.} {\bf 392} (1988), 27--36.

\bibitem[EHP]{enrightetall} Thomas J. Enright, Markus Hunziker, W. Andrew Pruett, Diagrams of Hermitian type, highest weight modules and syzygies of determinantal varieties.

\bibitem[FH]{fultonharris}  William Fulton, Joe Harris, {\it Representation Theory. A first course}, Graduate Texts in Mathematics {\bf 129}, Readings in Mathematics, Springer-Verlag, New York, 1991.

\bibitem[M2]{M2} Daniel~R. Grayson, Michael~E. Stillman, Macaulay2, a software system for research in algebraic geometry, available at \url{http://www.math.uiuc.edu/Macaulay2/}.

\bibitem[Gro]{grosshans} Frank D. Grosshans, {\it Algebraic homogeneous spaces and invariant theory}, Lecture Notes in Mathematics {\bf 1673}, Springer-Verlag, Berlin, 1997.

\bibitem[HZ]{huang} Jing-Song Huang, Chen-Bo Zhu, Weyl's construction and tensor power decomposition for $G_2$, {\it Proc. Amer. Math. Soc.} {\bf 127}, no.~3 (1999), 925--934.

\bibitem[Hum]{humphreys} James~E. Humphreys, {\it Introduction to Lie Algebras and Representation Theory}, Second printing, revised, Graduate Texts in Mathematics {\bf 9}, Springer-Verlag, New York-Berlin, 1978.

\bibitem[Ina]{inamdar} S.~P. Inamdar, A note on Frobenius splitting of Schubert varieties and linear syzygies, {\it Amer. J. Math.} {\bf 116} (1994), no.~6, 1587--1590.

\bibitem[Koi]{koikespin} Kazuhiko Koike, Spin representations and centralizer algebras for the spinor groups, \arxiv{math/0502397v1}.

\bibitem[KT]{koike} Kazuhiko Koike, Itaru Terada, Young-diagrammatic methods for the representation theory of the classical groups of type $B_n$, $C_n$, $D_n$, {\it J. Algebra} {\bf 107} (1987), no.~2, 466--511.
  
\bibitem[KW]{2kraskiewicz} Witold Kra\'skiewicz, Jerzy Weyman, Geometry of orbit closures for the representations associated to gradings of Lie algebras of types $E_7$, \arxiv{1301.0720v3}.
  
\bibitem[LW]{landsbergweyman} J.~M. Landsberg, Jerzy Weyman, On secant varieties of compact Hermitian symmetric spaces, {\it J. Pure and Appl. Algebra} {\bf 213} (2009), no.~11, 2075--2086, \arxiv{0802.3402v2}.
  
\bibitem[LiE]{lie} Marc A. A. van Leeuwen, Arjeh M. Cohen, Bert Lisser, ``LiE, a package for Lie group computations'' version 2.2.2, \url{http://www-math.univ-poitiers.fr/~maavl/LiE/}.
    
\bibitem[Lit1]{littlewood} D. E. Littlewood, Products and plethysms of characters with orthogonal, symplectic and symmetric groups, {\it Canad. J. Math.}  {\bf 10} (1958), 17--32.

\bibitem[Lit2]{littlewoodbook} Dudley E. Littlewood, {\it The Theory of Group Characters and Matrix Representations of Groups}, Reprint of the second (1950) edition, AMS Chelsea Publishing, Providence, RI, 2006.

\bibitem[Mac]{macdonald} I.~G.~Macdonald, {\it Symmetric Functions and Hall Polynomials}, second edition, Oxford Mathematical Monographs, Oxford, 1995.

\bibitem[Sam]{sam} Steven~V Sam, Symmetric quivers, invariant theory, and saturation theorems for the classical groups, {\it Adv. Math.} {\bf 229} (2012), no.~2, 1104--1135, \arxiv{1009.3040v2}.

\bibitem[SSW]{ssw} Steven~V Sam, Andrew Snowden, Jerzy Weyman, Homology of Littlewood complexes, {\it Selecta Math. (N.S.)} {\bf 19} (2013), no.~3, 655--698, \arxiv{1209.3509v2}.

\bibitem[Sun]{sundaram} Sheila Sundaram, Tableaux in the representation theory of the classical Lie groups, {\it Invariant theory and tableaux (Minneapolis, MN, 1988)}, 191--225, IMA Vol. Math. Appl., {\bf 19}, Springer, New York, 1990.

\bibitem[Wei]{weibel} Charles~A. Weibel, {\it An Introduction to Homological Algebra},  Cambridge Studies in Advanced Mathematics {\bf 38}, Cambridge University Press, Cambridge, 1994.

\bibitem[Wey]{weyman} Jerzy Weyman, {\it Cohomology of Vector Bundles and Syzygies}. Cambridge Tracts in Mathematics {\bf 149}, Cambridge University Press, 2003.

\end{thebibliography}
\end{document}